\providecommand{\U}[1]{\protect\rule{.1in}{.1in}}
\def\theenumi{\arabic{enumi}}
\def\theenumii{\alph{enumii}}
\def\p@enumii{\theenumi.}
\def\theenumiii{\arabic{enumiii}}
\def\p@enumiii{(\theenumi)(\theenumii)}
\def\p@enumiv{\p@enumiii.\theenumiii}
\newcommand{\dmo}{\DeclareMathOperator}
\newcommand{\Z}{\mathbb{Z}}
\newcommand{\N}{\mathbb{N}}
\newcommand{\Q}{\mathbb{Q}}
\newcommand{\Ca}{\mathcal{C}}
\newcommand{\G}{\mathcal{G}}
\newcommand{\as}{\text{*}}
\newcommand{\dt}{\bullet}
\newcommand{\mr}{\mathfrak{m}}
\newcommand{\Sn}{\mathfrak{S}}
\newcommand{\arXiv}[1]{\href{http://arxiv.org/abs/#1}{\nolinkurl{arXiv:#1}}}
\newcommand{\arXivV}[2]{\href{http://arxiv.org/abs/#1}{\nolinkurl{arXiv:#1v#2}}}
\dmo\FI{FI}
\dmo\FB{FB}
\dmo\VI{VI}
\dmo\Mod{-Mod}
\dmo\CaMod{\Ca-Mod}
\dmo\Camod{\Ca-mod}
\dmo\Ind{Ind}
\dmo\Res{Res}
\dmo\Tab{Tab}
\dmo\STab{STab}
\dmo\Hom{Hom}
\dmo\Ext{Ext}
\dmo\End{End}
\dmo\Aut{Aut}
\dmo\So{S}
\dmo\coker{coker}
\dmo\im{im}
\dmo\de{depth}
\dmo\dwidth{\partial width}
\dmo\dec{depth^{class}}
\dmo\reg{reg}
\dmo\dreg{\partial reg}
\newtheorem{theorem}{Theorem}[section]
\newtheorem*{thm}{Theorem}
\newtheorem{thmab}{Theorem}
\newtheorem{corollary}[theorem]{Corollary}
\newtheorem{lemma}[theorem]{Lemma}
\newtheorem{proposition}[theorem]{Proposition}
\theoremstyle{definition}
\newtheorem{definition}[theorem]{Definition}
\newtheorem{remark}[theorem]{Remark}
\renewenvironment{proof}[1][\proofname]{{\bfseries #1\\}}{\qed}
\title{Homological Invariants of $\FI$-Modules and $\FI_G$-Modules}
\date{}
\author{Eric Ramos}
\address{University of Wisconsin - Madison Department of Mathematics, 480 Lincoln Dr., Madison, WI 53706-1325}
\email{eramos@math.wisc.edu}
\thanks{The author was supported by NSF-RTG grant 1502553.}
\begin{document}
\maketitle

\begin{abstract}
We explore a theory of depth for finitely generated $\FI_G$-modules. Using this theory, we prove results about the regularity, and provide novel bounds on stable ranges of $\FI$-modules, making effective a theorem of Nagpal and thereby refining the stable range in results of Church, Ellenberg, and Farb.
\end{abstract}

\section{Introduction}

Let $G$ be a group. The category $\FI_G$, introduced in \cite{SS2}, is that whose objects are finite sets, and whose morphisms are pairs $(f,g):S \rightarrow T$ such that $f:S \rightarrow T$ is an injection, and $g:S \rightarrow G$ is a map of sets. If $G = 1$ is the trivial group, then $\FI_G$ is equivalent to the category $\FI$ of finite sets and injections. The full subcategory of $\FI_G$ whose objects are the sets $[n] := \{1,\ldots,n\}$ is equivalent to $\FI_G$, and we therefore often identify the two. For any commutative ring $k$, an $\FI_G$-module over $k$ is a covariant functor $V:\FI_G \rightarrow \text{Mod}_k$. We will often write $V_n := V([n])$.\\

In the present paper we study various homological invariants of $\FI_G$-modules, and show how they relate to concrete questions about stability. In particular, we generalize the bounds on Castelnuovo-Mumford regularity in \cite[Theorem A]{CE}, and provide explicit bounds on results from \cite[Theorem B]{CEFN} and \cite{NS}. If $V$ is an $\FI_G$-module, then we define $H_0(V)$ on any finite set $[n]$ to be the quotient of $V_n$ by the images of all maps $V(f,g)$ where $(f,g) \in \Hom_{\FI_G}([m],[n])$ and $m < n$. The functor $V \mapsto H_0(V)$ is right exact, and we define its right derived functors, $H_i$, to be the \textbf{homology functors}. The paper \cite{CE} studied these functors in the case of $\FI$-modules, and showed various applications to the homology of congruence subgroups. Subsequently, Calegari and Emerton used the results of Church and Ellenberg in studying the homology of arithmetic groups \cite[Theorem 5.2]{CaE}.\\

After reviewing some preliminary topics, we next turn our attention to bounding the regularity of $\FI_G$-modules. We define the \textbf{degree} of an $\FI_G$-module to be the largest integer $n$ for which $V_n \neq 0$, while $V_r = 0$ whenever $r>n$. We say that $V$ is \textbf{generated in degree $\leq m$} if $\deg(H_0(V)) \leq m$ (See Definition \ref{fg}). Similarly, we say that $V$ has \textbf{first homological degree $\leq r$} if $\deg(H_1(V)) \leq r$ (See Definition \ref{fg}, and Remark \ref{reldegdef} for more on this definition). If a module has finite generating and relation degrees, then it is said to be \textbf{presented in finite degree}. The \textbf{regularity} of $V$ is the smallest integer $N$ such that
\[
\deg(H_i(V))-i \leq N
\]
for all $i \geq 1$ (see Definition \ref{reldeg}). It was proven by Sam and Snowden in \cite[Corollary 6.3.5]{SS3} that finitely generated $\FI$-modules in characteristic zero have finite regularity. Following this, Church and Ellenberg proved that $\FI$-modules which are presented in finite degree have finite regularity over any ring, and they provided a bound on this regularity \cite[Theorem A]{CE}. More recently, Li and Yu have provided different bounds on the regularity of $\FI$-modules \cite[Theorem 1.8]{LY}. One of the goals of this paper is to prove that similar bounds exist for $\FI_G$-modules. Indeed, we will find that the regularity of a module $V$, which is presented in finite degree, can be bounded in terms of the generating degree and the relation degree of $V$. \\

\begin{thmab}\label{finitereg}
Let $V$ be an $\FI_G$-module over a commutative ring $k$ with generating degree $\leq d$ and first homological degree $\leq r$. Then $\reg(V) \leq r + \min\{r,d\} -1$.\\
\end{thmab}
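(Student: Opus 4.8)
The goal is to bound $\deg H_i(V)-i$ uniformly in $i\ge 1$. The plan has two stages: first establish the ``baseline'' bound $\reg(V)\le d+r-1$, the $\FI_G$-analogue of \cite[Theorem A]{CE}, and then sharpen it to $\reg(V)\le 2r-1$ when $d>r$; since $\min\{d+r-1,\,2r-1\}=r+\min\{r,d\}-1$, the two together yield the theorem. For the baseline, I would start from a short exact sequence $0\to K\to F\to V\to 0$ with $F$ a free $\FI_G$-module (a sum of representables $\Hom_{\FI_G}([m],-)$ with $m\le d$) surjecting onto $V$. Such $F$ is projective, hence acyclic for $H_{\ge 1}$ with $H_0$ supported in degree $\le d$, so the long exact sequence in homology gives $H_i(V)\cong H_{i-1}(K)$ for $i\ge 2$, the estimate $\deg H_0(K)\le\max\{r,d\}$, and consequently
\[
\reg(V)\le\max\{\,\deg H_1(V)-1,\ \reg(K)-1\,\}.
\]
One then induces on homological degree; the crucial missing input is an explicit bound on the \emph{relation} degree of the syzygy $K$, equivalently on $\deg H_2(V)=\deg H_1(K)$, which the long exact sequence alone does not control.

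Supplying that input is where the homological machinery of $\FI_G$-modules---and, in the language of this paper, the depth theory---must be used: one computes $H_\bullet$ via the bar (Koszul) complex of an $\FI_G$-module, or equivalently tracks the behaviour of $\de$ and of the local cohomology $H^\bullet_{\mathfrak{m}}$ under the shift functor, in order to show that the syzygy's relation degree again does not exceed $\max\{r,d\}$, so that after a single step the pair (generating degree, relation degree) stabilizes; feeding this back into the recursion yields $\deg H_i(V)\le d+r-1+i$ for all $i\ge 1$, i.e.\ $\reg(V)\le d+r-1$. I expect this to be the main obstacle, and the delicate point is to make the estimate work integrally over an arbitrary commutative ring $k$ rather than only in characteristic zero, where semisimplicity of $k[\Sn_n\wr G]$ makes every finitely generated module a sum of an induced module and a torsion module and trivializes the argument.

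For the improved bound when $d>r$, the idea is that the ``top'' generators, in degrees in $(r,d]$, can only contribute freely. Over a field this is exact: a minimal free cover $F\twoheadrightarrow V$ decomposes as $F=F'\oplus F''$ with $F'$ built from the part of $H_0(V)$ in degrees $\le r$ and $F''$ from the part in degrees in $(r,d]$, so $F''_n=0$ for $n\le r$; since $V$ has relation degree $\le r$ the syzygy is generated in degrees $\le r$ and hence lies in $F'$, which forces a splitting $V\cong\overline V\oplus F''$ with $F''$ free and $\overline V$ generated \emph{and} related in degrees $\le r$. Then $\reg(V)=\reg(\overline V)\le r+r-1=2r-1$ by the baseline bound applied to $\overline V$. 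Over a general $k$ the same conclusion is reached by replacing the literal minimal cover with the paper's machinery (equivalently, by a refinement of the baseline argument that substitutes $r$ for $d$ in the over-generated regime). Combined with the case $d\le r$, where the baseline bound already gives $\reg(V)\le d+r-1=r+\min\{r,d\}-1$, this completes the proof. Finally, the group $G$ plays no essential role: the free $\FI_G$-modules are the representables $\Hom_{\FI_G}([m],-)$, and by the $\FI_G$ branching rule their shifts again decompose as finite direct sums of free $\FI_G$-modules, so every homological step of the $\FI$ argument transports verbatim with $\Sn_n$ replaced by the wreath product $\Sn_n\wr G$.
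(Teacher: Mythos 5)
You correctly set up the reduction (get a baseline bound, then split off the free over-generated part when $d>r$ to replace $d$ by $\min\{r,d\}$, as in Remark \ref{reldegdef}), and you correctly identify the bottleneck: the long exact sequence from $0\to K\to F\to V\to 0$ gives $H_i(V)\cong H_{i-1}(K)$ for $i\ge 2$ but says nothing about $\deg H_1(K)=\deg H_2(V)$. However, the step that is supposed to close this gap is left as a gesture toward ``the bar (Koszul) complex,'' ``local cohomology,'' or ``depth theory,'' and the one concrete claim you make there --- that the syzygy's relation degree ``again does not exceed $\max\{r,d\}$, so that after a single step the pair (generating degree, relation degree) stabilizes'' --- is false. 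If it were true, iterating would give $\deg H_i(V)\le\max\{r,d\}$ for all $i$, which is inconsistent with $\deg H_i(V)$ growing like $\reg(V)+i$; even the theorem's own conclusion gives $\deg H_2(V)\le r+\min\{r,d\}+1$, which exceeds $\max\{r,d\}$ whenever $d<2r+1$. So the inductive control you need is not a stabilization of the relation degree but a linear bound $\deg H_0(X_i)\le N+i$ for the full syzygy tower, with the correct constant $N$ coming from somewhere else.

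That ``somewhere else'' is precisely what the paper supplies and what your proposal never identifies. The engine is the derivative functor $D=\coker(V\to \So V)$, its left derived functors $H_i^{D^a}$, and the associated \emph{derived width} $\dwidth(V)=\sup_a\{\deg H_1^{D^a}(V)+a\}$. The paper first bounds $\dwidth(V)\le r+\min\{r,d\}$ by generalizing the explicit Church--Ellenberg combinatorics --- the ideals $I_m\subset \Z[G_n]$ generated by products $J_{i_1}^{j_1}\cdots J_{i_m}^{j_m}$, the sets $\Sigma(a,b)\subset\binom{[2b]}{b}$, and Lemma \ref{combi} --- to $\FI_G$ (Theorem \ref{boundwidth}). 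It then converts this to the regularity bound via the two-column Grothendieck spectral sequence for $D^{a+1}=D\circ D^a$ (giving $\deg H_i^{D^a}(V)\le\dwidth(V)-1+i-a$, Proposition \ref{regbound}) and an induction on the syzygy tower in which one applies $D^{N+i+1}$ to kill the projective cover $M_i$ and read off the generating degree of $X_{i+1}$ from the derivative (Proposition \ref{homreg}). None of this is a cosmetic reformulation of the Koszul resolution or local cohomology: the combinatorial bound on $\dwidth$ is the actual input, and without it (or an equivalent) the induction has no base constant $N$. So the proposal does not constitute a proof; the central quantitative lemma is missing.
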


Note that the above bound exactly agrees with the bound given by Church and Ellenberg for $\FI$-modules in \cite[Theorem A]{CE}. Indeed, much of the work done in proving the above theorem will involve generalizing the techniques used in that paper.\\

The main tool in the first part of the paper is the use of the derivative functor. Given an $\FI_G$-module, we define its \textbf{first shift} $\So V$ to be the module defined on points by $\So V_n = V_{n+1}$. For any $(f,g):[n] \rightarrow [m]$, the map $\So V(f,g):V_{n+1} \rightarrow V_{m+1}$ will be the map $V(f_+,g_+)$, where $f_+$ agrees with $f$ on $[n]$ and maps $n+1$ to $r+1$, while $g_+$ agrees with $G$ on $[n]$ and maps $n+1$ to the identity (see Definition \ref{shiftfunctor}). We will write $\So_b$ to denote the $b$-th iterate of $\So$. This functor was first introduced in \cite[Definition 2.8]{CEFN}, and has since seen use in many papers (e.g. \cite{N}, \cite{NS}, \cite{L}, \cite{GL}). If $V$ is any $\FI_G$-module, then the map induced by the natural inclusion $f^n:[n] \rightarrow [n+1]$ (i.e. that which sends $i$ to $i$ for all $i$), paired with the trivial map into $G$, induces a map of $\FI_G$-modules $V \rightarrow \So V$. The \textbf{derivative} of $V$, denoted $DV$, is defined to be the cokernel of this map (see Definition \ref{dervdef}). As with the shift functor, we set $D^a$ to be the $a$-th iterate of $D$. Because this functor is right exact, we can consider its left derived functors, which we denote $H_i^{D^a}$.\\

The connection between the derivative functor and homology was established for $\FI$-modules in \cite{CE}. Church and Ellenberg show that many properties of the derivative is encoded in the combinatorics of $\FI$, which they then compute and relate to the regularity. One of the main objectives of the latter part of this paper is to argue that, in fact, the homological properties of the derivative functor provide deeper insights than were previously noted.\\

If $V$ is an $\FI_G$-module, then we set its \textbf{depth} to be the smallest non-negative value $a$ for which $H_1^{D^{a+1}}(V) \neq 0$ . While this definition does not seem related to more classical notions of depth, we will find that it satisfies many desirable properties. This is explored deeply in Section \ref{depthclass}. On the other hand, we define the \textbf{derived regularity} of $V$, denoted $\dreg(V)$, to be the maximum across all integers $a$ of the degree of $H_1^{D^a}(V)$. Using the properties of depth discussed in Section \ref{depthclass}, we will be able to prove the following theorem.\\

\begin{thmab}\label{homacyclic}
Let $V$ be a $\FI_G$-module which is generated in finite degree over a commutative ring $k$. Then the following are equivalent:
\begin{enumerate}
\item $V$ admits a filtration $0 = V^{(0)} \subseteq V^{(1)} \subseteq \ldots \subseteq V^{(n)} = V$, such that the cofactors are relatively projective (see Definition \ref{relproj});
\item There is a series of surjections $Q^{(n)} = V \twoheadrightarrow Q^{(n-1)} \twoheadrightarrow \ldots \twoheadrightarrow Q^{(0)} = 0$ whose successive kernels are relatively projective;
\item $V$ is homology acyclic;
\item $H_1(V) = 0$;
\item $V$ admits a finite resolution by homology acyclic objects which are generated in finite degree.\\
\end{enumerate}
If, in addition, $V$ is presented in finite degree, then the condition $H_i(V) = 0$ for some $i > 0$ is also equivalent to the above.
\end{thmab}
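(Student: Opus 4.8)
The plan is to prove the cycle of equivalences $(1)\Rightarrow(2)\Rightarrow(3)\Rightarrow(4)\Rightarrow(1)$ and then handle $(3)\Leftrightarrow(5)$ and the final addendum separately. The conceptual engine is the relationship between homology acyclicity, the derivative functor, and relative projectivity; I would expect to have already established in earlier sections that relatively projective modules are homology acyclic, that $H_0$ is right exact with derived functors $H_i$, and crucially some form of long exact sequence in homology attached to a short exact sequence of $\FI_G$-modules. The implication $(1)\Rightarrow(3)$ should then be a straightforward dévissage: if the cofactors $V^{(i)}/V^{(i-1)}$ are relatively projective, hence homology acyclic, then inducting up the filtration using the long exact sequence in $H_\bullet$ forces $H_i(V)=0$ for all $i\geq 1$. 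The implication $(2)\Rightarrow(3)$ is the same argument read in the other direction. The implication $(3)\Rightarrow(4)$ is trivial since homology acyclicity means $H_i(V)=0$ for all $i\geq 1$, in particular $H_1(V)=0$.

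**The main obstacle: $(4)\Rightarrow(1)$.** The hard part will be showing that the vanishing of $H_1(V)$ alone, for $V$ generated in finite degree, already produces the relatively-projective filtration. The strategy I would use is induction on the generating degree $d$ of $V$. The base case $d=0$ should be essentially tautological — a module supported in degree $0$ and generated there is relatively projective. For the inductive step, I would pick a surjection $P \twoheadrightarrow H_0(V)$ from a relatively projective generated in degree $\leq d$ (for instance the relatively projective $\FI_G$-module associated to $(H_0 V)_d$ in top degree, or an appropriate summand thereof), lift it to a map $P \to V$, and argue this map is injective with cokernel generated in strictly smaller degree — this is where $H_1(V)=0$ must enter, guaranteeing that the relations are "absorbed" so that no kernel appears. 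Concretely, one forms $0 \to K \to P \to V \to 0$; since $P$ and $V$ both have $H_1$ controlled ($H_1(P)=0$ as $P$ is relatively projective, $H_1(V)=0$ by hypothesis), the long exact sequence gives $H_0(K) \hookrightarrow H_0(P) \to H_0(V) \to 0$ together with vanishing forcing $H_0(K)=0$, hence $K=0$. Then $V \cong P$, or more precisely one peels off $P$ as the bottom of the filtration. Some care is needed to make the induction actually decrease a well-chosen invariant; a cleaner route may be to induct by first splitting off a relatively projective so that $V/P$ is generated in lower degree, apply the inductive hypothesis to $V/P$, and pull the filtration back.

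**The remaining equivalences.** For $(1)\Rightarrow(2)$ and $(2)\Rightarrow(1)$ I would observe these are formally dual packagings of the same data: a filtration with relatively projective cofactors can be re-indexed as a tower of surjections with relatively projective kernels and vice versa, using that extensions and quotients interact well with the filtration; no new input beyond bookkeeping is required. For $(3)\Rightarrow(5)$ one takes the trivial resolution $V \xrightarrow{\sim} V$ (a length-zero resolution by the homology-acyclic object $V$ itself), and for $(5)\Rightarrow(3)$ one runs a hyperhomology / spectral sequence argument: given a finite resolution $0 \to V \to A^0 \to A^1 \to \cdots \to A^k \to 0$ by homology acyclics generated in finite degree, the hypercohomology spectral sequence computing $H_\bullet$ of the complex degenerates (all the $A^j$ being acyclic), yielding $H_i(V)=0$ for $i\geq 1$. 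Finally, for the addendum, I would invoke the derivative functor and the earlier-developed theory of depth: if $V$ is presented in finite degree and $H_i(V)=0$ for a single $i>0$, then one uses the shift/derivative machinery — specifically that $H_\bullet$ can be computed via the derived functors $H_\bullet^{D^a}$ and that the vanishing of a single higher homology group propagates downward through the long exact sequences relating $V$, $\So V$, and $DV$ — to conclude $H_1(V)=0$, closing the loop; I expect this last step to require the finite presentation hypothesis precisely because it is needed to keep the relevant degrees bounded as one applies iterated derivatives.
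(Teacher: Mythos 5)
Your reductions $(1)\Leftrightarrow(2)$ (pure bookkeeping via kernels of the composite surjections) and $(1)\Rightarrow(3)\Rightarrow(4)$ are fine, but the two load-bearing steps both have genuine gaps. For $(4)\Rightarrow(1)$, your concrete argument forms $0\to K\to P\to V\to 0$ and concludes $H_0(K)=0$ from the long exact sequence; this requires a relatively projective $P$ with a map $P\to V$ that is simultaneously surjective and an isomorphism on $H_0$. By the adjunction $\Hom(M(W),V)=\Hom_{G_n}(W,V_n)$, producing such a map amounts to choosing $G_n$-equivariant sections of the quotients $V_n\to H_0(V)_n$, which do not exist in general --- and if your argument ran, it would prove that every $V$ with $H_1(V)=0$ is relatively projective, contradicting the known examples (in positive characteristic) of $\sharp$-filtered modules that are not relatively projective. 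The only canonical map available is $M(V_i)\to V$ from the \emph{lowest} nonvanishing degree $i$ (where $H_0(V)_i=V_i$); that map is neither injective nor surjective a priori, and the actual work of the proof is showing its kernel vanishes, which the paper does by first running the peeling process to the top (where the last quotient is visibly relatively projective) and then propagating $H_2(Q^{(j)})=0\Rightarrow H_1(I^{(j+1)})=0$ back down. Your sketch skips exactly this.

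The second gap is $(5)\Rightarrow(3)$ and the addendum. Your spectral-sequence claim is backwards: an $H_0$-acyclic \emph{left} resolution $A_\bullet\to V$ does not force $H_i(V)=0$; it computes $H_i(V)$ as the $i$-th homology of the complex $H_0(A_\bullet)$, which is the whole point of acyclic resolutions (a formal degeneration argument only works for a right coresolution, which is not what the addendum produces). The implication $(5)\Rightarrow(3)$ is a genuinely non-formal fact --- it fails for modules over the poset $\N$, as the paper notes --- and the proof requires the two-out-of-three property for $\sharp$-filtered modules in a short exact sequence (Corollary \ref{needtwo}), whose hard case rests on the Depth Classification Theorem and hence on the derivative/shift machinery and Theorem \ref{shiftreg}. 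Likewise, for the addendum there is no mechanism by which the vanishing of a single $H_i(V)$ "propagates downward"; the correct route is to truncate a free resolution so that the $i$-th syzygy $X_{i-1}$ satisfies $H_1(X_{i-1})=H_i(V)=0$, conclude $X_{i-1}$ is $\sharp$-filtered by the already-established $(4)\Rightarrow(1)$, and then apply the two-out-of-three corollary repeatedly to descend the finite resolution back to $V$. Without identifying that corollary (and the depth theory behind it) as the essential input, both $(5)\Rightarrow(3)$ and the addendum remain unproved.
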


\begin{remark}
A very recent preprint of Li and Yu \cite{LY} has overlap with this paper. Namely, they prove a weaker version of Theorem \ref{homacyclic} as their Theorem 1.3. While earlier versions of this work were otherwise independent, we use arguments inspired by \cite[Section 3]{LY} in Section \ref{depthclass} to generalize the results of these previous versions.\\
\end{remark}

Relatively projective $\FI_G$-modules will be defined and expanded upon in later sections. For now, one can imagine these objects as being projective in the traditional sense. In fact, if $k$ is a field of characteristic zero, then projective and relatively projective will exactly coincide. Over a more general ring relatively projective $\FI_G$-modules need not be projective. However, they turn out to be acyclic with respect to many natural functors on the category of $\FI_G$-modules. If $k$ is a field of characteristic zero, then the above theorem implies that every non-projective object in the category of finitely generated $\FI_G$-modules has infinite projective dimension. This fact was proven by Sam and Snowden in the case of $\FI$-modules in \cite[Section 0.1]{SS3}. We say that an $\FI_G$ module $V$ is \textbf{$\sharp$-filtered} whenever it satisfies any of the conditions in the above theorem.\\

As a second application of depth, we will make effective, and generalize the following theorem of Nagpal and Snowden.\\

\begin{thm}[\cite{NS}]
Assume that $G$ is a polycyclic-by-finite group, and let $V$ be a finitely generated (see Definition \ref{fg}) $\FI_G$-module over a Noetherian ring $k$. Then for $b \gg 0$, $\So_b V$ is $\sharp$-filtered.\\
\end{thm}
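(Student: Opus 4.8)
The plan is to proceed by Noetherian induction on $V$, using the structure theory of finitely generated $\FI_G$-modules together with the depth/derivative machinery developed earlier. The key point is that $\sharp$-filtered is exactly equivalent to $H_1(V) = 0$ (and more generally homology acyclicity) by Theorem~\ref{homacyclic}, so it suffices to show that for $b \gg 0$ we have $H_1(\So_b V) = 0$. Since $\So$ is exact, there is no difficulty in commuting $\So_b$ past short exact sequences, and the homology functors $H_i$ behave well under $\So$; the real content is that shifting kills the "low-degree torsion" obstructions.

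First I would reduce to the torsion-free case. Recall that an $\FI_G$-module $V$ has a maximal finite-length submodule $V_{\mathrm{tors}}$ (its "torsion part", killed by a sufficiently high shift), and the quotient $V/V_{\mathrm{tors}}$ injects into a finite direct sum of "free" (relatively projective, induced) $\FI_G$-modules — this is the $\FI_G$ analogue of Nagpal's structure theorem / the theorem that finitely generated $\FI_G$-modules over Noetherian $k$ are Noetherian, which requires $G$ polycyclic-by-finite. From the short exact sequence $0 \to V_{\mathrm{tors}} \to V \to \bar V \to 0$, applying $\So_b$ for $b$ larger than $\deg(V_{\mathrm{tors}})$ makes $\So_b V_{\mathrm{tors}} = 0$, hence $\So_b V \cong \So_b \bar V$; so we may assume $V$ is torsion-free, i.e. embeds in a relatively projective module $P$.

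Next, given $0 \to V \to P \to W \to 0$ with $P$ relatively projective (hence $\sharp$-filtered) and $W$ finitely generated, I would run the long exact sequence in homology: $H_2(\So_b P) \to H_2(\So_b W) \to H_1(\So_b V) \to H_1(\So_b P)$. Since $\So_b P$ is again relatively projective, it is homology acyclic, so $H_1(\So_b P) = H_2(\So_b P) = 0$, giving $H_1(\So_b V) \cong H_2(\So_b W)$. Thus it suffices to prove a slightly stronger statement — that for every finitely generated $\FI_G$-module $W$ and every $i \geq 1$, $H_i(\So_b W) = 0$ for $b \gg 0$ — which I would prove simultaneously by induction, noting $\deg H_i(\So_b W) = 0$ forces vanishing once $b$ exceeds the relevant invariant. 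Alternatively, and more cleanly, I would translate everything into the derivative functor: since $D$ and $\So$ fit into the short exact sequence $0 \to V \to \So V \to DV \to 0$, and since "depth" is defined via vanishing of $H_1^{D^{a+1}}$, the statement that $\So_b V$ is $\sharp$-filtered for $b \gg 0$ should follow from showing that $\reg(V)$ and $\dreg(V)$ are finite for finitely generated $V$ (which in turn uses Theorem~\ref{finitereg} once we know $V$ is presented in finite degree — guaranteed by Noetherianity) and then bounding, in terms of $\reg(V)$ and $\dreg(V)$, how large $b$ must be so that $D^{a+1} \So_b V$ has no first homology in any degree.

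The main obstacle is the first step: establishing that finitely generated $\FI_G$-modules over a Noetherian $k$ embed, modulo finite-length torsion, into relatively projective modules — equivalently, the Noetherianity and structure theory of the category $\CaMod$ when $G$ is polycyclic-by-finite. This is where the hypothesis on $G$ is used in an essential way (it fails for, e.g., infinitely generated $G$), and it is the input that lets the induction terminate. Once that structural fact is in hand, the rest is a formal dévissage using exactness of $\So$, the long exact sequence in $H_i$, Theorem~\ref{homacyclic}'s characterization of $\sharp$-filtered, and the finiteness of regularity from Theorem~\ref{finitereg}; the bookkeeping needed to extract an explicit value of $b$ from $\reg(V)$ and $\dreg(V)$ is the "making effective" part and should be routine given the degree estimates already assembled.
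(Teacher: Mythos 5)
Your second sketch---reduce to the derivative functors, use finiteness of the derived regularity and the commutation of $H_i^{D^a}$ with $\So_b$---is essentially the argument the paper actually runs. The paper itself only cites this statement from \cite{NS}, but Theorem~\ref{shiftreg} supersedes it: Lemma~\ref{shiftderv} gives $\So_b H_1^{D^a}(V) \cong H_1^{D^a}(\So_b V)$; the depth classification theorem identifies $\sharp$-filtered with ``$H_1^{D^a}=0$ for all $a$''; Theorem~\ref{boundwidth} shows $\dreg(V)=\sup_a \deg H_1^{D^a}(V)$ is finite once $V$ is presented in finite degree; and the Noetherian property (this is where $k$ Noetherian and $G$ polycyclic-by-finite enter) guarantees finite presentation for finitely generated $V$. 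Taking $b>\dreg(V)$ then kills $H_1^{D^a}(\So_b V)$ for every $a$, so $\So_b V$ is $\sharp$-filtered.

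Your first sketch has two genuine gaps. First, the assertion that $V/V_{\mathrm{tors}}$ injects into a finite direct sum of relatively projective modules is not an available input; over a general Noetherian $k$ this is essentially (a strengthening of) the statement being proved. The only embedding of a torsion-free module that is freely available is $V\hookrightarrow\So_b V$, and knowing the target is $\sharp$-filtered is exactly the theorem---indeed the paper's remark following Theorem~\ref{polystab} derives ``torsion-free embeds into $\sharp$-filtered'' \emph{from} the Nagpal--Snowden theorem, not the other way around. Second, the d\'evissage via $H_i(\So_b W)$ would need control of $\deg H_i(\So_b W)$ as $b$ grows, but the ordinary homology functors $H_i$ do \emph{not} commute with $\So_b$, unlike the functors $H_i^{D^a}$; this is precisely why Lemma~\ref{shiftderv} is stated for the latter. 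So ``$b$ exceeds the relevant invariant forces $H_i(\So_b W)=0$'' is unsupported; in fact $H_1(\So_b W)=0$ for $b\gg0$ is again the conclusion, not a tool. The derivative-functor route avoids both obstacles, which is why the paper takes it.
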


We call the smallest $b \geq 0$ such that $\So_b V$ is $\sharp$-filtered the \textbf{Nagpal number} of $V$, $N(V)$. Nagpal proved the above theorem in the case where $G$ is finite in \cite[Theorem A]{N}.\\

\begin{thmab}\label{shiftreg}
Let $V$ be an $\FI_G$-module which is presented in finite degree over a commutative ring $k$. Then $\So_b V$ is $\sharp$-filtered for $b \gg 0$. Moreover, in this case $\So_b V$ is $\sharp$-filtered if and only if $b >\dreg(V)$. In particular, if $V$ is not $\sharp$-filtered then $N(V) = \dreg(V) + 1$.\\
\end{thmab}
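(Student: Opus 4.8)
The plan is to understand how the homology functors $H_1^{D^a}$ behave under the shift functor $\So$, and to translate "$\So_b V$ is $\sharp$-filtered" into a vanishing statement about $H_1^{D^{a}}(\So_b V)$, using Theorem~\ref{homacyclic}. The key computational input will be a commutation relation between $D$ and $\So$ — roughly, $D\So \cong \So D$, possibly up to a split piece — which should already be available (or immediately deducible) from the combinatorics of $\FI_G$ discussed in the earlier sections. From such a relation one gets, on the level of derived functors, an isomorphism $H_1^{D^a}(\So_b V) \cong \So_b\, H_1^{D^a}(V)$ for all $a,b \geq 0$, once one knows that $\So$ is exact (it is, being defined pointwise) and sends relatively projective modules to relatively projective modules, so that it can be slid past the bar/Koszul-type resolution computing $H_1^{D^a}$. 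Granting this, the degree of $H_1^{D^a}(\So_b V)$ is exactly $\deg\big(H_1^{D^a}(V)\big) - b$ (with the convention that a module of negative degree is zero), since shifting by one lowers the degree by exactly one as long as the module is nonzero.

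Next I would assemble the equivalence. By Theorem~\ref{homacyclic}, an $\FI_G$-module generated in finite degree is $\sharp$-filtered if and only if $H_1$ of it vanishes; and the depth of $\So_b V$ being $\infty$ — equivalently $H_1^{D^{a}}(\So_b V) = 0$ for \emph{all} $a \geq 1$ — is the statement that $\So_b V$ is $\sharp$-filtered, once one has shown (this is part of the content of Section~\ref{depthclass}, which I am allowed to assume) that depth $=\infty$ is equivalent to being $\sharp$-filtered for modules presented in finite degree. Combining with the previous paragraph, $\So_b V$ is $\sharp$-filtered $\iff H_1^{D^a}(\So_b V) = 0$ for all $a \geq 0$ $\iff \deg\big(H_1^{D^a}(V)\big) < b$ for all $a \geq 0$ $\iff b > \max_a \deg\big(H_1^{D^a}(V)\big) = \dreg(V)$. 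One must also check that $\So_b V$ is still generated in finite degree (clear, since $\deg H_0(\So_b V) \leq \deg H_0(V)$) and presented in finite degree (this needs that $H_1(\So_b V)$ has finite degree, which follows from the $\dreg$ bound being finite — itself a consequence of $V$ being presented in finite degree, via the regularity-type bounds of the earlier sections). The statement that $\So_b V$ is $\sharp$-filtered for $b \gg 0$ is then immediate from $\dreg(V) < \infty$, and the final sentence, $N(V) = \dreg(V) + 1$ when $V$ is not already $\sharp$-filtered, is just a restatement: $N(V)$ is the least $b$ with $\So_b V$ $\sharp$-filtered, i.e. the least $b > \dreg(V)$, i.e. $\dreg(V)+1$, provided $\dreg(V) \geq 0$, which holds precisely when $V$ is not $\sharp$-filtered.

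The main obstacle I anticipate is the first step: pinning down the precise relationship between $\So$ and the derived functors $H_i^{D^a}$, and in particular verifying that $\So$ preserves relative projectivity (so that it commutes with the derived functors of $D$ in the clean way described) and that the degree drops by exactly one rather than merely by at least one. The "exactly one" point is where one needs to know that shifting cannot accidentally kill the top graded piece of $H_1^{D^a}(V)$ — this should follow from an explicit description of the top-degree part, or from the fact that $\So$ is faithful and $\So_1 M = 0$ forces $M = 0$ together with a degree count, but it will require care. A secondary, more routine obstacle is bookkeeping the finiteness hypotheses (generated vs. presented in finite degree) as they propagate through $\So$ and $D$, so that Theorem~\ref{homacyclic} and the Section~\ref{depthclass} characterization of infinite depth are legitimately applicable to $\So_b V$; and one should double-check the edge case $\dreg(V) = -\infty$, i.e. $H_1^{D^a}(V) = 0$ for all $a$, which is exactly the case $V$ is itself $\sharp$-filtered and is correctly excluded from the last sentence of the theorem.
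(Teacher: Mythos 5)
Your proposal is correct and matches the paper's own proof: the paper uses Lemma~\ref{shiftderv} for the commutation $\So_b H_1^{D^a}(V) \cong H_1^{D^a}(\So_b V)$, the depth classification theorem to equate ``$\So_b V$ is $\sharp$-filtered'' with the vanishing of $H_1^{D^a}(\So_b V)$ for all $a \geq 1$, and then the observation that $\So_b H_1^{D^a}(V) = 0$ if and only if $\deg\bigl(H_1^{D^a}(V)\bigr) < b$. The obstacle you anticipate about the degree dropping by exactly $b$ is a non-issue: $\So_b$ is defined on points by $(\So_b W)_n = W_{n+b}$, so $\So_b W = 0$ if and only if $W_n = 0$ for all $n \geq b$, i.e.\ $\deg W < b$; there is no cancellation to worry about.
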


We will see in Section \ref{derv} that the derived regularity of $V$ is related to the regularity of $V$. Theorem \ref{shiftreg} therefore provides a bridge between the phenomenon of eventual $\sharp$-filtration of a module and its regularity. Namely, if $V$ has first homological degree $\leq r$, then
\[
\reg(V) \leq N(V)+ r -2 \text{ (see Proposition \ref{homreg}.)}
\]
As an application of our results, we prove a theorem about bounding stable ranges of $\FI_G$-modules whenever $G$ is finite.\\

If $G$ is a finite group, and $V$ is a finitely generated $\FI_G$-module over a field $k$, we define its \textbf{Hilbert function} to be $H_V(n) = \dim_k(V_n)$. In \cite[Theorem 3.3.4]{CEF}, Church, Ellenberg, and Farb prove that if $k$ is a field of characteristic 0, and $V$ is a finitely generated $\FI$-module, then there is a polynomial $P_V(x) \in \Q[x]$ such that $H_V(n) = P_V(n)$ for all $n \gg 0$. They go on to show that this equality holds for $n \geq r+d$, where $r$ is the first homological degree of $V$, and $d$ is the degree of $H_0(V)$. This was was also proven by Sam and Snowden in \cite{SS3}, although their bound is stated in terms of a kind of local cohomology theory \cite[Theorem 5.1.3 and Remark 7.4.6]{SS3}. Later, Church, Ellenberg, Farb, and Nagpal \cite[Theorem B]{CEFN} prove that if $k$ is any field, then $H_V(n)$ agrees with a polynomial for $n \gg 0$. In this case, the authors do not provide bounds on when this stabilization occurs. These same theorems were later proven by Wilson in the case where $G = \Z/2\Z$ \cite[Theorems 4.16 and 4.20]{JW}. Later, Sam and Snowden proved that the Hilbert function is eventually polynomial for an arbitrary finite group, although they did not provide bounds on when the equality begins \cite[Theorem 10.1.2]{SS}.\\

The question of how big $n$ has to be before this stability begins is known as the stable range problem. We say that a finitely generated $\FI_G$-module $V$ over a field $k$ has \textbf{stable range $\geq m$} if there is a polynomial $P_V(x) \in \Q[x]$ such that for any $n \geq m$, $H_V(n) = P_V(n)$.\\

\begin{thmab}\label{fistab}
Let $G$ be a finite group, and let $V$ be a finitely generated $\FI_G$-module over a field $k$. Then the stable range of $V$ is at least $r + \min\{r,d\}$ where $r$ is the first homological degree of $V$, and $d$ is the generating degree.\\
\end{thmab}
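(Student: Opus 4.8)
The plan is to deduce Theorem \ref{fistab} from the regularity and derived-regularity machinery developed in the earlier sections, by reducing the Hilbert-function question to the vanishing of homology above a certain range. First I would pass from $\FI_G$-modules to the underlying $\FI$-module structure: when $G$ is finite, a finitely generated $\FI_G$-module $V$ over a field $k$ is, after restriction, a finitely generated $\FI$-module, and $\dim_k V_n$ is unchanged. Moreover the generating degree $d$ and first homological degree $r$ can only change in a controlled way (in fact the relevant invariants for the restricted module are bounded by those of $V$), so it suffices to prove the statement for $\FI$-modules, where $d = \deg H_0(V)$ and $r = \deg H_1(V)$. Alternatively — and this is probably cleaner — one keeps the $\FI_G$-structure throughout and uses that all the homological invariants ($\reg$, $\dreg$, $N(V)$, the shift functor $\So$) were already set up for $\FI_G$ in the body of the paper.

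The key step is the following: the Hilbert function $H_V(n)$ agrees with a polynomial for $n \geq m$ precisely when, in that range, $V$ ``looks like'' a $\sharp$-filtered module, because $\sharp$-filtered modules have Hilbert polynomials exactly on the nose (a relatively projective $\FI_G$-module $M(W)$ has $\dim_k M(W)_n$ given by a binomial-coefficient polynomial for all $n \geq \deg W$, and Hilbert functions are additive in short exact sequences). So I would argue: by Theorem \ref{shiftreg}, $\So_b V$ is $\sharp$-filtered for $b > \dreg(V)$, hence $H_{\So_b V}(n) = H_V(n+b)$ is polynomial for \emph{all} $n$ in the relevant range, which pushes the polynomiality of $H_V$ down to $n \geq b_0 := N(V)$ (being careful about the error terms coming from the bottom of the $\sharp$-filtration of $\So_b V$). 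Then I invoke the bridge inequality $\reg(V) \le N(V) + r - 2$ from Proposition \ref{homreg} together with Theorem \ref{finitereg}, $\reg(V) \le r + \min\{r,d\} - 1$, to get $N(V) \le \reg(V) - r + 2 \le \min\{r,d\} + 1$... — here I need to be careful, since the inequality goes the wrong way to bound $N(V)$ by $\reg(V)$. The correct route is: use $\dreg(V) \le \reg(V)$ (or the precise relation established in Section \ref{derv}) to get $N(V) = \dreg(V) + 1 \le \reg(V) + 1 \le r + \min\{r,d\}$, and then one shows the Hilbert function is already polynomial from degree $N(V) + (\text{bottom of filtration})$, which after bookkeeping lands at $r + \min\{r,d\}$.

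More precisely, the cleanest argument I would write runs through the two-step exact sequence attached to the derivative. There is a complex relating $V$, $\So V$, $DV$ and the homology $H_i(V)$; iterating, one gets that $H_V(n)$ is determined by $H_V$ of smaller modules plus contributions of $H_0$ and $H_1$, and the ranges in which these contributions vanish are governed exactly by $d$ and $r$ together with the regularity bound. Concretely: induct on $\deg H_1(V)$ (or on $d$), using the short exact sequence $0 \to K \to V \to V/(\text{torsion-ish part}) \to 0$ whose outer terms have strictly smaller invariants, and for the base case use that a module with $H_1(V) = 0$ is $\sharp$-filtered (Theorem \ref{homacyclic}) and hence has Hilbert function polynomial for $n \ge d$. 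The induction step costs one in the stable range each time, and tallying the cost against $\min\{r,d\}$ and $r$ yields the bound $r + \min\{r,d\}$.

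The main obstacle I anticipate is the bookkeeping at the \emph{bottom} of the $\sharp$-filtration: knowing $\So_b V$ is $\sharp$-filtered tells us $H_{V}(n+b)$ is a binomial polynomial for $n$ beyond the generating degrees of the cofactors, and translating that back into a sharp statement about $H_V(n)$ for small $n$ — without losing the constant and landing exactly at $r + \min\{r,d\}$ rather than something weaker — requires carefully tracking how $\deg H_0$ and $\deg H_1$ of the cofactors sit relative to $r$ and $d$. A secondary subtlety is confirming that passing between $\FI_G$ and $\FI$ (when one chooses that route) does not inflate $r$ or $d$; this should follow from the fact that induction/restriction along $1 \hookrightarrow G$ is exact and compatible with $H_0, H_1$, but it needs to be stated. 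Once these two points are handled, the theorem is a formal consequence of Theorems \ref{finitereg}, \ref{homacyclic}, \ref{shiftreg} and Proposition \ref{homreg}.
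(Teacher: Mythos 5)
Your overall route is the paper's route — shift $V$ until it becomes $\sharp$-filtered, then use that $\sharp$-filtered modules have polynomial Hilbert functions — but two of your intermediate steps are off, one of them in a way that would leave the bookkeeping hanging.

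First, the ``bottom of the filtration'' worry is not a real obstacle, and it comes from a misstatement you make earlier: you write that $\dim_k M(W)_n$ agrees with the binomial polynomial ``for all $n \geq \deg W$.'' In fact $\dim_k M(W)_n = \binom{n}{m}\dim_k W$ holds for \emph{all} $n \geq 0$, because $\binom{n}{m} = 0$ whenever $0 \leq n < m$, which matches $M(W)_n = 0$ in those degrees. Since Hilbert functions are additive across the short exact sequences in a $\sharp$-filtration, a $\sharp$-filtered module has Hilbert function equal to a polynomial for \emph{every} $n \geq 0$, with no cutoff depending on the generating degrees of the cofactors (this is exactly the discussion following Definition \ref{sfildef}). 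Consequently, once you know $\So_b V$ is $\sharp$-filtered, you get $H_V(n)$ polynomial for all $n \geq b$ immediately, and the whole paragraph of bookkeeping you anticipate evaporates.

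Second, the inequality $\dreg(V) \leq \reg(V)$ that you invoke as the ``correct route'' is not established in the paper, and I don't see that it follows from the results you cite. The bound you actually need is $\dreg(V) < \dwidth(V) \leq hd_1(V) + \min\{hd_1(V), d\}$, where the first inequality is Proposition \ref{finwidth} and the second is Corollary \ref{betterboundwidth}; combined with Theorem \ref{shiftreg} (which says $\So_b V$ is $\sharp$-filtered precisely for $b > \dreg(V)$), this is packaged as Corollary \ref{filbound}: $\So_b V$ is $\sharp$-filtered whenever $b \geq r + \min\{r,d\}$. You do not pass through $\reg(V)$ at all, which also means the detour through Proposition \ref{homreg} and your (correctly noted) dead end with $\reg(V) \leq N(V) + r - 2$ can both be dropped. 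With the two corrections above, the argument collapses to the paper's: take $b = r + \min\{r,d\}$, apply Corollary \ref{filbound} to see $\So_b V$ is $\sharp$-filtered, and conclude $H_V(n)$ is polynomial for $n \geq b$.
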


The work in this paper therefore provides a new proof of the bounds given in \cite[Theorem 3.3.4]{CEF} and \cite[Theorem 4.16]{JW}, while providing a novel bound in the cases where $k$ is a general field or where $G \neq 1, \Z/2\Z$.\\

\section*{Acknowledgments}
The author would like to give thanks to Rohit Nagpal, Steven Sam, and Andrew Snowden for numerous useful discussions relating to this work. The author would also like to thank Liping Li for conversations on his work in \cite{LY}, which helped in generalizing earlier versions of this work. Finally, the author would like to give special thanks to Jordan Ellenberg, whose insights greatly improved many aspects of this paper.\\

\section{$\FI_G$-Modules}

\subsection{Basic Definitions}

For the remainder of this paper we fix a commutative ring $k$, and a group $G$. We will use $[n]$ to denote the set $[n] = \{1,\ldots, n\}$. By convention, $[0] = \emptyset$.\\

\begin{definition}
We define the category $\FI_G$ to be that whose objects are finite sets, and whose morphisms are pairs $(f,g):S \rightarrow T$, of an injection of sets $f:S \hookrightarrow T$, and a map of sets $g:S \rightarrow G$. Given two composable morphisms in $\FI_G$, $(f,g), (f',g')$, we define $(f,g) \circ (f',g') = (f \circ f', h)$, where $h(x) = g'(x)\cdot g(f'(x))$.\\

Note that for any $n$, $\Aut_{\FI_G}([n]) = k[\Sn_n \wr G]$. For the remainder of this paper we shall write $G_n$ to denote the group $\Sn_n \wr G$.\\
\end{definition}

One immediately observes that the full subcategory of $\FI_G$ whose objects are the sets of the form $[n]$ is equivalent to $\FI_G$. For convenience of exposition, we will from this point on refer to this category as $\FI_G$. In the case where $G$ is the trivial group, one sees that the category $\FI_G$ is naturally equivalent to the category $\FI$ of finite sets and injections. If instead we specialize to $G = \Z/2\Z$, then $\FI_G$ is naturally equivalent to the category $\FI_{BC}$ discussed in \cite{JW}.\\

\begin{definition}
An \textbf{$\FI_G$-module} over $k$ is a covariant functor $V:\FI_G \rightarrow \text{Mod}_k$ from $\FI_G$ to the category of $k$-modules. We will often use the shorthand $V_n := V([n])$, and write $(f,g)_\as:V_n \rightarrow V_m$ to denote the map induced by an arrow $(f,g) \in \Hom_{\FI_G}([n],[m])$. The collection of morphisms $(f,g)_\as$ are known as the \textbf{induced maps} of $V$, while the maps $(f,g)_\as$, with $n < m$, are called the \textbf{transition maps} of $V$. The collection of $\FI_G$-modules over $k$, along with natural transformations, form a category, which we denote $\FI_G\Mod$.\\
\end{definition}

Many constructions from the category $\Mod_k$ will continue to work in $\FI_G\Mod$, so long as one applies the construction "point-wise." For example, there is a natural notion of direct sum of two $\FI_G$-modules $V, W$, where we set $(V \oplus W)_n = V_n \oplus W_n$. The induced maps of the sum are defined in the obvious way. One may similarly define point-wise notions of kernel and cokernel, which make $\FI_G\Mod$ an abelian category.\\

One should observe that for any fixed $n$, and any $\FI_G$-module $V$, the module $V_n$ carries the action of an $k[\Aut_{\FI_G}([n])] = k[G_n]$ module. One may therefore think of an $\FI_G$-module as a single object which encodes a collection of compatible $G_n$ representations, where the compatibility is given by the transition maps. This was the original motivation for Church, Ellenberg, and Farb \cite{CEF} studying $\FI$-modules and their relationship with Church and Farb's representation stability found in \cite{CF}.\\

\begin{definition}
We use $\FB_G$ to denote the subcategory of $\FI_G$ whose objects are the sets $[n]$, and whose morphisms are pairs $(f,g)$ such that $f$ is a bijection. An \textbf{$\FB_G$-module over $k$} is a functor $V:\FB_G \rightarrow \text{Mod}_k$. We denote the category of $\FB_G$-modules over $k$ by $\FB_G$-Mod.\\
\end{definition}

One can think of $\FB_G$-modules as sequences of $k[G_n]$-modules, with $n$ increasing. We see that $\FB_G$-Mod can be thought of as a subcategory of $\FI_G$-Mod, the subcategory of modules with trivial transition maps. Because of this, we will often use terms and definitions from the theory of $\FI_G$-modules when describing $\FB_G$-modules.\\

\begin{definition}\label{relproj}
For any non-negative integer $n$, we define the \textbf{free $\FI_G$-module of degree $n$} $M(n)$ by the following assignments: $M(n)_m := k[\Hom_{\FI_G}([n],[m])]$ is the free $k$-module spanned by vectors $\{e_{(f,g)}\}$ indexed by the members of $\Hom_{\FI_G}([n],[m])$, while induced maps act on the natural basis by composition. We will also refer to direct sums of free modules as being free.\\

If $W$ is a $k[G_n]$-module, then we define the \textbf{relatively projective} $\FI_G$-module over $W$ by the following assignments: $M(W)_m = W \otimes_{k[G_n]} k[\Hom_{\FI_G}([n],[m])]$, while induced maps act by composition in the second coordinate. More generally, if $W$ is an $\FB_G$-module, then the rule $M(W) = \bigoplus_{n \geq 0} M(W_n)$ makes $M$ into a functor from $\FB_G$-Mod to $\FI_G$-Mod. Modules in the image of this functor will also be referred to as relatively projective. We observe that $M(n) = M(k[G_n])$.\\
\end{definition}

\begin{remark}
Note that the terminology for the above definitions is not consistent in the literature. Relatively projective modules are the same as those denoted $\FI\sharp$-modules in \cite{CEF}, and those denoted free in \cite{CE}. Free modules are the same as those denoted principally projective in \cite{SS}.\\
\end{remark}

\begin{proposition}\label{yoneda}
If $W$ is a $k[G_n]$-module, and $V$ is any $\FI_G$-module, then
\begin{eqnarray}
\Hom_{\FI_G\text{-Mod}}(M(W),V) = \Hom_{G_n}(W,V_n). \label{projadj}
\end{eqnarray}
\end{proposition}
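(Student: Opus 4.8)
The plan is to establish the adjunction by a Yoneda-style argument, exploiting the fact that $M(W)$ is built from the representable functor $M(n) = k[\Hom_{\FI_G}([n],-)]$ by tensoring down along $k[G_n]$. First I would treat the case $W = k[G_n]$, i.e. $M(W) = M(n)$. Here the claim reduces to the ordinary Yoneda lemma in the enriched (additive) setting: a natural transformation $M(n) \to V$ is determined by the image of the universal element $e_{(\mathrm{id},1)} \in M(n)_n = k[\Hom_{\FI_G}([n],[n])]$, and conversely any element of $V_n$ determines such a transformation via $e_{(f,g)} \mapsto (f,g)_\as(v)$ (using that induced maps act on the basis of $M(n)$ by composition). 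One checks this assignment is $k$-linear and that naturality of the transformation corresponds exactly to compatibility with the $\FI_G$-action, so $\Hom_{\FI_G\Mod}(M(n),V) \cong V_n$ as $k$-modules. I would then observe that the residual left $G_n = \Aut_{\FI_G}([n])$-action on $M(n)$ (precomposition) corresponds under this bijection to the $G_n$-action on $V_n$, which is the content of the statement for $W = k[G_n]$ after noting $\Hom_{G_n}(k[G_n], V_n) = V_n$.

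Next I would pass from $M(n)$ to general $M(W)$ using right exactness and additivity of $M(-)$ together with the tensor–hom adjunction over $k[G_n]$. Concretely, $M(W)_m = W \otimes_{k[G_n]} M(n)_m$, so there is a natural surjection $W \otimes_k M(n) \twoheadrightarrow M(W)$, and more usefully one can present $W$ as a cokernel of a map of free $k[G_n]$-modules $k[G_n]^{(J)} \to k[G_n]^{(I)} \to W \to 0$; applying the additive right-exact functor $M(-)$ gives $M(n)^{(J)} \to M(n)^{(I)} \to M(W) \to 0$. Applying $\Hom_{\FI_G\Mod}(-,V)$ and the case already proven turns this into the exact sequence computing $\Hom_{G_n}(W, V_n)$ from the same presentation of $W$ (since $\Hom_{G_n}(k[G_n], V_n) = V_n$), and a diagram chase / five-lemma argument identifies the two kernels, yielding the isomorphism for arbitrary $W$. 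Finally the case of an $\FB_G$-module $W = \bigoplus_n W_n$ follows since $M$ commutes with direct sums and $\Hom$ out of a direct sum is the product.

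The main obstacle I expect is purely bookkeeping rather than conceptual: making precise that the bijection in the $W = k[G_n]$ case is natural in $V$ and $G_n$-equivariant in the appropriate sense, and that the functor $M(-)$ really is right exact and additive on $\FB_G$-modules so that the presentation argument goes through. None of these steps is deep — they amount to unwinding the definition of the induced maps on $M(n)$ and of the composition law in $\FI_G$ — but one must be careful about which $G_n$ acts on which side (the source versus automorphisms of the source) when invoking the tensor–hom adjunction. Alternatively, one could bypass the presentation argument entirely and give a direct construction: send $\Phi \in \Hom_{\FI_G\Mod}(M(W),V)$ to the composite $W \to M(W)_n \to V_n$, where the first map is $w \mapsto w \otimes e_{(\mathrm{id},1)}$, and check directly that this lands in $\Hom_{G_n}(W,V_n)$ and is inverse to the map $\psi \mapsto (w \otimes e_{(f,g)} \mapsto (f,g)_\as(\psi(w)))$; I would likely present this direct version as the cleanest route, using the $M(n)$ case only as motivation.
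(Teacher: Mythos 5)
Your proposal is correct, and the ``direct version'' you settle on at the end --- sending $\phi \in \Hom_{G_n}(W,V_n)$ to the morphism $w \otimes (f,g) \mapsto (f,g)_\as(\phi(w))$ and reading off the inverse by evaluating at $w \otimes \mathrm{id}$ --- is exactly the paper's proof of Proposition \ref{yoneda}. The Yoneda-plus-free-presentation route you sketch first is a valid alternative (and $M$ is in fact exact, as shown later in Proposition \ref{mprop}), but the paper does not need it.
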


\begin{proof}
Given any map $\phi_n$ from the right hand side, we can extend it to a map $\phi$ of $\FI_G$-modules by just insisting it commute with transition maps. For example, for any $m > n$, the module $M(W)_m = W \otimes_{G_n} k[\Hom_{\FI_G}([n],[m])]$ is generated by pure tensors $w \otimes (f,g)$, where $w \in W$ and $(f,g) \in \Hom_{\FI_G}([n],[m])$. We therefore define
\[
\phi_m:M(W)_m \rightarrow V_m, \text{  } \phi_m(w \otimes (f,g)) := (f,g)_\as(\phi_n(w \otimes id)).
\]
One can quickly see that this defines a well defined morphism of $\FI_G$-modules.\\
\end{proof}

The adjunction (\ref{projadj}) immediately implies that $M(W)$ is projective whenever $W$ is a projective $k[G_n]$-module. In fact, we will see in section \ref{nak} that all projective modules are relatively projective. Observe that this implies that the free $\FI_G$-modules are actually projective, and therefore $\FI_G\Mod$ has sufficiently many projective objects.\\

Note that in the special case where $W = k[G_n]$, the adjunction (\ref{projadj}) becomes
\[
\Hom_{\FI_G\text{-Mod}}(M(n),V) = V_n.
\]
In other words, a map from the free object of degree $n$ is equivalent to a choice of an element of $V_n$. More precisely, the map sending a homomorphism $\phi:M(n) \rightarrow V$ to $\phi(id_{[n]})$ is an isomorphism.\\

We will prove other important properties of the $M$ functor in Section \ref{mfun}.

\begin{definition} \label{fg}
Given a non-negative integer $m$, we say that an $\FI_G$-module $V$ is \textbf{generated in degree $\leq m$} if there exists a surjection
\[
\bigoplus_{i \in I} M(n_i) \twoheadrightarrow V,
\]
where $I$ is some index set and $n_i \leq m$ for all $i \in I$. If the index set $I$ can be taken to be finite, then we say that $V$ is \textbf{finitely generated}. We denote the category of finitely generated $\FI_G$-modules by $\FI_G$-mod. By convention, the trivial $\FI_G$-module is said to be generated in degree $\leq -1$.\\

We say that $V$ has \textbf{relation degree $\leq r$} if there is an exact sequence
\[
0 \rightarrow K \rightarrow M \rightarrow V \rightarrow 0,
\]
with $M$ relatively projective, such that $K$ is generated in degree $\leq r$. An exact sequence of the above form is known as a \textbf{presentation} for the module $V$, and we say that $V$ is \textbf{presented in finite degree} if $V$ has finite relation and generating degrees.\\

The \textbf{$M$-functor} $M:\FB_G\Mod \rightarrow \FI_G\Mod$ is that given by
\[
M(W) = \bigoplus_i M(W_i).
\]
\text{}\\
\end{definition}

Let $V$ be an $\FI_G$-module, and let $S$ be any subset of $\sqcup_n V_n$. Then we define the span of $S$ to be the $\FI_G$-module defined on objects by 
\[
\text{span}_k(S)_m = \{w \in V_m \mid w = \sum_i \lambda_i(f_i,g_i)_\as(x_i) \text{ with $x_i \in S$, $\lambda_i \in k$, and $(f_i,g_i) \in \Hom_{\FI_G}([n_i],[m])$}\},
\]
with induced maps restricted from $V$. Then $V$ is generated in degree $\leq n$ if and only if $\text{span}_k(\sqcup_{i \leq n} V_i) = V$. From the remark about maps from free objects, one can also see that $V$ is finitely generated if and only if it is the span of a finite set of elements.\\

\begin{definition}
Given an $\FI_G$-module $V$ we define $\deg(V)$, the \textbf{degree} of $V$, to be the supremum $\sup\{n \mid V_n \neq 0\} \in \N \cup \{-\infty, \infty\}$, where we use the convention that the supremum of the empty set is $-\infty$. We say that $V$ has \textbf{finite degree} if and only if $\deg(V) < \infty$.\\
\end{definition}

It is an immediate consequence of the relevant definitions that all $\FB_G$-modules with finite generating degree have finite degree.\\

One very non-obvious fact about the category $\FI_G$-mod is that it can be abelian. While finite generation is clearly preserved by quotients, it is not obvious that submodules of finitely generated objects are also finitely generated. We have the following theorem, usually called the \textbf{Noetherian property}.\\

\begin{theorem}[{SS2}, Corollary 1.2.2]
If $k$ is a Noetherian ring and $G$ is a polycyclic-by-finite group, then the category $\FI_G$-mod is abelian. That is, submodules of finitely generated $\FI_G$-modules are also finitely generated.\\
\end{theorem}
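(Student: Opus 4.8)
The plan is to reduce the statement to the known Noetherianity of $\FI$-modules (or more fundamentally of $\FI$ over a Noetherian ring, as in \cite{CEFN}), using the wreath-product structure and standard Noetherian induction. First I would recall the general strategy used by Sam and Snowden: one wants to show that the category $\FI_G$-mod satisfies the ascending chain condition on submodules of finitely generated modules. Since it suffices to check ACC on submodules of the free modules $M(n)$ (every finitely generated $V$ is a quotient of a finite sum $\bigoplus M(n_i)$, and a submodule of $V$ pulls back to a submodule of $\bigoplus M(n_i)$, with finite direct sums handled by the usual two-out-of-three argument for Noetherian objects), the whole problem is to prove that each $M(n)$ is a Noetherian object of $\FI_G\Mod$.

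Next I would set up the comparison with $\FI$. There is a forgetful-type functor relating $\FI_G$-modules to $\FI$-modules: restricting along the inclusion $\FI \hookrightarrow \FI_G$ (pair an injection $f$ with the trivial map to $G$) sends an $\FI_G$-module to an $\FI$-module, and this functor is exact and faithful; moreover it preserves finite generation in a controlled way, because $M(n)$ as an $\FI_G$-module, when restricted to $\FI$, decomposes (after base change along the orbits of $G_n$ on $\Hom_{\FI_G}([n],[m])$) into a module built from finitely many copies of $\FI$-free modules twisted by the $G$-data. Concretely, $M(n)_m = k[\Hom_{\FI_G}([n],[m])]$ and $\Hom_{\FI_G}([n],[m]) = \Hom_{\FI}([n],[m]) \times G^n$ as sets, so $M(n)$ restricted to $\FI$ is a finitely generated free $\FI$-module over the ring $k[G^n]$ — which is again Noetherian when $k$ is Noetherian and $G$ is polycyclic-by-finite, since then $k[G]$ is Noetherian and hence so is $k[G^n]$. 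Thus $M(n)\vert_{\FI}$ is a Noetherian object in $\FI$-mod over the Noetherian ring $k[G^n]$ by the Church--Ellenberg--Farb--Nagpal theorem.

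The final step is to transfer Noetherianity back across the faithful exact restriction functor. An ascending chain of $\FI_G$-submodules of $M(n)$ maps to an ascending chain of $\FI$-submodules (in fact $k[G^n]$-linear $\FI$-submodules) of $M(n)\vert_{\FI}$; the latter stabilizes by the previous paragraph, and because the restriction functor is faithful and exact, a containment $W \subseteq W'$ of $\FI_G$-submodules that becomes an equality after restriction must already be an equality. Hence the original chain stabilizes, $M(n)$ is Noetherian, and the theorem follows.

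I expect the main obstacle to be the book-keeping in the second step: making the identification $M(n)\vert_{\FI} \cong$ (an $\FI$-free module over $k[G^n]$) genuinely functorial in $\FI$, i.e. checking that the $G$-labels on the ``new'' elements behave correctly under composition so that the $k[G^n]$-module structure is respected by all transition maps. One must also be a little careful that Noetherianity over the coefficient ring $k[G^n]$ (rather than $k$) is exactly what the cited $\FI$-Noetherianity theorem provides — this is why the polycyclic-by-finite hypothesis on $G$ is needed, via Hall's theorem that group rings of polycyclic-by-finite groups over Noetherian rings are Noetherian. Everything else (reduction to free modules, two-out-of-three for Noetherian objects, transfer along a faithful exact functor) is routine.
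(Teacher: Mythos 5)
The paper offers no proof of this statement --- it is quoted from \cite{SS2} --- so I can only judge your argument on its own terms. Your skeleton (reduce to Noetherianity of each $M(n)$, restrict along $\FI\hookrightarrow\FI_G$, invoke \cite{CEFN}) is the natural first attempt, and when $G$ is \emph{finite} it is a complete proof: since the $\FI$-transition maps of $M(n)$ do not alter the $G$-labels, $M(n)\vert_{\FI}\cong\bigoplus_{g\in G^n}M_{\FI}(n)$ is a finitely generated $\FI$-module over $k$, hence Noetherian by \cite{CEFN}, and restriction reflects containments of submodules. For infinite $G$, however, there is a genuine gap in the parenthetical claim that a chain of $\FI_G$-submodules of $M(n)$ restricts to a chain of \emph{$k[G^n]$-linear} $\FI$-submodules of $M_{\FI}(n)\otimes_k k[G^n]$. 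The $k[G^n]$-actions on $M(n)\vert_{\FI}$ that commute with the $\FI$-structure act on the tuple of $G$-labels by a fixed simultaneous left or right multiplication (e.g.\ precomposition with the automorphisms $(\operatorname{id}_{[n]},g'')$), whereas an $\FI_G$-submodule is only required to be closed under \emph{post}composition, which multiplies the label of a basis vector $e_{(f,g)}$ at $x$ by $g'(f(x))$ --- an amount depending on $f$. Neither action preserves $\FI_G$-submodules: for $n=1$ the submodule generated by $e_{(\operatorname{id},g_0)}-e_{(\operatorname{id},1)}$ has degree-one piece the right ideal $(g_0-1)k[G]$, which is not a left $k[G]$-submodule for nonabelian $G$; and for $n=2$ the submodule generated by $e_{(\operatorname{id},(1,1))}+e_{((1\,2),(1,1))}$ is not stable under relabelling both terms by a tuple $(c,d)$ with $c\neq d$, even for $G$ abelian (the realizable relabellings of the second term are forced to be the transpose of those of the first). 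Without $k[G^n]$-linearity you are left with a chain of merely $k$-linear $\FI$-submodules of a module that is not finitely generated over $k$ when $G$ is infinite, and \cite{CEFN} then gives you nothing.

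This is not the ``book-keeping'' issue you flag at the end --- the identification $M(n)\vert_{\FI}\cong M_{\FI}(n)\otimes_k k[G^n]$ is indeed functorial for $\FI$-morphisms --- but a structural left/right mismatch, and it is exactly why the infinite polycyclic-by-finite case (the infinite dihedral group, the integral Heisenberg group, even $\Z$ itself in view of the $n=2$ example) cannot be disposed of by a one-line restriction to $\FI$ and requires the more carefully interleaved induction of \cite{SS2}, where Hall's theorem on the Noetherianity of $k[G]$ enters at the level of the combinatorial normal form for morphisms rather than as an external coefficient ring. Symmetrizing, i.e.\ replacing each $W^{(i)}$ by $k[G^n]\cdot W^{(i)}$, does not repair the argument, since stabilization of the symmetrized chain does not force stabilization of the original one. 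The remaining ingredients of your outline --- reduction to the $M(n)$, the two-out-of-three lemma for Noetherian objects, exactness and faithfulness of restriction, and the Noetherianity of $k[G^n]$ via Hall --- are all correct.
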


Historically, the Noetherian property was proven for $\FI$-modules over a field of characteristic 0 in \cite[Theorem 1.3]{CEF} and independently by Snowden in \cite[Theorem 2.3]{S}, and over a general Noetherian ring in \cite[Theorem A]{CEFN}. The case $G = \Z/2\Z$ was proven in \cite[Theorem 4.21]{JW}. The paper \cite{SS2} proves the theorem for all polycyclic-by-finite groups $G$.\\

\begin{remark}
Note that the above theorem requires that the group $G$ be polycyclic-by-finite. In this paper we will not need this assumption on $G$. In particular, our results will be independent of the Noetherian property.\\
\end{remark}

\subsection{The Homology Functors and Nakayama's Lemma} \label{nak}

\begin{definition}
Let $V$ be an $\FI_G$-module, and let $n$ be a non-negative integer. We use $V_{<n} \subseteq V_n$ to denote the submodule of $V_n$ spanned by the images of all transition maps. Put another way, $V_{<n}$ is the submodule of $V_n$ generated by the elements 
\[
\cup_{i < n} \cup_{(f,g) \in \Hom_{\FI_G}([i],[n])}(f,g)_\as(V_i).
\]
The \textbf{zeroth homology} of $V$ is the $\FB_G$-module defined by $H_0(V)_n = V/V_{<n}$.\\
\end{definition}

This notion was first introduced for $\FI$-modules in \cite{CEF}, and later expanded upon in \cite{CEFN} and \cite{CE}. This functor was also considered in \cite{GL} and \cite{GL2}, albeit in a slightly different language.\\

\begin{proposition}\label{naklemma}
The zeroth homology functor $H_0$ enjoys the following properties:
\begin{enumerate}
\item for any $k[G_n]$-module $W$, $H_0(M(W))_n = W$, while $H_0(M(W))_m = 0$ for all $m \neq n$;
\item if $\{v_i\}_{i \in I} \subseteq \sqcup_n H_0(V)_n$ is a generating set for $H_0(V)$, and $w_i$ is a lift of $v_i$ for each $i \in I$, then $\{w_i\}_{i \in I}$ is a generating set for $V$. Equivalently, $H_0(V) = 0$ if and only if $V = 0$ (Nakayama's Lemma);
\item $H_0(V)$ is generated in degree $\leq n$ (resp. finitely generated) if and only if $V$ is generated in degree $\leq n$ (resp. finitely generated);
\item $H_0$ is left adjoint to the inclusion functor $\FB_G$-Mod $\rightarrow \FI_G$-Mod;
\item $H_0(V)$ is right exact, and maps projective modules to projective modules.\\
\end{enumerate}
\end{proposition}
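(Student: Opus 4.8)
The plan is to prove the five parts in the order (1), (4), (5), (2), (3), since the later parts rely on the earlier ones. Part (1) is a direct unwinding of the definition of $M(W)$. If $m < n$ there are no injections $[n] \hookrightarrow [m]$, so $M(W)_m = 0$. If $m = n$ then $\Hom_{\FI_G}([n],[n]) = G_n$, so $M(W)_n = W \otimes_{k[G_n]} k[G_n] \cong W$, and since the degrees below $n$ vanish there are no transition maps into degree $n$; thus $H_0(M(W))_n = W$. If $m > n$, then each generator $w \otimes (f,g)$ of $M(W)_m$, with $(f,g)\colon [n] \to [m]$, is the image of $w \otimes \mathrm{id}_{[n]} \in M(W)_n$ under the transition map labelled $(f,g)$, so $M(W)_{<m} = M(W)_m$ and $H_0(M(W))_m = 0$.

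For part (4) I would compute both sides of the proposed adjunction. Writing $\iota$ for the inclusion $\FB_G\text{-Mod} \to \FI_G\text{-Mod}$, a morphism $V \to \iota U$ is a family of $k[G_n]$-maps $V_n \to U_n$ compatible with the transition maps; since $\iota U$ has trivial transition maps, compatibility forces each $V_n \to U_n$ to annihilate $V_{<n}$, hence to factor uniquely through $H_0(V)_n = V_n/V_{<n}$, and conversely any family of $k[G_n]$-maps $H_0(V)_n \to U_n$ assembles to a morphism of $\FI_G$-modules (the transition-map compatibility being automatic, both sides having trivial transition maps). This yields the natural isomorphism $\Hom_{\FI_G\text{-Mod}}(V, \iota U) \cong \Hom_{\FB_G\text{-Mod}}(H_0(V), U)$. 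Part (5) is then formal: a left adjoint preserves colimits and is in particular right exact, and since $\iota$ is exact (exactness of $\FB_G$-module sequences is checked degreewise, exactly as for $\FI_G$-modules), its left adjoint $H_0$ sends projectives to projectives.

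The substance of the proposition is part (2), which I would prove by induction on degree. Given a generating set $\{v_i\}_{i\in I}$ for $H_0(V)$ with $v_i \in H_0(V)_{n_i}$ and chosen lifts $w_i \in V_{n_i}$, let $W \subseteq V$ be the sub-$\FI_G$-module they generate. Assuming $W_m = V_m$ for all $m < n$, observe first that $V_{<n} \subseteq W_n$, because $V_{<n}$ is spanned by images of transition maps out of the $V_m = W_m$ with $m < n$, which lie in $W_n$. Since an $\FB_G$-module has no transition maps, the hypothesis that $\{v_i\}$ generates $H_0(V)$ means precisely that for each $n$ the classes $\{\bar w_i : n_i = n\}$ generate $H_0(V)_n = V_n/V_{<n}$ over $k[G_n]$; combined with $V_{<n} \subseteq W_n$ and $k[G_n]\cdot w_i \subseteq W_n$ this gives $V_n = W_n$. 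Hence $W = V$, and the ``equivalently'' clause is the case $I = \emptyset$ together with its trivial converse. I expect the only delicate point to be the bookkeeping of what ``generating set'' means for an $\FB_G$-module (namely, degreewise over each $k[G_n]$); there is no genuine obstacle.

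Finally, part (3) follows by combining (1), (2) and right exactness. If $V$ is generated in degree $\leq n$, apply $H_0$ to a surjection $\bigoplus_i M(n_i) \twoheadrightarrow V$ with all $n_i \leq n$: by right exactness and (1), $H_0(V)$ is a quotient of an $\FB_G$-module supported in degrees $\leq n$, hence generated in degree $\leq n$. Conversely, a generating set of $H_0(V)$ concentrated in degrees $\leq n$ lifts by (2) to a generating set $\{w_i\}$ of $V$ with $w_i \in V_{n_i}$, $n_i \leq n$, and the induced map $\bigoplus_i M(n_i) \to V$ (determined by $\mathrm{id}_{[n_i]} \mapsto w_i$) is then surjective, so $V$ is generated in degree $\leq n$. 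Replacing ``degree $\leq n$'' by ``finite'' and $I$ by a finite set throughout gives the finite-generation statement.
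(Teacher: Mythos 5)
Your proof is correct and follows essentially the same approach as the paper: part (1) by unwinding the definition of $M(W)$, part (2) by induction on degree using the submodule generated by the lifts, part (4) by exhibiting both directions of the adjunction explicitly, and part (5) by the formal adjoint-functor argument. The only differences are cosmetic — you reorder the parts and spell out part (3), which the paper dispatches as an immediate consequence of Nakayama's lemma.
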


\begin{proof}
The first non-zero entry in $M(W)$ is $M(W)_n = W$. On the other hand, one sees immediately from definition that $M(W)$ is generated in this degree. In other words, all other elements in $M(W)$ are linear combinations of transition maps applied to elements of $M(W)_n$. This implies the first statement.\\

Let $v_i$ and $w_i$ be as in the second statement. Let $j$ be the least index such that $V_j \neq 0$. Then $H_0(V)_j = V_j$, and therefore $V_j$ is generated by the $w_i$ by assumption. To finish the proof we proceed by induction. If $n > j$, and $v \in V_n$, then the image of $v$ in $H_0(V)_n$ can be expressed as a linear combination of the $v_i$. In particular, $v$ is a linear combination of the $w_i$, as well as images of elements from lesser degrees. Applying the inductive hypothesis completes the proof.\\

The third statement is an immediate consequence of Nakayama's Lemma.\\

Let $V$ be an $\FI_G$-module, and let $W$ be a $\FB_G$-module. If $\phi:H_0(V) \rightarrow W$ is any map of $\FB_G$-modules, then for each $n$ we define a map of $k[G_n]$-modules $\widetilde{\phi}_n:V_n \rightarrow W_n$ via $\widetilde{\phi}_n(v) = \phi_n(\pi(v))$, where $\pi:V_n \rightarrow H_0(V)_n$ is the quotient map. We claim that $\widetilde{\phi}$ is actually a morphism of $\FI_G$-modules. If $(f,g)_\as$ is any transition map, and $v \in V_m$, then
\[
\widetilde{\phi}_n((f,g)_\as(v)) = \phi_n(\pi((f,g)_\as(v))) = 0 = (f,g)_\as(\widetilde{\phi}_n(v)).
\]
On the other hand, if $\sigma \in k[G_n]$, then
\[
\widetilde{\phi}_n(\sigma(v)) = \phi_n(\pi(\sigma(v))) = \phi_n(\sigma(\pi(v))) = \sigma(\phi_n(\pi(v))) = \sigma(\widetilde{\phi}(v)).
\]
Conversely, let $\phi:V \rightarrow W$ be a morphism of $\FI_G$-modules. Because $\phi$ respects transition maps, and because $W$ has trivial transition maps, it follows that $\phi_n$ vanishes on the images of the transition maps into $V_n$. In particular, the map $\widetilde{\phi}:H_0(V) \rightarrow W$ given by $\widetilde{\phi}_n(v) = \phi_n(v)$ is well defined. The two constructions give above are clearly inverses of one another, proving the adjunction.\\

The last statement is a consequence of standard homological algebra. Left adjoints are always right exact, and any left adjoint to an exact functor must preserve projectives.\\

\end{proof}

As a quick application of the above proposition, we prove that all projective modules are relatively projective.\\

\begin{proposition}\label{projclass}
Let $V$ be an $\FI_G$-module. Then $V$ is projective if and only if $V = \bigoplus_i M(W_i)$, where each $W_i$ is some projective $k[G_i]$-module. In particular, all projective $\FI_G$-modules are relatively projective.\\
\end{proposition}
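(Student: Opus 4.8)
The plan is to prove the two implications separately; the "if" direction is formal, and all the real content sits in the "only if" direction.

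For the "if" direction, suppose $V = \bigoplus_i M(W_i)$ with each $W_i$ a projective $k[G_{n_i}]$-module. By the adjunction $(\ref{projadj})$ of Proposition \ref{yoneda}, $\Hom_{\FI_G\text{-Mod}}(M(W_i), -)$ is the composite of the exact evaluation functor $V \mapsto V_{n_i}$ with $\Hom_{G_{n_i}}(W_i, -)$, and the latter is exact because $W_i$ is projective over $k[G_{n_i}]$; hence each $M(W_i)$ is projective, and an arbitrary direct sum of projectives is projective (as $\Hom$ out of a coproduct is a product of exact functors). So $V$ is projective.

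For "only if", suppose $V$ is projective, and set $W := H_0(V)$, regarded as an $\FB_G$-module. First I would record that $W$ is a projective $\FB_G$-module: $H_0$ carries projectives to projectives by Proposition \ref{naklemma}(5), and since an $\FB_G$-module is the same thing as a sequence of $k[G_n]$-modules (i.e. $\FB_G$ is a groupoid, so $\FB_G$-Mod is equivalent to $\prod_n k[G_n]$-Mod), this says precisely that each $W_n = H_0(V)_n$ is a projective $k[G_n]$-module. Consequently the defining quotient $V_n \twoheadrightarrow V_n/V_{<n} = W_n$ splits over $k[G_n]$; fix $k[G_n]$-linear sections $s_n \colon W_n \to V_n$. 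Via the adjunction $(\ref{projadj})$ each $s_n \in \Hom_{G_n}(W_n, V_n)$ corresponds to a map $M(W_n) \to V$, and assembling these gives a map of $\FI_G$-modules $s \colon M(W) = \bigoplus_n M(W_n) \to V$.

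It then remains to show $s$ is an isomorphism. Applying the right exact, additive functor $H_0$ and using Proposition \ref{naklemma}(1) (so $H_0(M(W_n))$ is $W_n$ in degree $n$ and $0$ elsewhere), one checks that $H_0(s)\colon H_0(M(W)) = \bigoplus_n (W_n \text{ in degree } n) \to H_0(V) = W$ is, in degree $n$, the composite $W_n \xrightarrow{s_n} V_n \twoheadrightarrow W_n$, which is the identity; so $H_0(s)$ is an isomorphism. Right exactness gives $H_0(\coker s) = \coker(H_0 s) = 0$, whence $\coker s = 0$ by Nakayama's Lemma (Proposition \ref{naklemma}(2)); thus $s$ is surjective. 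Since $V$ is projective, $s$ splits, giving $M(W) \cong V \oplus \ker s$; applying $H_0$ identifies $H_0(s)$ with the projection $H_0(V) \oplus H_0(\ker s) \to H_0(V)$, and since this is an isomorphism we get $H_0(\ker s) = 0$, hence $\ker s = 0$ by Nakayama again. Therefore $V \cong M(W) = \bigoplus_n M(W_n)$ with each $W_n$ projective over $k[G_n]$, and such a module is relatively projective by Definition \ref{relproj} (it is $M$ applied to the $\FB_G$-module $W$). The main obstacle is arranging and then identifying the map $s$: building it at all requires first knowing the $k[G_n]$-modules $W_n$ are projective (which is where Proposition \ref{naklemma}(5) and the groupoid structure of $\FB_G$ are used), surjectivity of $s$ is a routine Nakayama argument, and injectivity is precisely where projectivity of $V$ itself is needed, to split $s$ and run a second Nakayama argument. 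Along the way one also uses the routine fact that $H_0$ commutes with arbitrary direct sums, which holds because $V_{<n}$ is generated by images of transition maps and direct sums are exact in $\Mod_k$.
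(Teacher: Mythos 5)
Your proof is correct and follows essentially the same route as the paper: deduce that $H_0(V)$ is a projective $\FB_G$-module via Proposition \ref{naklemma}(5), use $k[G_n]$-linear sections of the quotient maps to build a map $M(H_0(V)) \to V$, get surjectivity from Nakayama, and get injectivity by splitting this surjection (using projectivity of $V$) and applying Nakayama to the kernel. The only differences are expository — you spell out the ``if'' direction and the commutation of $H_0$ with direct sums, which the paper leaves implicit.
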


\begin{proof}
We have already seen that modules of the form $\bigoplus_i M(W_i)$, with $W_i$ projective, are projective. Conversely, let $V$ be a projective $\FI_G$-module. Then part 5 of Proposition \ref{naklemma} implies that $H_0(V)$ is a projective $\FB_G$-module. It follows that the quotient map $q_n:V_n \rightarrow H_0(V)_n$ admits a section $\iota_n:H_0(V)_n \rightarrow V_n$. Proposition \ref{yoneda} implies there is a map $\bigoplus_i M(H_0(V)_i) \rightarrow V$ induced by the collection of $\iota_n$. Nakayama's lemma implies that the map is a surjection. We claim that this map is actually injective as well. Let $K$ be its kernel, and apply $H_0$ to the exact sequence
\[
0 \rightarrow K \rightarrow \bigoplus M(H_0(V)_i) \rightarrow V \rightarrow 0.
\]
Because $V$ is projective, and because $H_0$ is right exact by the previous proposition, it follows that there is an exact sequence
\[
0 \rightarrow H_0(K) \rightarrow H_0(V) \rightarrow H_0(V) \rightarrow 0
\]
where we have used part 2 to simplify the second term. It is easy to see from construction that the right most non-trivial morphism in this sequence is an isomorphism, and therefore $H_0(K) = 0$. Nakayama's lemma now implies that $K = 0$, as desired.\\
\end{proof}

It follows from this proposition that if $k$ is a field of characteristic 0, then the notions of relatively projective and projective coincide. Over an arbitrary commutative ring, this is no longer the case. We will find that this has interesting consequences later.\\

\begin{definition}
We write $H_i$ for the $i$-th derived functor of $H_0$. We call the collection of these functors the \textbf{homology functors}.\\
\end{definition}

The following nomenclature is used in \cite{L}.\\

\begin{definition}\label{reldeg}
If $V$ is an $\FI_G$-module, then for each $i$ we define its \textbf{$i$-th homological degree} to be $hd_i(V) = \deg(H_i(V))$. For some non-negative constant $N$, we say that $V$ has \textbf{regularity $\leq N$} if $hd_i(V)-i \leq N$ for each $i \geq 1$. We write $\reg(V)$ for the smallest value $N$ for which $V$ has regularity $\leq N$.\\
\end{definition}

\begin{remark}\label{reldegdef}
Nakayama's lemma tells us that the zeroth homological degree is an optimal bound on the the generating degree of $V$. One would hope that the first homological degree would be an optimal bound on the relation degree of $K$. Indeed, if 
\[
0 \rightarrow K \rightarrow M \rightarrow V \rightarrow 0
\]
is a presentation of V with $M$ generated in degree $\leq d$ and $K$ generated in degree $\leq r$, then an application of the $H_0$ functor implies that
\begin{eqnarray}
hd_1(V) \leq hd_0(K) = r \leq \max\{d,hd_1(V)\}.\label{reldegbound}
\end{eqnarray}
Despite this, it is not clear whether $V$ admits a presentation whose kernel is generated in degree $\leq hd_1(V)$.\\

The bounds (\ref{reldegbound}) imply that $r = hd_1(V)$ whenever $hd_1(V) \geq hd_0(V)$. This is the typical case. In fact, if instead we assume that $hd_1(V) < hd_0(V)$, then Li and Yu show \cite[Corollary 3.4]{LY} there exists an exact sequence
\[
0 \rightarrow V' \rightarrow V \rightarrow Q \rightarrow 0
\]
where $Q$ is relatively projective and $hd_0(V') = hd_0(V)-1$. One immediately notes from this, and Theorem \ref{homacyclic}, that $H_i(V') = H_i(V)$ for $i \geq 1$. In the paper \cite[Theorem A]{CE}, Church and Ellenberg show that
\[
\reg(V) \leq r + \min\{r,d\} -1
\]
where $r$ is the relation degree of $V$ and $d$ is the generating degree. The observations made in this remark will allow us to convert this bound to a bound in terms of the first homological degree $hd_1(V)$. If it is the case that $hd_1(V) \geq d$, then $r = hd_1(V) \geq d$, and the above bound becomes
\[
\reg(V) \leq r + d-1 = hd_1(V) + \min{hd_1(V),d} - 1.
\]
Otherwise, we may apply the lemma of Li and Yu, as well as induction, to conclude there is some submodule $V'' \subseteq V$ which is generated in degree $\leq hd_1(V)$ and 
\[
\reg(V) = \reg(V'') \leq 2hd_1(V'')-1 = hd_1(V) + \min\{hd_1(V),d\} - 1.
\]
We may therefore conclude that the bounds of Church and Ellenberg remain true when the relation degree is replaced with the first homological degree. In this paper, we will prove bounds using methods of Church and Ellenberg. To conclude the bounds promised in the introduction, one simply applies the methods in this remark.\\
\end{remark}

The main result of \cite{CE} is a bound on the regularity of an $\FI$-module in terms of its generating and relation degrees. Later, \cite[Theorem 1.5]{L} used different methods to prove conditional bounds on the regularity of finitely generated $\FI_G$-modules whenever $G$ is finite. Li also gives non-conditional bounds in the case of $\FI$, and where $k$ is a field of characteristic 0 \cite[Theorem 1.17]{L}\\

\subsection{The Category $\FI_G\sharp$ and the $M$ functor} \label{mfun}

\begin{definition} \label{sharp}
We define the category $\FI_G\sharp$ as follows. The objects of the category $\FI_G\sharp$ are once again the sets $[n]$, while the morphisms are triples $(A,f,g):[n] \rightarrow [m]$ such that $A\subseteq [n]$, $f:A \rightarrow [m]$ is an injection, and $g:A \rightarrow G$ is a map of sets. Composition in this category is defined in the following way. If $(A,f,g)$ and $(B,f',g')$ are two morphisms which can be composed then $(B,f',g') \circ (A,f,g) = (A \cap f^{-1}(B), f' \circ f,h)$ where $h(x) = g(x)g'(f(x))$, as before. An $\FI_G\sharp$-module over $k$ is a covariant functor $\FI_G\sharp \rightarrow \text{Mod}_k$.
\end{definition}

This category has been studied in the case where $G$ is the trivial group \cite{CEF}, as well as the case where $G = \Z/2\Z$ \cite{JW}. One sees that there is a natural inclusion $\FI_G \rightarrow \FI_G\sharp$, which induces a forgetful functor $\FI_G\sharp\text{-Mod} \rightarrow \FI_G\text{-Mod}$. For this reason we may consider a kind of homology functor $H_0:\FI_G\sharp\text{-Mod} \rightarrow \FB_G\text{-Mod}$ which is defined as the composition of the forgetful map, and the usual zeroth homology functor.\\

The first example of an $\FI_G\sharp$-module is the free module $M(m)$. Indeed, We endow $M(m)$ with the structure of an $\FI_G\sharp$-module as follows. let $e_{(f,g)}\in M(m)_n$ be one of the canonical basis vectors, and let $(A,f',g'):[n] \rightarrow [r]$ be a morphism in $\FI_G\sharp$. Then we set
\[
(A,f',g')_\as e_{(f,g)} = \begin{cases} 0 &\text{ if $f([m]) \not\subseteq A$}\\ e_{(f' \circ f, h)} &\text{ otherwise,}\end{cases}
\]
where $h:[m] \rightarrow G$ is the function $h(x) = g(x)g'(f(x))$. This same argument shows that $M(W)$ is an $\FI_G\sharp$-module for any $\FB_G$-module $W$.\\

The above discussion shows that we may consider $M$ as being valued in $\FI_G\sharp$-Mod.\\

\begin{proposition}\label{mprop}
The functor $M:\FB_G$-Mod $\rightarrow \FI_G$-Mod enjoys the following properties
\begin{enumerate}
\item The composition $H_0 \circ M$ is isomorphic to the identity;
\item $M$ is exact;
\item for all $i \geq 1$, $H_i \circ M = 0$.\\
\end{enumerate}
\end{proposition}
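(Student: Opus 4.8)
The plan is to establish the three assertions in the stated order, since (2) and (3) rest on (1).

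For (1), recall that kernels and cokernels of $\FI_G$-modules are computed pointwise, and that $H_0$ is a left adjoint by Proposition \ref{naklemma}(4), hence commutes with direct sums. Writing $M(W) = \bigoplus_{n} M(W_n)$ and applying Proposition \ref{naklemma}(1) to each summand gives $H_0(M(W))_m = \bigoplus_n H_0(M(W_n))_m = W_m$ for every $m$, with the identification natural in $W$ and carrying the (trivial) transition maps of $H_0(M(W))$ to those of $W$. Thus $H_0 \circ M \cong \mathrm{id}$ as functors $\FB_G$-Mod $\to \FB_G$-Mod.

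For (2), exactness of $M$ may be checked after evaluating at each $[m]$, where $M(W)_m = \bigoplus_n W_n \otimes_{k[G_n]} k[\Hom_{\FI_G}([n],[m])]$. Since $W \mapsto W_n$ and arbitrary direct sums are exact on $k$-modules, it suffices to prove that $-\otimes_{k[G_n]} k[\Hom_{\FI_G}([n],[m])]$ is exact on $k[G_n]$-modules, i.e.\ that $k[\Hom_{\FI_G}([n],[m])]$ is a flat — indeed free — right $k[G_n]$-module. For this I would check that the right $G_n = \Sn_n \wr G$-action on the set $\Hom_{\FI_G}([n],[m])$ by precomposition is free: if $(f,g) \circ (\sigma,\tau) = (f,g)$ for an automorphism $(\sigma,\tau)$ of $[n]$, then $f \circ \sigma = f$ forces $\sigma = \mathrm{id}$ since $f$ is injective, after which equality of the $G$-labels forces $\tau$ to be trivial. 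Hence $\Hom_{\FI_G}([n],[m])$ is a disjoint union of free $G_n$-orbits, $k[\Hom_{\FI_G}([n],[m])]$ is free over $k[G_n]$, and the tensor product is exact; so $M$ is exact.

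For (3), recall that the $H_i = L_iH_0$ are computed from any projective resolution and that $\FI_G$-Mod has enough projectives. Given an $\FB_G$-module $W$, choose a projective resolution $P_\bullet \twoheadrightarrow W$ in $\FB_G$-Mod (which exists, $\FB_G$-Mod being a product of the module categories $k[G_n]$-Mod). Each $P_i$ is a direct sum of projective $k[G_n]$-modules, so $M(P_i)$ is a projective $\FI_G$-module by Proposition \ref{yoneda} (equivalently Proposition \ref{projclass}), and $M(P_\bullet) \twoheadrightarrow M(W)$ is exact by (2); hence $M(P_\bullet)$ is a projective resolution of $M(W)$. Therefore $H_i(M(W))$ is the $i$-th homology of the complex $H_0(M(P_\bullet))$, which by (1) is naturally isomorphic to the complex $P_\bullet$. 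Since $P_\bullet \to W$ is a resolution, this complex is exact in positive degrees, so $H_i(M(W)) = 0$ for all $i \geq 1$ (and one recovers $H_0(M(W)) = W$). All the steps are routine; the only points needing a moment's care are the freeness of the $G_n$-set $\Hom_{\FI_G}([n],[m])$ in (2) and the simultaneous use in (3) that $M$ is both exact and preserves projectives, so that $M(P_\bullet)$ legitimately computes the derived functors.
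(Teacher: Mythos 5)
Your proposal is correct and follows essentially the same route as the paper: part (1) from Proposition \ref{naklemma}, part (2) by showing the precomposition action of $G_n$ on $\Hom_{\FI_G}([n],[m])$ is free so that $k[\Hom_{\FI_G}([n],[m])]$ is a free (hence flat) $k[G_n]$-module, and part (3) from the combination of (1), (2), and the fact that $M$ preserves projectives. The only cosmetic difference is in (3), where you transport a projective resolution of $W$ through $M$ rather than invoking the degenerate Grothendieck spectral sequence; these are the same argument in different clothing.
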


\begin{proof}
The first statement follows immediately from the first part of Proposition \ref{naklemma} and the definition of $M$.\\

For the second statement, it suffices to show that the functor preserves exactness of sequences of the form
\[
0 \rightarrow W' \rightarrow W \rightarrow W'' \rightarrow 0
\]
where $W',W$ and $W''$ are $k[G_m]$-modules for some $m$. For any $n$, we have that $M(W)_n = W \otimes_{k[G_m]} k[\Hom_{\FI_G}([m],[n])]$. Because kernels and cokernels are computed point-wise, it suffices to show that $k[\Hom_{\FI_G}([m],[n])]$ is a flat $k[G_m]$-module. Fix a representative from each orbit of the $G_m$ action on $\Hom_{\FI_G}([m],[n])$. If we set $I$ to be the collection of these maps, then let $\mathfrak{B} = \{e_{(f,g)}\}_{(f,g) \in I}$ be the associated set of canonical basis vectors of $k[\Hom_{\FI_G}([m],[n])]$. We claim that $k[\Hom_{\FI_G}([m],[n])]$ is a free $k[G_m]$-module with basis $\mathfrak{B}$. Because the orbits partition the whole of $\Hom_{\FI_G}([m],[n])$, it follows that this set is spanning. On the other hand, assume that one has an equation $\sum_{(f,g) \in \mathfrak{B}} e_{(f,g)} x_{(f,g)} = 0$, for some $x_{(f,g)} \in k[G_m]$. We may write $x_{(f,g)} = \sum_{\sigma \in G_m} a_{(f,g),\sigma}\sigma$, and therefore
\[
\sum_{(f,g),\sigma} a_{(f,g),\sigma}e_{(f,g)\circ\sigma} = 0.
\]
We observe that for distinct $(f,g),(f',g') \in \mathfrak{B}$ and any $\sigma,\tau \in G_m$, the elements $(f,g)\circ\sigma$ and $(f',g')\circ\tau$ must be distinct, as they are in different orbits by construction. If we fix $(f,g)$ and vary $\sigma$, then $(f,g)\circ\sigma = (f,g)\circ\tau$ implies that $\sigma = \tau$ because $(f,g)$ is monic. In particular, the above sum can be written
\[
\sum_{(f,g) \circ \sigma = (f',g') \in \Hom_{\FI_G}([m],[n])} a_{(f,g),\sigma}e_{(f',g')} = 0
\]
with each $(f',g')$ appearing at most once. This implies that $a_{(f,g),\sigma} = 0$ for all $f,g,$ and $\sigma$, as desired.\\

The final statement follows from the first two. Because $M$ maps projective objects to projective objects by Proposition \ref{projclass}, The derived functor of the composition $H_0 \circ M$ can be computed using the Grothendieck spectral sequence. This spectral sequence will only have one row because $M$ is exact. It therefore degenerates and we find that the derived functors of $H_0 \circ M$ are isomorphic to $H_i \circ M$. On the other hand, statement 2 tells us that $H_0 \circ M$ is the identity functor, which is clearly exact. This completes the proof.\\
\end{proof}

We note that the composition $M \circ H_0$ is not isomorphic the identity functor if we consider $M$ as being valued in $\FI_G$-Mod. If we instead consider $M$ as being valued in $\FI_G\sharp$-Mod, then this composition is isomorphic to the identity as the following theorem shows.\\

\begin{theorem}[\cite{CEF},\cite{JW}]\label{equiv}
The functor $M:\FB_G\text{-Mod} \rightarrow \FI_G\sharp\text{-Mod}$ is an equivalence of categories with inverse $H_0:\FI_G\sharp\text{-Mod} \rightarrow \FB_G\text{-Mod}$.\\
\end{theorem}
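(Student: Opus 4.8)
The plan is to exhibit the functor $M\colon \FB_G\text{-Mod}\to \FI_G\sharp\text{-Mod}$ as an equivalence by producing a candidate inverse and checking that the two composites are naturally isomorphic to the respective identity functors. We already have one composite for free: Proposition~\ref{mprop}(1), together with the fact that the forgetful functor $\FI_G\sharp\text{-Mod}\to\FI_G\text{-Mod}$ does not change the value of $H_0$, gives a natural isomorphism $H_0\circ M\cong \mathrm{id}_{\FB_G\text{-Mod}}$. So the entire content is in showing that the other composite $M\circ H_0$, now computed with $M$ valued in $\FI_G\sharp$-Mod, is naturally isomorphic to the identity functor on $\FI_G\sharp\text{-Mod}$.

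First I would construct, for an arbitrary $\FI_G\sharp$-module $V$, a natural candidate counit map $\varepsilon_V\colon M(H_0(V))\to V$. Since $M(H_0(V))=\bigoplus_{n\ge 0}M(H_0(V)_n)$, by the adjunction of Proposition~\ref{yoneda} such a map is the same data as, for each $n$, a $k[G_n]$-module map $H_0(V)_n\to V_n$; I would take the $G_n$-equivariant splitting of the quotient $V_n\twoheadrightarrow H_0(V)_n$ that is available because in an $\FI_G\sharp$-module one has the ``retraction'' morphisms $([n],\mathrm{incl},\mathrm{triv})\in\Hom_{\FI_G\sharp}([m],[n])$ going the ``wrong way'' — concretely, the presence of morphisms with underlying partial injection defined on a proper subset lets one project $V_n$ onto a complement of $V_{<n}$ in a functorial way. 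The key point is that the extra morphisms of $\FI_G\sharp$ (those $(A,f,g)$ with $A\subsetneq[n]$) force $V$ to split, degree by degree, as a sum of pieces supported in a single degree, compatibly with all of $\FI_G\sharp$. I would then check that $\varepsilon_V$ is a map of $\FI_G\sharp$-modules (not just $\FI_G$-modules), is natural in $V$, and is an isomorphism: surjectivity is Nakayama (Proposition~\ref{naklemma}(2)), and injectivity follows by the same $H_0$-and-Nakayama argument used in the proof of Proposition~\ref{projclass}, since applying $H_0$ to $0\to K\to M(H_0(V))\xrightarrow{\varepsilon_V} V$ and using $H_0\circ M\cong\mathrm{id}$ forces $H_0(K)=0$. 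Finally I would verify the triangle identities (or simply observe that a functor with a quasi-inverse on one side and a natural iso on the other is an equivalence), which is formal.

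I expect the main obstacle to be the careful construction of the $G_n$-equivariant section $H_0(V)_n\to V_n$ \emph{and} the verification that the resulting $\varepsilon_V$ respects all $\FI_G\sharp$-morphisms, including the ``partial'' ones that have no analogue in $\FI_G$. The subtlety is that the section must be chosen using the $\FI_G\sharp$ structure itself — picking an arbitrary $k$-linear splitting will not be compatible with transition maps — so one has to use the idempotent endomorphisms of $V_n$ coming from morphisms $[n]\to[n]$ in $\FI_G\sharp$ whose underlying set map has image of size $<n$, decompose $V_n$ into the corresponding isotypic-like summands, and track how a general morphism $(A,f,g)$ permutes these summands. Everything else — exactness of $M$, the identification $H_0\circ M\cong\mathrm{id}$, the Nakayama-style injectivity argument — is already available from Proposition~\ref{mprop} and Proposition~\ref{naklemma}, so it is really this single compatibility check that carries the proof. (Citing \cite{CEF} and \cite{JW} for the $G=1$ and $G=\Z/2\Z$ cases, the generalization is routine once the bookkeeping is set up.)
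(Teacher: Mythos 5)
Your proposal follows essentially the same route as the paper, which gives no argument of its own beyond asserting that the proofs of \cite{CEF} (for $G=1$) and \cite{JW} (for $G=\Z/2\Z$) carry over verbatim; those proofs are precisely the idempotent-decomposition argument you sketch, using the partial-identity morphisms $(S,\mathrm{incl},\mathbf{1}):[n]\rightarrow[n]$ of $\FI_G\sharp$ to split $V_n$ canonically into summands indexed by subsets of $[n]$, and your identification of where the real work lies (checking the splitting is respected by arbitrary morphisms $(A,f,g)$) is accurate. One small caution: the injectivity step cannot literally reuse the argument of Proposition \ref{projclass}, since that argument needs $V$ projective to keep the sequence exact after applying $H_0$ (for general $V$ one only gets $H_1(V)\rightarrow H_0(K)\rightarrow H_0(M(H_0(V)))\rightarrow H_0(V)\rightarrow 0$, which does not force $H_0(K)=0$); but the degreewise decomposition you construct in the first step already exhibits $\varepsilon_V$ as an isomorphism $\bigoplus_{S\subseteq[n]}H_0(V)_{|S|}\rightarrow V_n$ in each degree, so injectivity should be read off from that splitting directly rather than via Nakayama.
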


The two citations given prove the theorem in the cases where $G$ is the trivial group, and where $G = \Z/2\Z$, respectively. The proofs go through essentially word for word to prove Theorem \ref{equiv} in the general case.\\ 

Theorem \ref{equiv} can be considered the justification for the terminology $\sharp$-filtered from Theorem \ref{homacyclic}.\\

\subsection{The Shift Functor and Torsion}

The final piece we need from the basic theory of $\FI_G$-modules is the shift functor. This functor was heavily featured in both \cite{GL} and \cite{L}, and will be of great use to us in what follows.\\

\begin{definition}\label{shiftfunctor}
Let $\Sigma$ denote the endofunctor of $\FI_G$, which sends $[n]$ to $[n+1]$, and takes a map $(f,g):[n] \rightarrow [m]$ to the map $(f_{+},g_{+}):[n+1] \rightarrow [m+1]$ defined by
\[
f_{+}(x) = \begin{cases} f(x) & \text{ if $x \neq n+1$}\\ m+1 & \text{ otherwise} \end{cases} \hspace{1cm} g_{+}(x) = \begin{cases} g(x) & \text{ if $x \neq n+1$}\\ 1 & \text{ otherwise.} \end{cases}
\]
We define the \textbf{shift functor $\So$} with respect to $\Sigma$ to be the endofunctor of $\FI_G$-Mod
\[
\So V := V \circ \Sigma.
\]
For any integer $b \geq 1$ we set $\So_b$ to be the $b$-th iterate of $\So$.\\
\end{definition}

Shift functors were originally introduced in \cite{CEFN} in the case of $\FI$-modules, and have since seen use in various papers in the field (e.g. \cite{N}, \cite{GL}, \cite{NS}, \cite{L}). The following proposition collects many of the important properties of the shift functor.\\

\begin{proposition}\label{shiftprop}
The shift functor $\So$ enjoys the following properties:
\begin{enumerate}
\item $\So$ is exact;
\item if $V$ is generated in degree $\leq n$, then so is $\So V$;
\item If $W$ is any $k[G_n]$-module, then $\So(M(W)) = M(\Res_{k[G_{n-1}]}^{k[G_n]} W) \oplus M(W)$.\\
\end{enumerate}
\end{proposition}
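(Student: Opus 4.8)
The plan is to establish (1) first, then (3), and to deduce (2) from these. Part (1) is immediate: kernels and cokernels of $\FI_G$-modules are computed pointwise and $(\So V)_m = V_{m+1}$, so $\So$ sends a short exact sequence $0 \to V' \to V \to V'' \to 0$ to the sequence which in degree $m$ is $0 \to V'_{m+1} \to V_{m+1} \to V''_{m+1} \to 0$, still exact.

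For (3), fix $n$ and a $k[G_n]$-module $W$. The natural inclusions $[m] \hookrightarrow [m+1]$, paired with the trivial map to $G$, assemble into a natural transformation $\mathrm{id} \Rightarrow \So$, hence a map $\alpha : M(W) \to \So M(W)$; in degree $m$ this is the map $W \otimes_{k[G_n]} k[\Hom_{\FI_G}([n],[m])] \to W \otimes_{k[G_n]} k[\Hom_{\FI_G}([n],[m+1])]$ sending $w \otimes (f,g)$ to $w \otimes (\iota \circ f, g)$, where $\iota : [m] \hookrightarrow [m+1]$ is the standard inclusion. I claim $\alpha$ is split by the retraction $\beta : \So M(W) \to M(W)$ that sends $w \otimes (f,g)$ to itself when $m+1 \notin \im(f)$ — in which case $(f,g)$ is a morphism $[n] \to [m]$ — and to $0$ otherwise. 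That $\beta$ commutes with all induced maps is because every induced map of $\So M(W)$ has the form $M(W)(h_+,k_+)$, and $h_+$ preserves both the property ``image avoids the top point'' and the property ``image contains the top point''; and $\beta\circ\alpha = \mathrm{id}$ by inspection. Thus $\So M(W) \cong M(W) \oplus C$, where $C_m$ is the span of the $w \otimes (f,g)$ with $(f,g) \in \Hom_{\FI_G}([n],[m+1])$ and $m+1 \in \im(f)$. It then remains to identify $C$ with $M(\Res^{k[G_n]}_{k[G_{n-1}]} W)$, where $G_{n-1} \leq G_n$ is realized as the subgroup fixing the last coordinate and carrying the trivial $G$-label there. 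The comparison map sends a pure tensor $w \otimes (f',g') \in (\Res W) \otimes_{k[G_{n-1}]} k[\Hom_{\FI_G}([n-1],[m])]$ to $w \otimes (f,g) \in C_m$, where $(f,g) : [n] \to [m+1]$ extends $(f',g')$ by $f(n) = m+1$ and $g(n) = 1$. One checks that this is well defined over the tensor products (an element of $G_{n-1}$ extends to one of $G_n$ fixing $n$, and post-composition with it intertwines the two extensions), compatible with induced maps (the same ``extend by the top point'' bookkeeping), and bijective on the underlying sets of morphisms modulo the group actions (every $(f,g)$ with $f(a) = m+1$ is carried by an element of $G_n$ to one with $f(n) = m+1$, and that element, together with the label $g(a)$, is absorbed into $\otimes_{k[G_n]}$ and the restriction of $W$). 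This yields $C \cong M(\Res W)$, hence (3); note that $\So M(W)$ is then generated in degree $\leq n$.

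Given (1) and (3), part (2) is short: if $V$ is generated in degree $\leq n$, choose a surjection $\bigoplus_i M(n_i) \twoheadrightarrow V$ with every $n_i \leq n$; applying the exact functor $\So$ gives a surjection $\bigoplus_i \So M(n_i) \twoheadrightarrow \So V$, and by (3) each $\So M(n_i) \cong M(n_i) \oplus M(\Res k[G_{n_i}])$ is generated in degree $\leq n_i \leq n$, so $\So V$ is generated in degree $\leq n$.

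The main obstacle is the bookkeeping in (3): checking that the retraction $\beta$ and the identification $C \cong M(\Res W)$ are genuinely morphisms of $\FI_G$-modules. This means tracking how the wreath-product actions of $G_n$ and $G_{n-1}$, the induced-module tensor products over $k[G_n]$ and $k[G_{n-1}]$, and the operation of adjoining a $G$-label at the new top coordinate interact with composition in $\FI_G$. Once the conventions are pinned down these are routine verifications, but they carry all the content of the proof.
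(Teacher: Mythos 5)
Your proof is correct, and parts (1) and (2) are in the same spirit as the paper's (though you reverse the order, deducing (2) from (3) instead of proving (2) directly via an explicit computation with basis vectors of $M(m)_n$). For part (3), however, your argument is genuinely different from the paper's. The paper first invokes Theorem \ref{equiv} (the equivalence $\FB_G\text{-Mod} \simeq \FI_G\sharp\text{-Mod}$): it extends the shift endofunctor $\Sigma$ to an endofunctor $\Sigma_\sharp$ of $\FI_G\sharp$, which shows that $\So M(W)$ inherits an $\FI_G\sharp$-module structure and is therefore automatically relatively projective, i.e.\ of the form $M(W')$ for some $\FB_G$-module $W'$. It then identifies $W'$ by computing $H_0(\So M(W))$ in the two relevant degrees, using part (2) to know those are the only degrees that matter. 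Your proof instead stays entirely in $\FI_G$-Mod: you split the natural inclusion $\iota \colon M(W) \hookrightarrow \So M(W)$ by an explicit retraction $\beta$ defined in terms of whether the top point lies in the image, and then identify the complementary summand $C$ with $M(\Res^{G_n}_{G_{n-1}} W)$ by normalizing morphisms $(f,g)\colon [n]\to[m+1]$ with $m+1 \in \im f$ to have $f(n)=m+1$, $g(n)=1$, absorbing the normalizing automorphism and the $G$-label into the tensor product. Both approaches are sound. The paper's route is shorter once the $\FI_G\sharp$ machinery is available and automatically yields that the shift of a relatively projective module is relatively projective before any computation; yours is more elementary and self-contained, avoiding $\FI_G\sharp$ and Theorem \ref{equiv} entirely at the cost of heavier bookkeeping in verifying that $\beta$ and the comparison map $M(\Res W) \to C$ are $\FI_G$-morphisms and that the latter is an isomorphism of $k[G_{n-1}]$-balanced tensor products. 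You correctly identify exactly where the content of your argument lives — in those verifications — and the normalization argument (each $G_n$-orbit on $\{(f,g) : m+1 \in \im f\}$ has stabilizer exactly $G_{n-1} \subseteq G_n$ fixing $n$ with trivial label) is the right one.
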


\begin{proof}
Kernels and cokernels are computed point-wise by definition. It follows immediately from this that $S$ is exact.\\

If $0 \rightarrow K \rightarrow F \rightarrow V \rightarrow 0$ is a presentation for $V$, with $F$ free, then exactness of the shift functor implies that $0 \rightarrow \So K \rightarrow \So F \rightarrow \So V \rightarrow 0$ is exact as well. It therefore suffices to show that $\So(M(m))$ is generated in degree $\leq m$. Let $n > m+1$, and let $e_{(f,g)}$ be a canonical basis vector in $M(m)_n$. Let $h:[m] \rightarrow [m+1]$ be the injection which sends $f^{-1}(n)$ to $m+1$, if it exists, and is the identity elsewhere, and let $\widetilde{h}:[m] \rightarrow [n-1]$ be the injection which agrees with $f$ away from $f^{-1}(n)$, and sends $f^{-1}(n)$ to something outside the image of $f$. Finally, we let $\mathbf{1}:[m] \rightarrow G$ be the trivial map into $G$. Then we have
\[
\Sigma(\widetilde{h},\mathbf{1}) \circ (h,g) = (f,g)
\]
This shows that $\So M(m)$ is generated in degree $m$, as desired.\\

For the first part of final statement, Theorem \ref{equiv} implies that to show that $\So M(W)$ is relatively projective, it will suffice to show that it is an $\FI_G\sharp$-module. Let $(A,f,g):[m] \rightarrow [n]$ be a morphism in $\FI_G\sharp$. Then we may define an endofunctor $\Sigma_\sharp$ of $\FI_G\sharp$, which maps $[m]$ to $[m+1]$ and $\Sigma_\sharp(A,f,g) = (A \cup \{m+1\},f_+,g_+)$ where $f_+$ agrees with $f$ on $A$, and sends $m+1$ to $n+1$, and $g_+$ agrees with $g$ on $A$ and sends $m+1$ to the identity. Observe that $\Sigma_\sharp$ restricts to $\Sigma$ on $\FI_G \subseteq \FI_G\sharp$. In particular, the functor $\So$ can be extended naturally to a functor on $\FI_G\sharp$. This shows that $\So M(W)$ is relatively projective.\\

Once again applying Theorem \ref{equiv}, it remains to compute $H_0(\So M(W))$. The second part of this proposition implies that it suffices to compute $H_0(\So M(W))$ in degrees $m-1$ and $m$. It is clear from definition that $H_0(\So M(W))_{m-1} = \Res_{k[G_{m-1}]}^{k[G_m]} W$. A direct computation also shows that the transition maps originating from $\So M(W)_{m-1}$ will hit all pure tensors in $\So M(W)_m = M(W)_{m+1}$ except for those of the form $w \otimes (f,g)$ where $f^{-1}(m+1) = \emptyset$. The group $G_m$ now acts on these pure tensors in precisely the way it acts on $W$. In particular, $H_0(W)_m = W$, which concludes the proof.\\
\end{proof}

\begin{remark}
If $G$ is an infinite group, then shifts do not need to preserve finite generation. Indeed,
\[
\So M(m) = M(m-1)^{m \cdot |G|} \oplus M(m)
\]
by the above proposition.\\
\end{remark}

Note that the last two properties were proven for $\FI$-modules in \cite[Lemma 2.12]{CEFN} and \cite[Lemma 2.2]{N}. The next property of the shift functor which is important to us is its connection with torsion.\\

\begin{definition}
Let $V$ be an $\FI_G$-module. Fix $b \geq 0$, and let $(f_b^n,\mathbf{1}):[n] \rightarrow [n+b]$ denote the morphism in $\FI_G$ whose injection is the standard inclusion ($j$ maps to $j$ for all $j$), and whose $G$-map is the trivial map. Then the collection of the induced maps $(f^n_b,\mathbf{1})_\as:V_n \rightarrow V_{n+b}$ define a morphism of $\FI_G$-modules $\iota_b:V \rightarrow \So_b V$. We say that $V$ is \textbf{torsion free} if $\iota_b$ is injective for all $b$. Any element of $\sqcup_n V_n$ which appears in the kernel of some $\iota_b$ is called a \textbf{torsion} element of $V$. If every element of $V$ is torsion, then we say the module $V$ is itself torsion.\\
\end{definition}

The fact that every $\FI_G$-module maps into its shift will be used throughout this paper. One should observe that an $\FI_G$-module $V$ is torsion free if and only if $\iota := \iota_1$ is injective. Indeed, if $v \in V_n$ is in the kernel of some $(f_b^n,\mathbf{1})_\as$, then we write
\[
0 = (f_b^n,\mathbf{1})_\as(v) = (f_1^{n+b-1},\mathbf{1})_\as(f_{b-1}^{n},\mathbf{1})_\as(v)
\]
If $(f_{b-1}^{n},\mathbf{1})_\as(v) = 0$, then we repeat the above until we find a non-trivial element in the kernel of $(f_1^{a},\mathbf{1})_\as$ for some $a \geq n$. Also note that if $v \in V_n$ is in the kernel of some transition map, then it must in fact be in the kernel of some $\iota$ as well. Indeed, this follows from the fact that the action of $G_n$ on $\Hom_{\FI_G}([m],[n])$ is transitive.\\

\begin{lemma}\label{inddeg}
Let $V$ be an $\FI_G$-module, which is generated in degree $\leq m$ and related in degree $\leq r$. Then for any $b$, $\coker(V \rightarrow \So_b V)$ is generated in degree $< m$ and related in degree $< r$.\\
\end{lemma}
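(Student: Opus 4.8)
The plan is to prove the statement for general $b$ at once (rather than by iterating the $b=1$ case), first isolating a purely ``generating degree'' statement that needs no hypothesis on relations, and then feeding it into a snake–lemma argument applied to a presentation of $V$. \textbf{Step 1 (generating degree).} First I would show: for \emph{any} $\FI_G$-module $W$ generated in degree $\le d$, the cokernel of $\iota_b^W\colon W\to\So_b W$ is generated in degree $<d$. Taking the free cover $F=M(H_0(W))\twoheadrightarrow W$, which is generated in degree $\le d$ by Proposition \ref{naklemma}, and applying the exact functor $\So_b$ to $0\to W'\to F\to W\to 0$, a chase of the resulting ladder of short exact sequences shows $\coker(\iota_b^W)$ is a quotient of $\coker(\iota_b^F)$; so it suffices to treat $F$, hence a single $M(U)$ with $U$ a $k[G_n]$-module, $n\le d$. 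Here I would induct on $b$. Proposition \ref{shiftprop}(3) gives $\So M(U)\cong M(\Res^{k[G_n]}_{k[G_{n-1}]}U)\oplus M(U)$, and (reading the decomposition carefully, or using the $\FI_G\sharp$-structure below) the canonical map $M(U)\to\So M(U)$ is the inclusion of the second summand, so $\coker(\iota^{M(U)})\cong M(\Res^{k[G_n]}_{k[G_{n-1}]}U)$, generated in degree $\le n-1$. Factoring $\iota_b^{M(U)}=\iota_{b-1}^{\So M(U)}\circ\iota^{M(U)}$ yields an exact sequence $M(\Res^{k[G_n]}_{k[G_{n-1}]}U)\to\coker(\iota_b^{M(U)})\to\coker(\iota_{b-1}^{\So M(U)})\to 0$, and applying the inductive hypothesis to the two $M$-summands of $\So M(U)$ (generated in degrees $\le n-1$ and $\le n$) bounds both outer terms, hence the middle one, by $n-1<n$.

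\textbf{Step 2 (the main argument).} I would also record that $\coker(\iota_b^P)$ is relatively projective whenever $P$ is: by Theorem \ref{equiv}, $P$ underlies an $\FI_G\sharp$-module, the shift functor and the natural transformation $\iota$ extend to $\FI_G\sharp$-modules as in the proof of Proposition \ref{shiftprop}(3), and since cokernels there are computed pointwise, $\coker(\iota_b^P)$ again underlies an $\FI_G\sharp$-module. Now fix a presentation $0\to K\to M\to V\to 0$ with $M$ relatively projective generated in degree $\le m$ and $K$ generated in degree $\le r$. Applying $\So_b$ and taking the snake lemma of the ladder (vertical maps the $\iota_b$'s) gives the exact sequence
\[
\coker(\iota_b^K)\longrightarrow\coker(\iota_b^M)\longrightarrow\coker(\iota_b^V)\longrightarrow 0 .
\]
Set $C:=\coker(\iota_b^M)$; by Step 1 and the remark just made it is relatively projective and generated in degree $<m$. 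Let $I$ be the image of the first arrow; by Step 1 applied to $K$, the module $\coker(\iota_b^K)$, and hence its quotient $I$, is generated in degree $<r$. Then $\coker(\iota_b^V)=C/I$ is generated in degree $<m$, and $0\to I\to C\to\coker(\iota_b^V)\to 0$ is a presentation of $\coker(\iota_b^V)$ with relatively projective middle term and kernel generated in degree $<r$; this is exactly the asserted bound on both the generating and the relation degree.

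\textbf{Main obstacle.} The delicate point is Step 1, specifically the exact identification $\coker(M(U)\to\So M(U))\cong M(\Res^{k[G_n]}_{k[G_{n-1}]}U)$: one must see that the canonical inclusion really is a summand inclusion inside the decomposition of Proposition \ref{shiftprop}(3), not merely bound its cokernel's generating degree. Everything after that is formal — exactness of $\So_b$, the snake lemma, and the observation that the only module whose relative projectivity is needed (the middle term $C$) is a shift-cokernel of a \emph{relatively projective} module, so the relations of $K$ never enter beyond its generating degree.
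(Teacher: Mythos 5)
Your proof is correct and follows essentially the same route as the paper's: a presentation of $V$, exactness of $\So_b$, the identification of $\iota$ on a relatively projective $M(U)$ as the split inclusion of the summand $M(U)\hookrightarrow M(\Res^{k[G_n]}_{k[G_{n-1}]}U)\oplus M(U)$ from Proposition \ref{shiftprop}, and a diagram chase on cokernels. The only differences are organizational (you isolate the generating-degree statement and induct on $b$, where the paper iterates the $b=1$ splitting directly), and your explicit check that the middle term $\coker(\iota_b^M)$ is relatively projective is a worthwhile point the paper handles by taking $M$ free.
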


\begin{proof}
Looking through the proof of the second part of Proposition \ref{shiftprop}, one finds that the inclusion
\[
M(W) \hookrightarrow M(\Res_{k[G_{n-1}]}^{k[G_n]} W) \oplus M(W) = \So M(W)
\]
is exactly $\iota$. Let $F$ be a free module generated in degree $\leq m$ which surjects onto $V$. Then exactness of the shift functor implies we have the following commutative diagram with exact rows,
\[
\begin{CD}
\coker(K \rightarrow \So_b K) @>>> \coker(F \rightarrow \So_b F) @>>> \coker(V \rightarrow \So_b V)  @>>> 0\\
@AAA @AAA    @AAA @.\\
\So_b K @>>>\So_b F @>>> \So_b V  @>>> 0\\
@AAA @AAA @AAA @.\\
K @>>> F @>>> V  @>>> 0\\
\end{CD}
\]
The middle column is a split exact sequence $0 \rightarrow F \rightarrow Q \oplus F \rightarrow Q \rightarrow 0$, for some free module $Q$ generated in degree $< m$, by the previous remarks. This shows that $\coker(V \rightarrow \So_b V)$ is generated in degree $< m$. Therefore, $\coker(K \rightarrow \So_b K)$ is generated in degree $< r$. Because the rows of the above diagram are exact, we conclude that the relation degree of $\coker(V \rightarrow \So_b V)$ is $< r$.\\
\end{proof}

\subsection{$\sharp$-Filtered Objects and the First Half of Theorem \ref{homacyclic}}\label{sfil}

\begin{definition}\label{sfildef}
We say that an $\FI_G$-module $V$ is \textbf{$\sharp$-filtered} if it admits a filtration
\[
0 = V^{(0)} \subseteq \ldots \subseteq V^{(n-1)} \subseteq V^{(n)} = V
\]
whose cofactors are relatively projective.\\
\end{definition}

If $k$ is a field, and $G$ is a finite group, the dimension data of a finitely generated $\sharp$-filtered object is described by a single polynomial for all $n$. That is to say, the Hilbert function
\[
n \mapsto \dim_k V_n
\]
is a polynomial in $n$ for all $n$. Indeed, a direct computation verifies that for any finite dimensional $k[G_m]$-module $W$, 
\[
\dim_k M(W)_n = \binom{n}{m} \dim_k W 
\] 
for all $n \geq 0$.

\begin{theorem}[\cite{NS}]\label{polystab}
Assume that $G$ is a polycyclic-by-finite group, and let $V$ be a finitely generated $\FI_G$-module over a Noetherian ring $k$. Then for $b \gg 0$, $\So_b V$ is $\sharp$-filtered.\\
\end{theorem}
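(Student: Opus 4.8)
This is the theorem of Nagpal \cite{N} (for finite $G$) and Nagpal--Snowden \cite{NS}; in the body of the paper we in fact deduce the effective refinement Theorem \ref{shiftreg} from the theory of depth. The plan is as follows. The first move is to reduce to the torsion-free case. Since $k$ is Noetherian and $G$ is polycyclic-by-finite, the torsion submodule $V_{\mathrm{tors}} \subseteq V$ is finitely generated, hence generated by finitely many torsion elements, and therefore supported in degrees $\leq d_0$ for some $d_0 \geq 0$; thus $\So_b V_{\mathrm{tors}} = 0$ for $b > d_0$. As $\So$ is exact (Proposition \ref{shiftprop}), $\So_b V \cong \So_b(V / V_{\mathrm{tors}})$ for $b > d_0$, and $V/V_{\mathrm{tors}}$ is finitely generated and torsion-free, so we may assume $V$ is torsion-free.

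Next I would induct on the generating degree $d = \deg H_0(V)$. If $d \leq 0$, then $V$ is a torsion-free quotient of a finite direct sum of copies of $M(0)$, which forces its transition maps to be isomorphisms; such a module equals $M(V_0)$, and is in particular $\sharp$-filtered. For $d \geq 1$, consider the tautological sequence
\[
0 \longrightarrow V \xrightarrow{\ \iota\ } \So V \longrightarrow DV \longrightarrow 0,
\]
which is short exact because $V$ is torsion-free. By Lemma \ref{inddeg} the module $DV$ is generated in degree $< d$, and it is finitely generated. Splitting off its torsion as $0 \to (DV)_{\mathrm{tors}} \to DV \to \overline{DV} \to 0$, with $(DV)_{\mathrm{tors}}$ of finite degree and $\overline{DV}$ torsion-free of generating degree $< d$, the inductive hypothesis gives that $\So_c \overline{DV}$, and hence $\So_c(DV)$, is $\sharp$-filtered for $c \gg 0$. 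Since $D$ commutes with $\So$, this says $D(\So_c V)$ is $\sharp$-filtered for $c \gg 0$; replacing $V$ by a high shift (which is still torsion-free and still generated in degree $\leq d$ by Proposition \ref{shiftprop}), we reduce to the case in which $DV$ itself is $\sharp$-filtered.

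It remains to show that a torsion-free $V$ with $DV$ $\sharp$-filtered is $\sharp$-filtered; by Theorem \ref{homacyclic} (whose hypotheses are satisfied, as $\So$ preserves generation in finite degree) this amounts to $H_1(V) = 0$. Running the long exact homology sequence of the displayed sequence and using $H_i(DV) = 0$ for $i \geq 1$ gives $H_i(V) \cong H_i(\So V)$ for all $i \geq 1$, and iterating — each $\So_b(DV)$ is again $\sharp$-filtered — gives $H_1(V) \cong H_1(\So_b V)$ for all $b \geq 0$. The remaining point, and the main obstacle, is to show this common value vanishes. The efficient way to organize this — the route taken in Sections \ref{depthclass} and \ref{derv} — is to replace the homology functors by the left derived functors $H_i^{D^a}$ of the iterated derivative: there one has the clean identity $H_1^{D^a}(\So V) \cong \So H_1^{D^a}(V)$, a module generated in finite degree is $\sharp$-filtered precisely when all of the $H_1^{D^a}$ vanish, and so $\So_b V$ is $\sharp$-filtered as soon as $b$ exceeds $\dreg(V) = \sup_a \deg H_1^{D^a}(V)$. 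The theorem thus comes down to the finiteness of $\dreg(V)$, which is obtained by bounding $\deg H_1^{D^a}(V)$ uniformly in $a$ via the combinatorics of the derivative applied to a finite-degree relatively projective presentation of $V$.
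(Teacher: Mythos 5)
The statement you were asked to prove is one the paper itself does not prove: it appears with the attribution \cite{NS}, and the paper's own contribution is the effective refinement (Theorem \ref{shiftreg}), derived via the depth classification theorem, $\So_b H_1^{D^a}(V) \cong H_1^{D^a}(\So_b V)$ (Lemma \ref{shiftderv}), and the Church--Ellenberg bound on $\dwidth(V)$ (Theorem \ref{boundwidth}). So there is no ``paper proof'' of this statement to match against, and any argument you give is necessarily a different route.

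Your reduction strategy is sound: strip off the torsion submodule (finite degree by Noetherianity and finite generation), induct on generating degree, and use the tautological sequence $0 \to V \to \So V \to DV \to 0$ together with Lemma \ref{shiftderv} to replace $V$ by a high shift so that $DV$ is $\sharp$-filtered. The base case argument that a torsion-free quotient of $M(0)^{\oplus r}$ is $M(V_0)$ is also correct. The problem is the final step. You assert it ``remains to show that a torsion-free $V$ with $DV$ $\sharp$-filtered is $\sharp$-filtered,'' argue via the long exact homology sequence that $H_1(V) \cong H_1(\So_b V)$ for all $b$, and then declare the vanishing of this common value to be the main obstacle, pointing to the derived-regularity machinery and ``the combinatorics of the derivative.'' But if you are willing to invoke the finiteness of $\dreg(V)$ from Theorem \ref{boundwidth} and the depth classification theorem, then Theorem \ref{shiftreg} already gives the desired conclusion directly, with no induction at all; the scaffolding you built becomes dead weight. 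As written, the proposal is a reduction followed by a pointer to the very machinery the theorem is meant to feed into, not a self-contained argument.

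There is, however, a clean way to close your induction that does \emph{not} require Theorem \ref{boundwidth} or any Church--Ellenberg combinatorics, and you should use it. Once you have $V$ torsion-free with $DV$ $\sharp$-filtered, observe that $\sharp$-filtered modules are torsion-free, so $D^a V = D^{a-1}(DV)$ is torsion-free for every $a \geq 1$; combined with $V$ torsion-free this gives $H_1^D(D^a V) = 0$ for all $a \geq 0$. The Grothendieck exact sequence (\ref{groth}) with $i = 1$ reads
\[
0 \longrightarrow D H_1^{D^a}(V) \longrightarrow H_1^{D^{a+1}}(V) \longrightarrow H_1^D(D^a V) \longrightarrow 0,
\]
so $H_1^{D^{a+1}}(V) \cong D H_1^{D^a}(V)$, and starting from $H_1^D(V) = 0$ one gets $H_1^{D^a}(V) = 0$ for all $a$ by induction. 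Thus $\de(V) = \infty$, and the second equivalence of the depth classification theorem (whose proof in Section \ref{depthclass} uses only Nakayama, Lemma \ref{nagpal}, Lemma \ref{techlem}, and Theorem \ref{whomacyclic}, not the derived-width bound) gives that $V$ is $\sharp$-filtered. With that substitution your inductive argument becomes a genuine, essentially self-contained alternative proof, and indeed one much closer in spirit to the original argument of \cite{N} than to the effective route the paper takes via Theorem \ref{shiftreg}.
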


\begin{definition}
The \textbf{Nagpal number}, $N(V) \in \N \cup \{\infty\}$, of an $\FI_G$-module $V$ is the smallest value $b$ such that $\So_bV$ is $\sharp$-filtered.\\
\end{definition}

Note that in the context of this paper, it is not clear whether $N(V)$ is finite. The Nagpal-Snowden theorem tells us that this will be the case whenever $G$ is polycyclic-by-finite, $V$ is finitely generated, and $k$ is a Noetherian ring. One of the main results of this paper will be to show that $N(V)$ is finite in many other cases as well (see Theorem \ref{shiftreg}). The above discussion implies the following immediate corollary.\\

\begin{corollary}
If $G$ is a finite group, and $V$ is a finitely generated $\FI_G$-module over a field $k$, then there is a polynomial $P_V(x) \in \Q[x]$ such that the Hilbert function $H_V(n) = \dim_k V_n$ is equal to $P_V(n)$ for $n \geq N(V)$.\\
\end{corollary}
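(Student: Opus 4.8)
The plan is to deduce the corollary directly from Theorem \ref{polystab} together with the elementary Hilbert-function computation recorded just above it. First I would fix a finitely generated $\FI_G$-module $V$ over a field $k$ with $G$ finite, and set $b = N(V)$, which is finite by the Nagpal--Snowden theorem (Theorem \ref{polystab}) since a finite group is polycyclic-by-finite and $k$, being a field, is Noetherian. By definition of the Nagpal number, $\So_b V$ is $\sharp$-filtered, so it admits a finite filtration $0 = W^{(0)} \subseteq \ldots \subseteq W^{(t)} = \So_b V$ whose cofactors $W^{(j)}/W^{(j-1)}$ are relatively projective, i.e.\ of the form $M(U_j)$ for some $\FB_G$-module $U_j$. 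Because $V$ is finitely generated, so is $\So_b V$ (using that $G$ is finite, so the shift preserves finite generation — cf.\ Proposition \ref{shiftprop}(2) and the Noetherian property), and hence each $U_j$ is a finite-dimensional $k[G_{n_j}]$-module for appropriate $n_j$.

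Next I would invoke the displayed identity $\dim_k M(W)_n = \binom{n}{m}\dim_k W$, valid for all $n \geq 0$ when $W$ is a finite-dimensional $k[G_m]$-module. Summing over the cofactors of the filtration and using that dimension is additive on the point-wise exact sequences $0 \to W^{(j-1)}_n \to W^{(j)}_n \to M(U_j)_n \to 0$, I get
\[
\dim_k (\So_b V)_n = \sum_j \binom{n}{m_j}\dim_k U_j
\]
for all $n \geq 0$, where $m_j$ is the degree of the $\FB_G$-module $U_j$. The right-hand side is manifestly a polynomial $R(n) \in \Q[x]$ evaluated at $n$, valid for every $n \geq 0$. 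Since $(\So_b V)_n = V_{n+b}$ by the definition of $\So_b$, this says $H_V(n+b) = R(n)$ for all $n \geq 0$, i.e.\ $H_V(m) = R(m-b) =: P_V(m)$ for all $m \geq b = N(V)$, with $P_V(x) = R(x-b) \in \Q[x]$. This is exactly the asserted statement.

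The only genuine subtlety — and the step I expect to need the most care — is the passage from the $\sharp$-filtration of $\So_b V$ back to the Hilbert function of $V$ in the range $n \geq N(V)$: one must check that the polynomial agreement for $\So_b V$ holds for \emph{all} $n \geq 0$ (not merely $n \gg 0$), which is why it is important that the formula $\dim_k M(W)_n = \binom{n}{m}\dim_k W$ is exact on the nose for $n \geq 0$ rather than only asymptotically, and that a finite filtration by such modules therefore gives an exact polynomial formula with no error term. The finiteness of $\dim_k U_j$ (equivalently, that $\So_b V$ is finitely generated, which uses $G$ finite) is needed to ensure the coefficients are genuine rational numbers; everything else is bookkeeping with the additivity of dimension over short exact sequences. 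No compactness or asymptotic argument is required once Theorem \ref{polystab} is in hand.
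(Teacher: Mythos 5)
Your proof is correct and takes essentially the same route the paper intends: shift by $b = N(V)$ (finite by Theorem~\ref{polystab} since a finite $G$ is polycyclic-by-finite and a field is Noetherian), use the $\sharp$-filtration together with the exact formula $\dim_k M(W)_n = \binom{n}{m}\dim_k W$ to see that $\dim_k(\So_b V)_n$ is a polynomial in $n$ for all $n \geq 0$, and then translate via $(\So_b V)_n = V_{n+b}$. This is precisely the "above discussion" the paper invokes when calling the statement an immediate corollary.
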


\begin{remark}
Keeping in mind Theorems \ref{shiftreg} and \ref{fistab}, the above corollary provides a parallel between $\FI_G$-modules and graded modules over a polynomial ring. Namely, it is a consequence of the Hilbert Syzygy Theorem that the regularity of a graded module $M$ over a polynomial ring provides a bound to the obstruction of the Hilbert polynomial (See \cite{E} or \cite{E2}). The results of this paper therefore imply a similar relationship between the regularity of an $\FI_G$-module and bounds on its stable range. This might come as somewhat of a surprise, as Theorem \ref{homacyclic} implies that all non-$\sharp$-filtered modules require infinite resolutions; a stark contrast to the Hilbert Syzygy Theorem.\\
\end{remark}

This corollary was proven for $\FI$-modules over a field of characteristic 0 in \cite[Theorem 1.5]{CEF} and \cite[Theorem 5.1.3]{SS3}, and over an arbitrary field in \cite[Theorem B]{CEFN}. Following this, polynomial stability was proven in the case where $G = \Z/2\Z$ in \cite[Theorem 4.20]{JW}. It was proven for general $\FI_G$-modules in \cite[Theorem 10.1.2]{SS}. None of these sources used the Nagpal-Snowden Theorem in their work. Using the new homological invariants defined in this paper, we will be able to replace $n \geq N(V)$ in the above corollary with an explicit lower bound on $n$.\\

\begin{remark}
Although it is not proven in \cite{NS}, Theorem \ref{polystab} actually implies that the Grothendieck group $\mathcal{K}_0(\FI_G\text{-mod})$ is generated by the classes of torsion modules and relatively projective modules whenever $k$ is a Noetherian ring and $G$ is polycyclic-by-finite. Indeed, if $V$ is an $\FI_G$-module we have the exact sequence
\[
0 \rightarrow T(V) \rightarrow V \rightarrow V' \rightarrow 0
\]
where $V'$ is torsion free. Because $V'$ is torsion free, it embeds into all of its shifts. The Nagpal-Snowden theorem therefore implies that $V'$ embeds into a $\sharp$-filtered object, and Lemma \ref{inddeg} shows that the cokernel of this embedding is generated in strictly lower degree than $V$. Induction implies the desired result. This fact was proven for $\FI$-modules over a field of characteristic 0 by Sam and Snowden in \cite[Proposition 4.9.1]{SS3}. Note that we may also view this presentation of the Grothendieck group as a consequence of the classification theorem from Section \ref{depthclass}.\\
\end{remark}

At this point in the paper, we are ready to prove the first collection of equivalences guaranteed by Theorem \ref{homacyclic}.\\

\begin{theorem}\label{whomacyclic}
For an $\FI_G$-module $V$ which is generated in finite degree, the following are equivalent:
\begin{enumerate}
\item $V$ is $\sharp$-filtered;
\item There is a series of surjections $Q^{(n)} = V \twoheadrightarrow Q^{(n-1)} \twoheadrightarrow \ldots \twoheadrightarrow Q^{(0)} = 0$ whose successive kernels are relatively projective;
\item $V$ is homology acyclic;
\item $H_1(V) = 0$;\\
\end{enumerate}
\end{theorem}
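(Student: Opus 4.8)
The plan is to prove the cycle of implications $(1) \Rightarrow (2) \Rightarrow (3) \Rightarrow (4) \Rightarrow (1)$, with the last implication being the substantive one.

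For $(1) \Rightarrow (2)$: given a $\sharp$-filtration $0 = V^{(0)} \subseteq \ldots \subseteq V^{(n)} = V$, I would set $Q^{(i)} = V/V^{(i)}$. Then $Q^{(n)} = V$, $Q^{(0)} = 0$, the maps $Q^{(i)} \twoheadrightarrow Q^{(i-1)}$ are the obvious quotients, and the kernel of $Q^{(i)} \to Q^{(i-1)}$ is $V^{(i)}/V^{(i-1)}$, which is relatively projective by hypothesis. This is essentially a formal rewriting.

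For $(2) \Rightarrow (3)$: I would argue by induction on the length $n$ of the sequence of surjections, using the long exact sequence in homology. Write $P$ for the relatively projective kernel of $V = Q^{(n)} \twoheadrightarrow Q^{(n-1)}$, giving a short exact sequence $0 \to P \to V \to Q^{(n-1)} \to 0$. By Proposition \ref{mprop}(3) (applied after recalling that relatively projective modules are direct sums of $M(W)$'s), $H_i(P) = 0$ for $i \geq 1$ and $H_0(P) = H_0(\bigoplus M(W_j)) = \bigoplus W_j$; by induction $Q^{(n-1)}$ is homology acyclic, so $H_i(Q^{(n-1)}) = 0$ for $i \geq 1$. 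The long exact sequence then forces $H_i(V) = 0$ for $i \geq 2$ immediately, and gives $0 \to H_1(V) \to H_0(P) \to H_0(V) \to H_0(Q^{(n-1)}) \to 0$; one then checks that $H_0(P) \to H_0(V)$ is injective — this should follow because $P \hookrightarrow V$ realizes a subfiltration of a $\sharp$-filtration whose cofactor $Q^{(n-1)}$ is again generated in finite degree, but more directly one can note the splitting behavior of $H_0$ on the relatively projective bottom piece. The implication $(3) \Rightarrow (4)$ is immediate from the definition of homology acyclic.

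The hard part is $(4) \Rightarrow (1)$: from $H_1(V) = 0$ alone, produce a $\sharp$-filtration. The strategy I would use: pick a presentation $0 \to K \to M \to V \to 0$ with $M$ relatively projective (this exists since $V$ is generated in finite degree — though one must be careful that $K$ need not be finitely generated without the Noetherian hypothesis; fortunately the statement only assumes generated in finite degree and the relatively projective cover $M$ can be taken correspondingly, and the argument only needs $M$ relatively projective). Applying $H_0$ and using $H_1(M) = 0$, the long exact sequence gives $0 \to H_1(V) \to H_0(K) \to H_0(M) \to H_0(V) \to 0$, so $H_1(V) = 0$ means $H_0(K) \hookrightarrow H_0(M)$. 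Now I would invoke Theorem \ref{equiv}: since $M$ is an $\FI_G\sharp$-module, submodules corresponding to $\FB_G$-subobjects of $H_0(M)$ are again $\sharp$-filtered — concretely, one wants to show $K$ itself is $\sharp$-filtered (or at least that $V = M/K$ is). The key subtle point, borrowed from the Church--Ellenberg / Li--Yu circle of ideas, is that a submodule $K$ of a relatively projective module $M$ with the property that $H_0(K) \to H_0(M)$ is injective must itself be relatively projective: one builds a map $M(H_0(K)) \to K$ via Proposition \ref{yoneda}, shows it is surjective by Nakayama (Proposition \ref{naklemma}(2)), and then shows it is injective by the same $H_0$-bookkeeping used in Proposition \ref{projclass}, where injectivity of $H_0(K) \to H_0(M)$ is exactly what prevents new generators/relations from appearing. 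Granting that $K$ is relatively projective, $0 \subseteq K \subseteq M$ would be a $\sharp$-filtration of $M$... but we need one of $V$, not $M$. So instead I expect the correct route is to filter $M$ by a $\sharp$-filtration refining $K \subseteq M$: since $M = \bigoplus M(W_i)$ and $K$ is an $\FI_G\sharp$-submodule (this is the crux — one must check $K$ is closed under the $\sharp$-structure, equivalently that $H_0(K) \hookrightarrow H_0(M)$ together with Theorem \ref{equiv} forces $K$ to be the $\FI_G\sharp$-submodule generated by a sub-$\FB_G$-module), the quotient $V = M/K$ inherits a $\sharp$-filtration from the filtration of $M$ by degree. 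I anticipate that verifying $K$ is a sub-$\FI_G\sharp$-module — i.e.\ that the injectivity $H_0(K) \hookrightarrow H_0(M)$ propagates to the $\sharp$-closure — is the genuine obstacle, and is precisely where the argument "inspired by \cite[Section 3]{LY}" enters; everything else is diagram-chasing with the long exact sequence and Nakayama.
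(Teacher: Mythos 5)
Your handling of $(1)\Rightarrow(2)\Rightarrow(3)\Rightarrow(4)$ is fine (in $(2)\Rightarrow(3)$ you don't actually need injectivity of $H_0(P)\to H_0(V)$; the long exact sequence kills $H_i(V)$ for $i\geq 1$ directly). The problem is $(4)\Rightarrow(1)$. Your strategy is to take a presentation $0\to K\to M\to V\to 0$ with $M$ relatively projective, deduce $H_0(K)\hookrightarrow H_0(M)$ from $H_1(V)=0$, and then argue that $K$ is a sub-$\FI_G\sharp$-module of $M$, so that $V=M/K$ inherits an $\FI_G\sharp$-structure (hence a $\sharp$-filtration). But this would prove too much: a quotient of $\FI_G\sharp$-modules by a sub-$\FI_G\sharp$-module is again an $\FI_G\sharp$-module, hence \emph{relatively projective} by Theorem \ref{equiv} — not merely $\sharp$-filtered. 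Since the paper explicitly records (citing Nagpal and Gan--Li) that there exist $\sharp$-filtered modules that are not relatively projective, and any such $V$ satisfies $H_1(V)=0$, the claim that $K$ is closed under the $\FI_G\sharp$-action must fail in general. You correctly flagged this as "the genuine obstacle," but the resolution you hope for does not exist. A secondary issue: to build your comparison map $M(H_0(K))\to K$ via Proposition \ref{yoneda} you need $k[G_n]$-linear sections of $K_n\to H_0(K)_n$, which requires $H_0(K)_n$ to be projective over $k[G_n]$ — not something $H_0(K)\hookrightarrow H_0(M)$ gives you when $M$ is only relatively projective rather than projective.

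The paper avoids this trap entirely by not taking a presentation. Instead it constructs the cofiltration in (2) directly, one degree at a time: let $i$ be the least degree with $V_i\neq 0$, form the map $M(V_i)\to V$, let $I^{(n)}$ be its image and $Q^{(n-1)}=V/I^{(n)}$ the quotient. One shows $H_0(I^{(n)})\to H_0(V)$ is injective, which together with $H_1(V)=0$ forces $H_1(Q^{(n-1)})=0$, and iterates on $Q^{(n-1)}$ (this terminates since $V$ is generated in finite degree). Only after reaching the top — where $Q^{(1)}$ is generated in a single degree, so $K^{(1)}=0$ and $Q^{(1)}$ is visibly $M(W')$ — does one come back down the tower, using $H_2(Q^{(j)})=0$ at each stage to show the kernels $K^{(j)}$ of $M(\cdot)\to I^{(j)}$ vanish by Nakayama, so the cofiltration kernels $I^{(j)}$ are genuinely relatively projective. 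The crucial difference from your proposal is that no single short exact sequence carries the whole argument; the relative projectivity of the pieces is established only retroactively, by a backward induction along the degree filtration.
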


\begin{proof}
The third part of Proposition \ref{mprop} shows that the first two statement imply the third, and clearly the third implies the fourth.\\

Assume that $H_1(V) = 0$, and let $i$ be the least index such that $V_i \neq 0$. Then we may construct a map $M(V_i) \rightarrow V$, which is an isomorphism in degree $i$. Denote the kernel of this map by $K^{(n)}$, and its image by $I^{(n)}$. This leaves us with a pair of exact sequences,
\begin{eqnarray}
0 \rightarrow K^{(n)} \rightarrow M(V_i) \rightarrow I^{(n)} \rightarrow 0\label{hom1}\\ 
0 \rightarrow I^{(n)} \rightarrow V \rightarrow Q^{(n-1)} \rightarrow 0\label{hom2}
\end{eqnarray}
Applying $H_0$ to (\ref{hom1}), we find that $H_0(M(V_i))$ surjects onto $H_0(I^{(n)})$ and therefore these must be isomorphic. Indeed, $H_0(M(V_i))$ is zero everywhere but in degree $i$, where it is $V_i$, and $H_0(I^{(n)})$ must be $V_i$ in degree $i$ by construction. This shows that the map $H_0(I^{(n)}) \rightarrow H_0(V)$ is an injection. Applying $H_0$ to (\ref{hom2}) and using our assumption we obtain the exact sequence
\begin{eqnarray}
H_2(Q^{(n-1)}) \rightarrow H_1(I^{(n)}) \rightarrow 0 \rightarrow H_1(Q^{(n-1)}) \rightarrow H_0(I^{(n)}) \rightarrow H_0(V) \label{hom3}
\end{eqnarray}
By what was just discussed we may conclude that $H_1(Q^{(n-1)}) = 0$. We observe that the first degree $j$ for which $Q^{(n-1)}_j \neq 0$ will be strictly larger than $i$. This allows us to iterate the above process. Moreover, because $V$ was generated in finite degree, we know that the same is true about $Q$. This shows that $H_0(Q)$ is supported in precisely one less degree than $H_0(V)$. It follows from this that this process will eventually terminate. To finish the proof, it suffices by induction to show that $K^{(n)} = 0$. Once again looking at $H_0$ applied to (\ref{hom1}) we find
\[
0 \rightarrow H_1(I^{(n)}) \rightarrow H_0(K^{(n)}) \rightarrow H_0(M(V_i)) \rightarrow H_0(I^{(n)}) \rightarrow 0
\]
We have already discussed that the last map is an isomorphism, so $H_1(I^{(n)}) = 0$ if and only if $H_0(K^{(n)}) = 0$. In this case Nakayama's lemma would imply that $K^{(n)} = 0$. It therefore remains to show that $H_1(I^{(n)}) = 0$.\\

Note that at the final step in this construction we will be left with a sequence of the form
\[
0 \rightarrow K^{(1)} \rightarrow M(W') \rightarrow Q^{(1)} \rightarrow 0,
\]
where $W'$ is a $k[G_j]$-module for some $j$. Indeed, $H_0(Q^{(1)})$ is only supported in a single degree by assumption, and therefore the map $M(W') \rightarrow Q^{(1)}$ must actually be surjective by Nakayama's lemma. Applying $H_0$, and using the assumptions that $H_0(M(W')) \rightarrow H_0(Q^{(1)})$ is an isomorphism and $H_1(Q^{(1)}) = 0$, we conclude that $H_0(K^{(1)}) = 0$. It follows that $K^{(1)} = 0$, and therefore $Q^{(1)} = M(W')$ is homology acyclic by part three of Proposition \ref{mprop}. The first two terms in (\ref{hom3}) now imply that $H_1(I^{(2)}) = 0$. Proceeding inductively, we eventually reach the conclusion that $H_1(I^{(n)}) = 0$, as desired.\\

We have thus far shown that the second statement is equivalent to the third and fourth, and that the first statement implies these. It only remains to show that the second statement implies the first. Assume that $V$ admits a cofiltration as in the third statement of the theorem, and assume that the factors of this cofiltration are given by the collection $\{M(W_i)\}_{i=1}^g$, with $W_i$ a $k[G_i]$-module. We first observe that $H_0(V)$ is the $\FB_G$-module which is $W_i$ in degree $i$, and zero elsewhere. Indeed, this follows from how the $W_i$ were constructed above. For each $i$, let $\{w_{i,j}\}_{j=1}^{\kappa_1}$ be a generating set for $W_i$. Applying Nakayama's lemma we obtain a surjection
\[
\bigoplus_{i = 1}^g M(i)^{\kappa_i} \twoheadrightarrow V.
\]
Set $V'$ to be the submodule of $V$ generated by lifts of the $w_{i,j}$ with $i < g$. It remains to show that $V/V' = M(W_g)$.\\

Call $Q := V/V'$, and apply $H_0$ to the exact sequence
\[
0 \rightarrow V' \rightarrow V \rightarrow Q \rightarrow 0.
\]
By the previously proven equivalences, we are left with the sequence
\[
0 \rightarrow H_1(Q) \rightarrow H_0(V') \rightarrow H_0(V) \rightarrow H_0(Q) \rightarrow 0
\]
By construction, $H_0(V')$ is the module $H_0(V)$ with the term $W_g$ set to zero, and the map $H_0(V') \rightarrow H_0(V)$ is the obvious inclusion. This implies two things: $H_1(Q) = 0$, and $H_0(Q)$ is the module which is $W_g$ in degree $g$, and zero elsewhere.\\

The structure of $H_0(Q)$ implies that $Q$ is zero up to degree $g$, where it is $W_g$. The identity map on $W_g$ induces a surjection
\[
M(W_g) \twoheadrightarrow Q,
\]
which we claim is an isomorphism. Letting $K$ be the kernel of this map, and using the fact that $H_1(Q) = 0$, we are obtain an exact sequence
\[
0 \rightarrow H_0(K) \rightarrow H_0(M(W_g)) \rightarrow H_0(Q) \rightarrow 0.
\]
The final map is an isomorphism by construction, and so $H_0(K) = 0$. This concludes the proof.\\
\end{proof}

\begin{remark}
As was noted during the proof, we again observe that the filtration constructed above has the property that the $k[G_n]$-modules which appear in the cofactors $M(W)$ are precisely the non-trivial terms of $H_0(V)$.\\
\end{remark}

\begin{remark}
The theorem just proven is the first, and largest, part of Theorem \ref{homacyclic}. One may have noted that very little about the structure of $\FI_G$ specifically was used in the previous proof. Indeed, this theorem will hold for modules over many other categories. Examples of these categories include $\FI_d$, of finite sets with injections decorated by a $d$-coloring of the complement of their image, and VI, of finite vector spaces over a fixed finite field with injective linear maps. The interested reader should see \cite{GL}\cite{L}\cite{SS}\cite{PS} for more on modules over these categories.\\

It is natural for one to ask if we can prove the second half of Theorem \ref{homacyclic} in a more general context. The answer to this question no, and it is most easily illustrated by the following example of Jordan Ellenberg. Let $\Ca$ be the natural numbers, viewed as a poset category. The above theorem will hold in this category. One immediately finds that the $\Ca$-module $M(0)$ is the object which is $k$ in every degree, while $M(1)$ is the object which is 0 in degree 0, and $k$ in all other degrees. In particular, there is a natural embedding $M(1) \hookrightarrow M(0)$, whose cokernel is the object which is $k$ in degree 0, and 0 elsewhere. It is clear that this cokernel is not sharp filtered, and therefore we have a non $\sharp$-filtered object which admits a finite resolution by $\sharp$-filtered objects. It is an interesting question to ask for which categories one has the latter two equivalences of Theorem \ref{homacyclic}.\\
\end{remark}

One technical corollary to Theorem \ref{whomacyclic} is the following.\\

\begin{lemma}\label{needtwo1}
Given an exact sequence
\[
0 \rightarrow V' \rightarrow V \rightarrow V'' \rightarrow 0,
\]
of $\FI_G$-modules which are generated in finite degree such that $V''$ is $\sharp$-filtered, $V'$ is $\sharp$-filtered if and only if $V$ is $\sharp$-filtered.\\
\end{lemma}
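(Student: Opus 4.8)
The plan is to reduce the statement to the homological criterion of Theorem~\ref{whomacyclic}. Since all three modules $V'$, $V$, $V''$ are generated in finite degree, that theorem tells us that each of them is $\sharp$-filtered precisely when its first homology vanishes. So it suffices to prove the single equivalence: $H_1(V') = 0$ if and only if $H_1(V) = 0$.

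First I would use the hypothesis on $V''$: being $\sharp$-filtered and generated in finite degree, $V''$ is homology acyclic by Theorem~\ref{whomacyclic}, so $H_i(V'') = 0$ for all $i \geq 1$; in particular both $H_1(V'')$ and $H_2(V'')$ vanish.

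Next, apply the long exact sequence of homology associated to $0 \rightarrow V' \rightarrow V \rightarrow V'' \rightarrow 0$ (recall the $H_i$ are the left derived functors of the right-exact functor $H_0$). Its relevant segment
\[
H_2(V'') \longrightarrow H_1(V') \longrightarrow H_1(V) \longrightarrow H_1(V'')
\]
then reads $0 \rightarrow H_1(V') \rightarrow H_1(V) \rightarrow 0$, yielding an isomorphism $H_1(V') \cong H_1(V)$. Hence $H_1(V') = 0$ if and only if $H_1(V) = 0$, and applying Theorem~\ref{whomacyclic} once more, this time to $V'$ and to $V$ individually, converts this back into the desired equivalence of $\sharp$-filteredness.

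There is no serious obstacle here; the only points requiring care are that the generation-in-finite-degree hypothesis of Theorem~\ref{whomacyclic} is available for all three modules (it is part of the statement), and that homology acyclicity of $V''$ supplies the vanishing of $H_2(V'')$, not merely of $H_1(V'')$ — this is what makes the connecting map on the left end of the displayed segment harmless and lets the isomorphism go through in both directions.
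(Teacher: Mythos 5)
Your argument is correct and is essentially the paper's own proof: the paper likewise applies the long exact sequence in homology, uses $\sharp$-filteredness of $V''$ (via Theorem~\ref{whomacyclic}) to kill $H_i(V'')$ for $i \geq 1$, concludes $H_i(V') \cong H_i(V)$, and then invokes Theorem~\ref{whomacyclic} again. Your write-up just makes explicit the role of $H_2(V'')$ in supplying the leftmost vanishing, which the paper leaves implicit.
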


\begin{proof}
One applies the zeroth homology functor, and uses the fact that $V''$ is $\sharp$-filtered, to conclude that $H_i(V) = H_i(V')$ for all $i \geq 1$. Theorem \ref{whomacyclic} now implies the lemma.\\
\end{proof}

This fact was first proven in \cite[Proposition A.6]{D} in a much more general context. More recently, it was also proven in \cite[Corollary 3.6]{LY} for $\FI$-modules. In fact, the result proven in these papers is slightly stronger than that given above, as it includes the case where $V'$ and $V$ are known to be $\sharp$-filtered. We will provide a different proof of this strengthening as a consequence of the depth classification theorem in Section \ref{depthclass}.\\

\begin{remark}
The work thus far completed in this paper seems to indicate that $\sharp$-filtered objects are a fundamentally important class in $\FI_G$-mod. One observes that there is a chain of classes
\[
\text{Projective Objects} \subseteq \text{Relatively Projective Objects} \subseteq \text{$\sharp$-Filtered Objects}
\]
In the case where $k$ is a field of characteristic 0, the above inclusions are equalities by Proposition \ref{projclass}. However, in general the inclusions can be proper. For example, if $k$ is a field of characteristic $p > 0$, then an example of Nagpal \cite[Example 3.35]{N}, which was independently discovered by Gan and Li \cite[Section 3]{GL}, shows that there are $\sharp$-filtered objects which are not relatively projective. On the other hand, if $W$ is a non-projective $k[G_n]$-module for some $n$, then $M(W)$ is not projective. It becomes an interesting question to ask whether there is some homological criterion which separates $\sharp$-filtered from relatively projective.\\
\end{remark}

\section{The Church-Ellenberg Approach to Regularity}

\subsection{The Derivative and its Basic Properties}\label{derv}

\begin{definition}\label{dervdef}
Let $V$ be an $\FI_G$-module, and let $\iota$ denote the natural map $\iota:V \rightarrow \So V$. The \textbf{derivative} of $V$ is the $\FI_G$-module
\[
DV = \coker(\iota).
\]
For any $a \geq 0$, we define $D^a$ to be $a$-th iterate of $D$.\\
\end{definition}

Because $\So$ is exact, and because $DV$ is defined as a cokernel, it follows immediately that $D$ is a right exact functor.\\

\begin{definition}
We will follow \cite{CE} and write $H_i^{D^a}$ to denote the $i$-th left derived functor of $D^a$ for any $a \geq 1$.\\
\end{definition}

\begin{proposition}[\cite{CE}, Proposition 3.5 and Lemma 3.6]\label{dervprop}
The derivative functor $D$ enjoys the following properties:
\begin{enumerate}
\item for any $k[G_n]$-module $W$,
\[
D(M(W)) = M(\Res_{G_{n-1}}^{G_n} W);
\]
\item if $V$ is $\sharp$-filtered, then it is acyclic with respect to $D^a$ for all $a \geq 1$;
\item if $V$ is generated in degree $\leq m$, then $DV$ is generated in degree $\leq m-1$. Conversely, if $\deg(D^a V) \leq m$ for some $a,m$, then $V$ is generated in degree $\leq a + m$;
\item for any $\FI_G$-module $V$ there is an exact sequence
\[
0 \rightarrow H_1^D(V) \rightarrow V \stackrel{\iota}\rightarrow SV \rightarrow DV \rightarrow 0;
\]
\item $H_1^D(V) = 0$ if and only if $V$ is torsion free;
\item for any $\FI_G$-module $V$, $H_i^D(V) = 0$ for all $i > 1$;
\item if $\deg(V) \leq n$ then $\deg(DV) \leq n-1$, and $\deg(H_1^D(V)) \leq \deg(V)$.\\
\end{enumerate}
\end{proposition}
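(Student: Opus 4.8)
The plan is to prove the seven statements essentially in order, leaning on the key structural fact that $D$ is right exact together with Proposition \ref{shiftprop}, Proposition \ref{mprop}, and Theorem \ref{whomacyclic}. The backbone of the argument is statement (4): applying $D$ to the short exact sequence $0 \to \im(\iota) \to \So V \to DV \to 0$ and to $0 \to H_1^D(V) \to V \to \im(\iota) \to 0$ (where $H_1^D(V) := \ker\iota$ is by definition the first derived functor, computed via a free resolution, and one checks it agrees with $\ker(\iota)$ because free modules are torsion free) gives the four-term exact sequence; statements (5) and (6) then fall out, (5) being a restatement of the torsion-free criterion from the discussion after the torsion definition, and (6) following because $\So$ is exact so the long exact sequence for $D$ degenerates after the $H_1$ term.

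For statement (1), I would take the explicit description $\So M(W) = M(\Res W) \oplus M(W)$ from Proposition \ref{shiftprop}(3) and identify $\iota$ with the inclusion of the second summand $M(W)$ — this identification is exactly what was already extracted in the proof of Lemma \ref{inddeg} — so that the cokernel is $M(\Res_{G_{n-1}}^{G_n} W)$. Statement (2) is then immediate by dévissage: a $\sharp$-filtered module has a filtration with relatively projective cofactors, each of which is $D^a$-acyclic for all $a \ge 1$ by (1) (iterating, and using exactness of $\So$ hence of the relevant resolutions), and acyclicity is closed under extensions in a long exact sequence, so $V$ itself is $D^a$-acyclic; alternatively one invokes Theorem \ref{whomacyclic} to resolve by relatively projectives. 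For statement (3), the forward direction follows from (4): $DV$ is a quotient of $\So V$, which is generated in degree $\le m$ by Proposition \ref{shiftprop}(2). The converse — if $\deg(D^aV) \le m$ then $V$ is generated in degree $\le a+m$ — I would prove by induction on $a$, the base case $a=1$ using the four-term sequence of (4) to see that $V \twoheadrightarrow \im(\iota) \hookrightarrow \So V$ with $\coker(\iota) = DV$ generated in degree $\le m$, hence (chasing $H_0$ and using that $\So V$ in degree $n$ is built from $V$ in degrees $\le n+1$) that $V$ is generated in degree $\le m+1$.

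Statement (7) is the degree bound, and this is where I would be most careful. If $\deg(V) \le n$, then $(\So V)_m = V_{m+1}$ vanishes for $m+1 > n$, i.e.\ for $m \ge n$, so $\deg(\So V) \le n-1$; since $DV$ is a quotient of $\So V$ by (4), $\deg(DV) \le n-1$. For the bound on $H_1^D(V)$: from (5) and the four-term sequence, $H_1^D(V) = \ker(\iota: V \to \So V)$ is a submodule of $V$, hence $\deg(H_1^D(V)) \le \deg(V) \le n$.

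The main obstacle I anticipate is the bookkeeping in statement (2) and the converse half of statement (3): making the iteration $D^a = D \circ \cdots \circ D$ interact cleanly with derived functors requires either a Grothendieck spectral sequence argument (as in Proposition \ref{mprop}(3)) or a careful induction showing $\sharp$-filtered modules stay "$D$-acyclic" at each stage, and one must be sure that applying $D$ to a $\sharp$-filtered module again lands among modules that are $D^{a-1}$-acyclic — which uses (1) plus the fact, from Proposition \ref{shiftprop}(3), that $\So$ (hence $D$) preserves relative projectivity. I would handle this by first establishing (1) and (2) for a single relatively projective module, then noting $D$ of a relatively projective module is relatively projective, so the whole argument bootstraps; the degenerate spectral sequence then gives $H_i^{D^a} = 0$ on $\sharp$-filtered modules for all $i \ge 1$.
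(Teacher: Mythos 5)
The paper itself does not prove this proposition: it cites \cite{CE} (Proposition 3.5 and Lemma 3.6) and merely remarks that the $\FI$ arguments carry over verbatim to $\FI_G$, and that part (2) follows from the acyclicity of relatively projective modules. Your proposal reconstructs essentially that standard argument: identifying $\iota_{M(W)}$ with the split inclusion coming from Proposition \ref{shiftprop}(3) to get (1); bootstrapping (2) from the relatively projective case using the fact that $D$ preserves relative projectivity together with the filtration and the long exact sequence; and obtaining (4), (5), (6), and (7) from the snake lemma applied to the natural transformation $\iota:\mathrm{id}\to\So$ between exact functors, combined with torsion-freeness of projectives. All of that is sound, as is your $H_0$-chase for the converse half of (3).

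The one step that fails as written is the forward half of (3). Knowing that $DV$ is a quotient of $\So V$ and that $\So V$ is generated in degree $\leq m$ yields only that $DV$ is generated in degree $\leq m$, not the claimed $\leq m-1$. To get the drop in degree you must combine part (1) with the right exactness of $D$: choose a surjection $\bigoplus_i M(n_i)\twoheadrightarrow V$ with each $n_i\leq m$; applying $D$ gives a surjection $\bigoplus_i M\bigl(\Res^{G_{n_i}}_{G_{n_i-1}}k[G_{n_i}]\bigr)\twoheadrightarrow DV$, and each summand is generated in degree $n_i-1\leq m-1$. Equivalently, this is exactly the content of Lemma \ref{inddeg}, which records that $\coker(V\to\So V)$ is generated in degree strictly less than $m$ because $\iota$ surjects onto the second summand of $\So M(n_i)$. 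With that repair the proposal is complete.
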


\begin{remark}
In the cited paper, the authors only prove that relatively projective objects are acyclic with respect to $D^a$. Part 2 of the previous proposition actually follows immediately from this. Also note that the provided source only proves these statements for $\FI$-modules. The proofs are exactly the same.\\
\end{remark}

One of the main results of the cited paper was to prove that, in the case of $\FI$, the functors $H_i^{D^a}$ all had finite degree. They did this by providing an explicit bound on the degree in terms of certain invariants of $V$ \cite[Theorem 3.8]{CE}. We will eventually be able to do this as well in the case of $\FI_G$-modules.\\

Observe that Propositions \ref{dervprop}, \ref{shiftprop}, and \ref{projclass} imply that both $\So$ and $D$ preserve projective objects. This will allow us to call upon the Grothendieck spectral sequence in the following lemma.\\

\begin{lemma}\label{shiftderv}
There is a natural isomorphism of functors
\[
\So_b \circ D^a \cong D^a \circ \So_b
\]
for all $a,b \geq 1$. More generally, there are natural isomorphisms of functors
\[
\So_b \circ H_i^{D^a} \cong H_i^{D^a} \circ \So_b
\]
for all $b,a \geq 1$ and all $i \geq 0$.\\
\end{lemma}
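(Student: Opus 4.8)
The plan is to reduce everything to the single functor-level identity $\So\circ D\cong D\circ\So$, then bootstrap to $\So_b\circ D^a\cong D^a\circ\So_b$, and finally deduce the statement for the derived functors $H_i^{D^a}$.

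The first, and only delicate, step is to produce a \emph{natural} isomorphism $\So\circ D\cong D\circ\So$. Recall $D=\coker(\iota)$, where $\iota\colon\mathrm{Id}\Rightarrow\So$ is the natural transformation induced (by whiskering with $V$) from the natural transformation $\mathrm{Id}_{\FI_G}\Rightarrow\Sigma$ whose component at $[n]$ is $(f^n,\mathbf 1)$. Unwinding Definition \ref{shiftfunctor}, one finds that the two maps $\So V\to\So\So V$ that could both be called ``$\iota$'', namely $\iota_{\So V}$ and $\So(\iota_V)$, are \emph{not} literally equal: at degree $[n]$ they are induced by the two inclusions $[n+1]\hookrightarrow[n+2]$ with images $\{1,\dots,n,n+2\}$ and $\{1,\dots,n+1\}$ respectively. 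These differ by post-composition with the transposition $\tau_n=(n+1\ n+2)$, and the assignment $[n]\mapsto(\tau_n,\mathbf 1)$ is a natural automorphism $\theta$ of the endofunctor $\Sigma^2$ of $\FI_G$. Verifying this last point — that the underlying injections and the $G$-labels of $\theta_{[m]}\circ\Sigma^2(f,g)$ and $\Sigma^2(f,g)\circ\theta_{[n]}$ coincide for every $(f,g)$ — is the one genuinely non-formal computation in the argument, and I expect it to be the main obstacle. Granting it, $\iota_{\So V}=V(\theta)\circ\So(\iota_V)$ with $V(\theta)$ an isomorphism; since $\So$ is exact (Proposition \ref{shiftprop}) it commutes with cokernels, so $V(\theta)$ descends to a natural isomorphism $\So D V=\coker(\So(\iota_V))\xrightarrow{\ \sim\ }\coker(\iota_{\So V})=D\So V$.

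Next I would iterate: a straightforward induction (first on $a$, using $\So D\cong D\So$, then on $b$), together with the fact that natural isomorphisms compose, yields the natural isomorphism $\So_b\circ D^a\cong D^a\circ\So_b$ for all $a,b\geq 1$.

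Finally I would pass to derived functors. By Proposition \ref{shiftprop}(3) and Proposition \ref{projclass}, $\So$ — hence $\So_b$ — preserves projectives: if $W$ is a projective $k[G_n]$-module then $\Res_{k[G_{n-1}]}^{k[G_n]}W$ is projective because $k[G_n]$ is free over $k[G_{n-1}]$, so $\So M(W)=M(\Res_{k[G_{n-1}]}^{k[G_n]}W)\oplus M(W)$ is projective, and a general projective is a direct sum of such $M(W)$. Also $\So_b$ is exact. Thus, for any $V$ with projective resolution $P_\bullet$, the complex $\So_b P_\bullet$ is a projective resolution of $\So_b V$; hence $H_i^{D^a}(\So_b V)=H_i(D^a\So_b P_\bullet)$, while $\So_b H_i^{D^a}(V)=\So_b H_i(D^a P_\bullet)=H_i(\So_b D^a P_\bullet)$ since $\So_b$ is exact and so commutes with homology. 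The functor isomorphism of the previous step, applied termwise, identifies the complexes $\So_b D^a P_\bullet$ and $D^a\So_b P_\bullet$, hence their homology, giving the desired natural isomorphism $\So_b\circ H_i^{D^a}\cong H_i^{D^a}\circ\So_b$ for all $a,b\geq 1$ and $i\geq 0$. Equivalently, this last step is the degeneration of the Grothendieck spectral sequence for the composite $D^a\circ\So_b$, available precisely because $\So_b$ is exact and preserves projectives. Naturality in $V$ holds throughout by construction.
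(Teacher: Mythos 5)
Your proposal is correct and follows essentially the same route as the paper: the case $a=b=1$ is handled by observing that $\iota_{\So V}$ and $\So(\iota_V)$ differ by the natural automorphism of $\So_2$ induced by the transposition $(n+1,n+2)$ (the paper's $\tau_n$), whose naturality is exactly the computation the paper carries out with $(f_{++},h_{++})$, and the derived-functor statement is obtained from exactness of $\So_b$ plus preservation of projectives, which is the degenerate Grothendieck spectral sequence argument the paper uses. No gaps.
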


\begin{proof}
We begin with the first claim. It clearly suffices to show the statement in the case where $a = b = 1$. Let $V$ be an $\FI_G$-module, and let $\tau_n:V_{n+2} \rightarrow V_{n+2}$ denote the isomorphism induced by the transposition $(n+2,n+1)$ paired with the trivial map into $G$. We claim that this map induces an isomorphism $\So D(V)_n \cong D(\So V)_n$.\\

Reviewing how everything is defined, we see that on points
\begin{eqnarray*}
\So D(V)_n &=& V_{n+2}/\im((f^{n+1},\mathbf{1})_\as),\\
D(\So V)_n &=& V_{n+2}/\im((f^n_{+},\mathbf{1})_\as)
\end{eqnarray*}
where $f^{n+1}:[n+1] \rightarrow [n+2]$ is the standard inclusion, $f^n_+:[n+1] \rightarrow [n+2]$ is the map which is the identity on $[n]$ and sends $n+1$ to $n+2$, and where $\mathbf{1}$ is the trivial map into $G$. In particular,
\[
\tau_n(f^{n+1},\mathbf{1})_\as = ((n+2,n+1)f^{n+1},\mathbf{1})_\as = (f^n_{+},\mathbf{1})_\as.
\]
This shows that $\tau_n$ induces an isomorphism between $\So D(V)_n$ and $D( \So V)_n$, as desired. We claim that $\tau$ is actually a map of $\FI_G$-modules. Let $(f,h):[n] \rightarrow [m]$ be an map in $\FI_G$. Then the map induced in $D(\So V)$ will be the image under the quotient of $(f_{++},h_{++})_\as:V_{n+2} \rightarrow V_{m+2}$, where $f_{++}$ agrees with $f$ on $[n]$ and sends $n+i$ to $m+i$ for $1 \leq i \leq 2$, and $h_{++}$ agrees with $h$ on $[n]$ and sends both $n+1$ and $n+2$ to 1. On the other hand, the map induced on $\So D(V)$ will also be the image in the quotient of $(f_{++},h_{++})_\as$. Then,
\[
(f_{++},h_{++})_\as \tau_n = (f_{++}(n+1,n+2), h_{++})_\as = ((m+1,m+2)f_{++},h_{++})_\as = \tau_m(f_{++},h_{++})_\as.
\]
The fact that the collection of $\tau$ gives us a map of functors is easily checked.\\

The second statement is largely homological formalism. Let $H_i^{D^a \circ \So_b}$ denote the $i$-th left derived functor of $D^a \circ \So_b$, and similarly define $H_i^{\So_b \circ D^a}$. Because $\So_b$ is exact, the Grothendieck spectral sequences for both of these derived functors have a single row, or column, respectively. In particular,
\[
H_i^{D^a} \circ \So_b = H_i^{D^a \circ \So_b} = H_i^{\So_b \circ D^a} = \So_b \circ H_i^{D^a}.
\]
This concludes the proof.\\
\end{proof}

We have already discussed the fact that the modules $H_i^{D^a}(V)$ have finite degree for all $i$ and $a$ whenever $V$ is finitely generated. The previous lemma implies $\So_bV$ will be acyclic with respect to all derivative functors for $b$ sufficiently large. We will reinterpret this later in terms of depth.\\

Before we finish this section, we take a moment to point out the exact sequence relating the derived functors of varying derivatives. In particular, if one writes $D^a = D \circ D^{a-1}$, then $H_p^{D^a}$ can be computed using the Grothendieck spectral sequence. By part 6 of Proposition \ref{dervprop}, we know that this spectral sequence only has two columns. Thus,

\begin{eqnarray}
0 \rightarrow DH_{i}^{D^a}(V) \rightarrow H_i^{D^{a+1}}(V) \rightarrow H_1^D(H_{i-1}^{D^{a}}(V)) \rightarrow 0. \label{groth}
\end{eqnarray}
\text{}\\

\subsection{The Relationship Between the Derivative and Regularity}

\begin{definition} \label{dervreg}
Let $V$ be an $\FI_G$-module. We define the \textbf{derived regularity} of $V$ to be the quantity $\dreg(V) := \sup\{\deg(H_1^{D^{a}}(V))\}_{a=1}^\infty \in \N \cup \{-\infty, \infty\}$. If $V$ is acyclic with respect to $D^a$ for all $a$, then we set $\dreg(V) = -\infty$.\\

We also define the \textbf{derived width} of $V$ to be the quantity $\dwidth(V) = \sup\{\deg(H_1^{D^a}(V)) + a\}_{a=1}^\infty \in \N \cup \{-\infty,\infty\}$.\\
\end{definition}

We will find the derived width and regularity of a module to be of great importance in what follows. Our first goal will be to show that both of these quantities are bounded relative to one another.\\

\begin{proposition}\label{finwidth}
Let $V$ be an $\FI_G$-module which is presented in finite degree and generated in degree $\leq d$. Then,
\[
\dreg(V)+1 \leq \dwidth(V) \leq \dreg(V) + \max\{hd_1(V),d\}
\]
\end{proposition}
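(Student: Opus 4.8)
The plan is to prove the two inequalities separately, using the Grothendieck spectral sequence exact sequence \eqref{groth} as the engine for both. For the left inequality $\dreg(V)+1 \leq \dwidth(V)$: if $V$ is acyclic with respect to all $D^a$ then both sides are $-\infty$ and there is nothing to prove, so assume $V$ is not acyclic. Pick any $a \geq 1$ with $H_1^{D^a}(V) \neq 0$. Then $\deg(H_1^{D^a}(V)) + a \geq \deg(H_1^{D^a}(V)) + 1$, and taking the supremum over such $a$ gives $\dwidth(V) \geq \dreg(V) + 1$ immediately. (The only subtlety is to note that the supremum defining $\dwidth$ is taken over the same index set of nonzero terms; since $\deg$ of a nonzero finitely-supported module is $\geq 0$, adding $a \geq 1$ only increases the value.)

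For the right inequality, I would argue by induction on $a$ that $\deg(H_1^{D^a}(V)) + a \leq \dreg(V) + \max\{hd_1(V), d\}$ for all $a \geq 1$. The base case $a = 1$ is just $\deg(H_1^D(V)) \leq \dreg(V)$ (by definition of $\dreg$) together with $1 \leq \max\{hd_1(V),d\}$ — here one should check $\max\{hd_1(V),d\} \geq 1$, which holds as long as $V$ is nonzero and not generated purely in degree $0$; the degenerate cases can be handled directly. For the inductive step, write $D^{a+1} = D \circ D^a$ and feed $V$ into \eqref{groth} with index $i = 1$:
\[
0 \rightarrow D H_1^{D^a}(V) \rightarrow H_1^{D^{a+1}}(V) \rightarrow H_1^D(H_0^{D^a}(V)) \rightarrow 0.
\]
Note $H_0^{D^a}(V) = D^a V$. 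So $\deg(H_1^{D^{a+1}}(V)) \leq \max\{\deg(D H_1^{D^a}(V)),\ \deg(H_1^D(D^a V))\}$. By part 7 of Proposition \ref{dervprop}, $\deg(D H_1^{D^a}(V)) \leq \deg(H_1^{D^a}(V)) - 1$, and again by part 7, $\deg(H_1^D(D^a V)) \leq \deg(D^a V)$. Now $\deg(D^a V) \leq d - a$ by part 3 of Proposition \ref{dervprop} (since $V$ is generated in degree $\leq d$, each application of $D$ drops the generating degree, hence the degree, by one — more precisely one iterates the degree bound in part 7). Combining: $\deg(H_1^{D^{a+1}}(V)) + (a+1) \leq \max\{\deg(H_1^{D^a}(V)) + a,\ (d-a) + (a+1)\} = \max\{\deg(H_1^{D^a}(V)) + a,\ d+1\}$, and the inductive hypothesis bounds the first term by $\dreg(V) + \max\{hd_1(V),d\}$.

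The step I expect to be the real obstacle is getting the $hd_1(V)$ term to appear correctly rather than just $d$: the naive induction above seems to produce a bound involving $d+1$ or $d$ alone, which may be weaker than advertised when $hd_1(V) > d$ but is already fine, and when $hd_1(V) < d$ the bound $\max\{hd_1(V),d\} = d$ is what we want anyway. So I suspect the $hd_1(V)$ term enters through a more careful analysis of the $a = 1$ step: one wants $\deg(H_1^{D}(\cdot))$ or $\deg(D^a V)$ replaced by a sharper bound using the \emph{relation} degree, since by Remark \ref{reldegdef} the relation degree is controlled by $\max\{d, hd_1(V)\}$, and presentation-in-finite-degree is exactly the hypothesis that lets us invoke this. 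Concretely I would use a presentation $0 \to K \to M \to V \to 0$ with $M$ relatively projective generated in degree $\leq d$ and $K$ generated in degree $\leq \max\{d, hd_1(V)\}$, apply $D^a$, use that $M$ is $D^a$-acyclic (part 2 of Proposition \ref{dervprop}) to get $H_1^{D^a}(V) \cong H_0^{D^a}(K)/(\text{image}) $ type relations — more carefully, the long exact sequence gives $H_1^{D^a}(V) \hookrightarrow$ a subquotient controlled by $D^a K$ and $D^{a} M$ — and then bound $\deg(D^a K)$ via the generating degree of $K$. This routes the $hd_1(V)$ bound in through $K$, which is the natural place for it, and I would need to check the indexing in the long exact sequence for $D^a$ carefully since $D^a$ has derived functors in only finitely many degrees (iterating part 6 of Proposition \ref{dervprop}).
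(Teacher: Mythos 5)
Your first paragraph (the inequality $\dreg(V)+1\leq\dwidth(V)$) is correct and is essentially the paper's argument. The inductive step you propose for the upper bound, however, has a genuine error: the claim $\deg(D^aV)\leq d-a$ is false. Part 3 of Proposition \ref{dervprop} controls the \emph{generating} degree of $D^aV$, not its degree, and part 7 controls the degree only under the hypothesis $\deg(V)<\infty$, which fails for essentially every module of interest (for instance $DM(1)=M(0)$ has infinite degree). So the estimate $\deg(H_1^D(D^aV))\leq\deg(D^aV)$ is vacuous here. Worse, the torsion of $D^aV$ --- which is exactly what $H_1^D(D^aV)$ computes --- sits in degrees governed by the \emph{relation} data of $V$, not its generating degree: the truncation of $M(0)$ that is $k$ in degrees $<N$ and $0$ in degrees $\geq N$ is generated in degree $0$ with $hd_1=N$, yet $\deg(H_1^D)=N-1$. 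If your inductive step were valid it would give $\dwidth(V)\leq\max\{\deg(H_1^D(V)),d\}+1$, a bound making no reference to $hd_1(V)$; combined with Theorem \ref{shiftreg} this would say every torsion-free module generated in degree $\leq d$ becomes $\sharp$-filtered after $d+1$ shifts regardless of its relation degree, which is false.

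Your final paragraph does point at the correct mechanism --- it is the paper's --- but you stop short of the punchline, and you again reach for a bound on $\deg(D^aK)$, which suffers from the same degree-versus-generating-degree confusion. What the presentation actually delivers is vanishing, not a degree estimate: with $M$ relatively projective generated in degree $\leq d$ and $K$ generated in degree $\leq\max\{hd_1(V),d\}$, applying $D^a$ gives
\[
0=H_1^{D^a}(M)\rightarrow H_1^{D^a}(V)\rightarrow D^aK,
\]
and $D^aK$ is generated in degree $\leq\max\{hd_1(V),d\}-a$, hence is zero as soon as $a>\max\{hd_1(V),d\}$. So the upper bound on $\dwidth(V)$ is not obtained by estimating $\deg(H_1^{D^a}(V))$ term by term (no induction on $a$ is needed); it is obtained by restricting the set of $a$ that can contribute. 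Every $a$ with $H_1^{D^a}(V)\neq 0$ satisfies $a\leq\max\{hd_1(V),d\}$, while $\deg(H_1^{D^a}(V))\leq\dreg(V)$ holds for every $a$ by definition of $\dreg$; adding these two inequalities gives the stated bound at once.
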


\begin{proof}
The proposition is clear if $\dreg(V) = \infty$, so we assume not. By assumption there is a presentation
\[
0 \rightarrow K \rightarrow M \rightarrow V \rightarrow 0
\]
such that $M$ is generated in degree $\leq d$, and $K$ is generated in degree $\leq \max\{hd_1(V),d\}$ (See Remark \ref{reldegdef}). Applying $D^a$ to the above sequence, with $a > \max\{hd_1(V),d\}$, and applying the second and third part of Proposition \ref{dervprop}, it follows that $H_1^{D^a}(V) = 0$. Therefore, any $a$ for which $H_1^{D^a}(V) \neq 0$ must satisfy $H_1^{D^a}(V) + a \leq \dreg(V) + \max\{hd_1(V),d\}$.\\

It follows from the above work that we may find some $a$ such that $\deg(H_1^{D^a}(V)) = \dreg(V)$. Then,
\[
\dreg(V) +1= \deg(H_1^{D^a}(V)) + 1\leq \deg(H_1^{D^a}(V)) + a \leq \dwidth(V).
\]
This completes the proof.\\
\end{proof}

For the purposes of relating the derived regularity to the usual regularity of a module, we will need to know how the degrees of $H_i^{D^a}(V)$ depend on $i$ and $a$.\\

\begin{proposition}\label{regbound}
Let $V$ be an $\FI_G$-module which is presented in finite degree. Then, for all $a$ and $i \geq 1$,
\[
\deg(H_i^{D^a}(V)) \leq \dwidth(V)-1 + i - a,
\]
\end{proposition}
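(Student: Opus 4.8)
The plan is to prove this by induction on $i$, using the Grothendieck spectral sequence short exact sequence \eqref{groth} together with the degree-shifting properties of $D$ and $H_1^D$ from Proposition \ref{dervprop}.

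For the base case $i=1$, we must show $\deg(H_1^{D^a}(V)) \leq \dwidth(V) - a$ for all $a \geq 1$. This is essentially the definition of derived width: by Definition \ref{dervreg}, $\dwidth(V) = \sup_a\{\deg(H_1^{D^a}(V)) + a\}$, so $\deg(H_1^{D^a}(V)) + a \leq \dwidth(V)$ for every $a$, which rearranges to the claim. (One should note that $\dwidth(V)$ is finite here, since $V$ is presented in finite degree; this follows from Proposition \ref{finwidth}, and the statement is vacuous otherwise.)

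For the inductive step, fix $i \geq 2$ and assume the bound holds for $i-1$ and all $a$. Write $D^{a} = D \circ D^{a-1}$ and invoke the short exact sequence \eqref{groth}:
\[
0 \rightarrow D H_i^{D^{a-1}}(V) \rightarrow H_i^{D^a}(V) \rightarrow H_1^D\big(H_{i-1}^{D^{a-1}}(V)\big) \rightarrow 0.
\]
The degree of $H_i^{D^a}(V)$ is at most the maximum of the degrees of the two flanking terms. For the left term, part 7 of Proposition \ref{dervprop} gives $\deg(D H_i^{D^{a-1}}(V)) \leq \deg(H_i^{D^{a-1}}(V)) - 1$; but here I need to be careful — applying the inductive hypothesis in $a$ rather than $i$, I would instead run the induction jointly, or better, do a secondary induction on $a$ within each $i$. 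For the right term, part 7 of Proposition \ref{dervprop} gives $\deg(H_1^D(W)) \leq \deg(W)$ for any $W$, so $\deg(H_1^D(H_{i-1}^{D^{a-1}}(V))) \leq \deg(H_{i-1}^{D^{a-1}}(V)) \leq \dwidth(V) - 1 + (i-1) - (a-1) = \dwidth(V) - 1 + i - a$ by the inductive hypothesis on $i-1$. So the right term already satisfies the desired bound. The left term requires knowing $\deg(H_i^{D^{a-1}}(V)) \leq \dwidth(V) - 1 + i - (a-1) = \dwidth(V) + i - a$, which would give $\deg(D H_i^{D^{a-1}}(V)) \leq \dwidth(V) + i - a - 1 = \dwidth(V) - 1 + i - a$, exactly matching. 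This is the bound for $(i, a-1)$, so the natural structure is: induct on $i$; within each $i$, induct on $a$ downward from $a \gg 0$ (where $H_i^{D^a}(V) = 0$ because $V$ is presented in finite degree, by Proposition \ref{dervprop} parts 2 and 3), using \eqref{groth} to descend from $a$ to $a-1$ — wait, \eqref{groth} relates $a$ and $a+1$, so actually induct on $a$ upward: the base case $a$ large has $H_i^{D^a}(V) = 0$, and... that's backwards too. The cleanest framing: fix $i$, and prove the bound for all $a$ by induction on $i$ with the right term of \eqref{groth} handled by the $(i-1)$ hypothesis and the left term handled by... hmm.

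The main obstacle is exactly this bookkeeping: sorting out which induction (on $i$, on $a$, or on $i - a$, or on $a + i$) makes \eqref{groth} close up properly, since the left term $D H_i^{D^{a-1}}(V)$ refers to the same homological degree $i$ at a smaller derivative power. I expect the resolution is to induct on the single quantity $i$, and for fixed $i$ to observe that $H_i^{D^a}(V) = 0$ for $a$ sufficiently large (as $V$ is presented in finite degree), then descend: given the bound at level $(i, a+1)$, the sequence \eqref{groth} with $a$ replaced by $a+1$, $0 \to D H_i^{D^{a}}(V) \to H_i^{D^{a+1}}(V) \to H_1^D(H_{i-1}^{D^{a}}(V)) \to 0$, shows $D H_i^{D^a}(V)$ injects into something of bounded degree, but this controls $D H_i^{D^a}(V)$, not $H_i^{D^a}(V)$ itself. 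So one also needs part 4 of Proposition \ref{dervprop}, the exact sequence $0 \to H_1^D(W) \to W \to SW \to DW \to 0$ applied to $W = H_i^{D^a}(V)$, together with Lemma \ref{shiftderv} to identify $S H_i^{D^a}(V) \cong H_i^{D^a}(SV)$ — giving $\deg(H_i^{D^a}(V)) \leq \max\{\deg(DH_i^{D^a}(V)) + 1, \deg(H_1^D(H_i^{D^a}(V)))\}$, and $\deg(H_1^D(H_i^{D^a}(V))) \leq \deg(H_i^{D^a}(V))$ is unhelpfully circular unless one instead uses $H_1^D(H_i^{D^a}(V))$ has degree $\leq$ that of $H_i^{D^a}(V)$ but is torsion, so one tracks torsion degrees separately. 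I would expect the actual argument in the paper to be: induct on $a$, with \eqref{groth} expressing $H_i^{D^{a+1}}(V)$ in terms of $DH_i^{D^a}(V)$ (degree $\leq \deg H_i^{D^a}(V) - 1$, handled by the $a$-hypothesis) and $H_1^D(H_{i-1}^{D^a}(V))$ (degree $\leq \deg H_{i-1}^{D^a}(V)$, handled by the $i$-hypothesis via a nested induction on $i$), with the base case $a = 1$ being the definition of $\dwidth$ when $i = 1$ and requiring the $i=1$, $a=1$ bound plus part 7 of Proposition \ref{dervprop} otherwise — so it is a double induction on the pair $(i,a)$ ordered so that $(i-1, a)$ and $(i, a-1)$ both precede $(i,a)$, i.e. induction on $i + a$.
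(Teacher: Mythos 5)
Your final reading of the argument is the paper's: an induction driven by the exact sequence (\ref{groth}), with the $i=1$ case coming straight from the definition of $\dwidth(V)$ and both flanking terms of the sequence bounded via part 7 of Proposition \ref{dervprop}; your degree computations in the inductive step are exactly those in the paper. Two clean-ups: the paper inducts on $a$ alone, taking as hypothesis the bound for \emph{all} $i$ at level $a$, so that both $DH_i^{D^a}(V)$ and $H_1^D(H_{i-1}^{D^a}(V))$ appearing in the sequence for level $a+1$ fall under that single hypothesis --- no nested induction on $i$ (or on $i+a$) is required, although your ordering also terminates correctly. Also, the base case $(a=1,\ i>1)$ is handled by part 6 of Proposition \ref{dervprop} (namely $H_i^D(V)=0$ for $i>1$), not by part 7 as your last sentence suggests, and the detour through part 4 of Proposition \ref{dervprop} and Lemma \ref{shiftderv} is unnecessary.
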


\begin{proof}
This proposition was essentially proven in \cite[Theorem 3.8]{CE} for $\FI$-modules. We follow their general strategy here.\\

The exact sequence (\ref{groth}) implies that the first $a$ for which $V$ is not acyclic with respect to $D^a$ must have $H_1^{D^a}(V)  \neq 0$. If it is the case that $\dreg(V) = -\infty$, this implies that $V$ is acyclic with respect to $D^a$ for all $a$, and the above inequality holds. We may therefore assume that $\dreg(V)$ is finite. We proceed by induction on $a$. If $a = 1$, then Proposition \ref{dervprop} implies that $H_i^D(V) = 0$ for all $i > 1$, and the bound holds trivially in this case. If $i = 1$, then the bound follows from the definition of derived width.\\

Assume that the statements hold up to some $a$. We first note that the bound holds when $i = 1$ by the definition of derived width. We once again call upon (\ref{groth}) and write,
\[
0 \rightarrow DH_{i}^{D^a}(V) \rightarrow H_i^{D^{a+1}}(V) \rightarrow H_1^D(H_{i-1}^{D^{a}}(V)) \rightarrow 0.
\]
Assuming that $i > 1$, we know that
\begin{eqnarray*}
\deg(H_{i}^{D^a}(V)) &\leq& \dwidth(V)-1 + i - a\\
\deg(H_{i-1}^{D^{a}}(V)) &\leq& \dwidth(V)-1 + i - a -1.
\end{eqnarray*}
Applying part 7 of Proposition \ref{dervprop} to both of these inequalities, we find that
\begin{eqnarray*}
\deg(DH_{i}^{D^a}(V)) &\leq& \dwidth(V)-1 + i - a -1\\
\deg(H_1^D(H_{i-1}^{D^{a}}(V))) &\leq& \dwidth(V)-1 + i - a -1.
\end{eqnarray*}
This proves the claim.\\
\end{proof}

We finish this section by showing the connection between derived regularity, and the previously mentioned notion of regularity .\\

\begin{proposition}\label{homreg}
Let $V$ be an $\FI_G$-module which is presented in finite degree and has  $\leq r$. Then,
\[
hd_i(V) \leq \max\{\dwidth(V)-1,\max\{hd_1(V),d\}-1\} + i,
\]
for all $i \geq 1$. In particular,
\[
hd_i(V) \leq \dreg(V) + \max\{hd_1(V),d\} - 1.
\]\\
\end{proposition}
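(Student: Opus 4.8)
The plan is to prove the first displayed bound by induction on the generating degree $d$ of $V$; the ``in particular'' clause is then formal, because Proposition \ref{finwidth} gives $\dwidth(V)-1\le\dreg(V)+\max\{hd_1(V),d\}-1$ and $\reg(V)=\sup_{i\ge 1}(hd_i(V)-i)$. Throughout write $C:=\max\{\dwidth(V)-1,\max\{hd_1(V),d\}-1\}$, and note $\dreg(V)\le\dwidth(V)-1\le C$.

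The engine is a long exact sequence. Breaking the four term exact sequence $0\to H_1^D(V)\to V\xrightarrow{\iota}\So V\to DV\to 0$ of Proposition \ref{dervprop}(4) into two short exact sequences and taking long exact homology sequences --- equivalently, running the hyperhomology spectral sequence of the two term complex $[V\xrightarrow{\iota}\So V]$, whose homology is $DV$ in degree $0$ and $H_1^D(V)$ in degree $1$ --- does the job. Applying $\So$ to a projective resolution of $V$ and using Proposition \ref{shiftprop}(3) identifies $H_n(\So V)\cong H_n(V)\oplus\overline{H_n(V)}$ with $\iota_*$ the inclusion of the first summand, where for an $\FB_G$-module $W$ we write $\overline{W}$ for the module with $\overline{W}_m=\Res^{G_{m+1}}_{G_m}W_{m+1}$; hence the spectral sequence's ``error term'' is exactly $\overline{H_n(V)}$, and the resulting exact sequence
\[
\cdots\to H_{n+1}(DV)\to H_{n-1}(H_1^D(V))\to\overline{H_n(V)}\to H_n(DV)\to H_{n-2}(H_1^D(V))\to\cdots
\]
together with $\deg\overline{H_n(V)}=hd_n(V)-1$ (valid when $H_n(V)\neq 0$) yields, for all $n\ge 1$,
\[
hd_n(V)-1\le\max\bigl\{\deg H_n(DV),\ \deg H_{n-1}(H_1^D(V))\bigr\}.
\]

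The crux is to bound $\deg H_{n-1}(H_1^D(V))$, and here I would use the observation that $H_1^D(V)=\ker(\iota)$ has trivial transition maps: an element of $V_m$ killed by the standard inclusion $[m]\hookrightarrow[m+1]$ is killed by every non-bijective morphism out of $[m]$, since such a morphism factors through the standard inclusion after postcomposition with an automorphism of its target. Thus $H_1^D(V)$ is an $\FB_G$-module, of degree $\le\dreg(V)$. Combined with the elementary estimate $\reg(W)\le\deg W$ for $\FB_G$-modules $W$ --- for which $DW=\So W$ is the shift-down of $W$ and $H_1^D(W)=W$, so the very same exact sequence proves it by a double induction on $\deg W$ and $i$ --- this gives $\deg H_{n-1}(H_1^D(V))\le\dreg(V)+n-1\le C+n-1$. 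This is the heart of the matter: a priori one knows only a degree bound for $H_1^D(V)$, not anything about its higher homology, and blindly re-applying the exact sequence to $H_1^D(V)$ does not reduce the degree; it is precisely the triviality of its transition maps that makes the recursion terminate.

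For the induction proper, the base cases are $d=-1$ ($V=0$, nothing to prove) and $d=0$, where $DV=0$ and the exact sequence collapses to $hd_n(V)-1\le\deg H_{n-1}(H_1^D(V))\le C+n-1$. For $d\ge 1$ I would apply the inductive hypothesis to $DV$, which is presented in finite degree and generated in degree $\le d-1$: here one needs $\dwidth(DV)\le\dwidth(V)-1$ and $hd_1(DV)\le hd_1(V)-1$, which come respectively from comparing $D^a\circ D$ with $D^{a+1}$ via the Grothendieck spectral sequence as in (\ref{groth}), and from the $n=1$ instance of the exact sequence of the second paragraph. The inductive bound then reads $\deg H_n(DV)\le C-1+n$, and feeding both estimates into that inequality gives $hd_n(V)\le C+n$, with the cases $H_n(V)=0$ or $hd_n(V)\le 0$ immediate since $C+n\ge 0$. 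The remaining loose ends --- that $D$ preserves ``presented in finite degree'', and the monotonicity of $\dwidth$ and $hd_1$ under $D$ --- are routine consequences of the spectral-sequence machinery already assembled in this section.
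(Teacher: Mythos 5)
Your argument has a genuine gap at its engine: the identification $H_n(\So V)\cong H_n(V)\oplus\overline{H_n(V)}$ with $\iota_*$ the inclusion of the first summand is false. Take $G=1$ and $V$ the torsion module with $V_0=k$ and $V_n=0$ for $n\geq 1$. Then $\So V=0$, so $H_n(\So V)=0$ for every $n$, while $H_0(V)=k$ in degree $0$; in particular $\iota_*\colon H_0(V)\to H_0(\So V)$ is not even injective. The source of the error is that the splitting $\So M(W)\cong M(W)\oplus M(\Res_{k[G_{n-1}]}^{k[G_n]}W)$ of Proposition \ref{shiftprop} is not natural: only the inclusion of the first summand (namely $\iota$) is a natural transformation, not the projection onto it, so the differentials of $\So P_\bullet$, for $P_\bullet\to V$ a projective resolution, do not respect the termwise decomposition. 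At best one gets a short exact sequence of complexes $0\to H_0(P_\bullet)\to H_0(\So P_\bullet)\to \overline{H_0(P_\bullet)}\to 0$ with possibly nonzero connecting maps, and the homology of the quotient complex need not be $\overline{H_n(V)}$. Everything downstream of the splitting is therefore underived: the displayed long exact sequence, the central inequality $hd_n(V)-1\le\max\{\deg H_n(DV),\deg H_{n-1}(H_1^D(V))\}$, and the claim $hd_1(DV)\le hd_1(V)-1$ invoked in the inductive step. (Your observation that $H_1^D(V)=\ker\iota$ is an $\FB_G$-module of degree at most $\dreg(V)$ is correct and pleasant, and $\dwidth(DV)\le\dwidth(V)-1$ does follow from (\ref{groth}); but these do not repair the engine.)

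For comparison, the paper's proof avoids any statement about $H_n(\So V)$. It takes a projective resolution $\cdots\to M_1\to M_0\to V\to 0$ with syzygies $X_i$, normalized via Nakayama so that $\deg H_0(M_i)=\deg H_0(X_i)$, and shows inductively that $X_{i+1}$ is generated in degree $\le N+i+1$, where $N=\max\{\dwidth(V)-1,\max\{hd_1(V),d\}-1\}$. The step is: $D^{N+i+1}M_i=0$ because $M_i$ is generated in degree $\le N+i$, so applying $D^{N+i+1}$ to $0\to X_{i+1}\to M_i\to X_i\to 0$ identifies $D^{N+i+1}X_{i+1}$ with $H_1^{D^{N+i+1}}(X_i)=H_{i+1}^{D^{N+i+1}}(V)$, which vanishes by the degree bound of Proposition \ref{regbound}; then the injection $H_{i+1}(V)=H_1(X_i)\hookrightarrow H_0(X_{i+1})$ gives the bound. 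If you want to rescue your inductive scheme, you would need a correct substitute for the splitting --- for instance the long exact sequence coming from $0\to P_\bullet\to\So P_\bullet\to DP_\bullet\to 0$ together with independent control of $\deg H_n(\So V)$ --- and obtaining that control is essentially as hard as the original problem.
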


\begin{proof}
As with the previous proposition, this statement largely follows from the work in \cite[Theorem 3.9]{CE}. For the remainder of the proof, we fix $N := \max\{\dwidth(V)-1,\max\{hd_1(V),d\}-1\}$. We begin with a projective resolution of $V$,
\[
\ldots \rightarrow M_1 \rightarrow M_0 \rightarrow V \rightarrow 0.
\]
Writing this as a collection of short exact sequences, we have
\[
0 \rightarrow X_{i+1} \rightarrow M_i \rightarrow X_{i} \rightarrow 0
\]
for some modules $X_i$, with $X_0 = V$. By repeatedly applying Nakayama's lemma, we may assume without loss of generality that $\deg(H_0(M_i)) = \deg(H_0(X_i))$ for all $i$ Our first objective will be to prove the following claim:
\[
\deg(H_0(X_i)) \leq N + i
\]
for all $i \geq 1$. We prove this by induction on $i$. If $i = 1$, then
\[
\deg(H_0(X_1)) \leq \max\{hd_1(V),d\} + 1 \leq N + 1.
\]
Assume that $\deg(H_0(X_i)) \leq N + i$ for some $i \geq 1$, and consider the exact sequence
\begin{eqnarray}
0 \rightarrow X_{i+1} \rightarrow M_i \rightarrow X_{i} \rightarrow 0.\label{exact1}
\end{eqnarray}
By the assumption made at the beginning of the proof, we know that $M_i$ is generated in degree $\leq N + i$. Part 3 of Proposition \ref{dervprop} implies that $D^{N+i+1}M_i = 0$. Applying this functor to (\ref{exact1}), we therefore find
\[
0 \rightarrow H_1^{D^{N+i+1}}(X_i) \rightarrow D^{N+i+1}X_{i+1} \rightarrow 0,
\]
where we have used the fact that $M_i$ is projective to imply the leading zero. We know that $H_1^{D^{N+i+1}}(X_i) = H_{i+1}^{D^{N+i+1}}(V)$, and Proposition \ref{regbound} implies
\[
0 \geq \dwidth(V)-1 + i+1 - N - i - 1 \geq \deg(H_{i+1}^{D^{N+i+1}}(V)) = \deg(H_1^{D^{N+i+1}}(X_i)) = \deg(D^{N+i+1}X_{i+1}).
\]
The third part of Proposition \ref{dervprop} implies that $X_{i+1}$ is generated in degree $\leq N+i+1$, as desired. To finish the proof, one applies the zeroth homology functor to (\ref{exact1}) to obtain,
\[
0 \rightarrow H_1(X_{i}) \rightarrow H_0(X_{i+1}).
\]
This shows that $\deg(H_1(X_i)) \leq N + i + 1$. We know that $H_1(X_i) = H_{i+1}(V)$, finishing the proof of the proposition.\\
\end{proof}

\begin{remark}
One should note the following consequence of the above proof. If $V$ is an $\FI_G$-module which is presented in finite degree, and which has finite derived width, then $V$ admits a projective resolution whose every member is generated in finite degree.\\
\end{remark}

One application of this result is in bounding the regularity of $\FI_G$-modules with finite degree.\\

\begin{corollary}\label{finlen}
Let $V$ be an $\FI_G$-module with $\deg(V) < \infty$. Then,
\[
hd_i(V) \leq i + \deg(V) 
\]
\end{corollary}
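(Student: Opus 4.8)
The plan is to adapt the argument of Proposition \ref{homreg}, but without the hypothesis that $V$ is presented in finite degree (which is unavailable here, since a finite-degree module may have syzygies of infinite degree). Write $n = \deg(V)$. The first step is a finite-degree analogue of Proposition \ref{regbound}: if $\deg(V) \leq n$ then $\deg(H_j^{D^a}(V)) \leq n - a + j$ for all $a \geq 1$ and $j \geq 0$. I would prove this by induction on $a$. The case $a = 1$ is immediate from parts 6 and 7 of Proposition \ref{dervprop}, namely $\deg(DV) \leq n-1$, $\deg(H_1^D(V)) \leq n$, and $H_j^D(V) = 0$ for $j \geq 2$. For the step $a \to a+1$, the case $j = 0$ is $\deg(D^{a+1}V) \leq n-(a+1)$ (part 7 iterated), and for $j \geq 1$ one feeds the inductive bounds on $H_j^{D^a}(V)$ and $H_{j-1}^{D^a}(V)$ through the exact sequence (\ref{groth}) together with part 7 of Proposition \ref{dervprop} (which bounds $\deg(D(-))$ and $\deg(H_1^D(-))$), obtaining $\deg(H_j^{D^{a+1}}(V)) \leq n - (a+1) + j$. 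The only instance I will actually use is $j = i+1$, $a = n+i+1$, giving $\deg(H_{i+1}^{D^{n+i+1}}(V)) \leq 0$.

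Next I would fix a resolution $\cdots \to M_1 \to M_0 \to V \to 0$ in which each $M_i$ is a free $\FI_G$-module generated in the lowest possible degrees surjecting onto $X_i$, where $X_0 = V$ and $X_{i+1} = \ker(M_i \to X_i)$; such an $M_i$ exists by lifting a minimal-degree generating set of $X_i$ and applying Nakayama's lemma (Proposition \ref{naklemma}), and it has $\deg(H_0(M_i)) \leq \deg(H_0(X_i))$. Since free modules are relatively projective, each $M_i$ is homology acyclic (Proposition \ref{mprop}, part 3) and $D^a$-acyclic for all $a$ (Proposition \ref{dervprop}, part 2). Dimension-shifting along $0 \to X_{i+1} \to M_i \to X_i \to 0$ then yields $H_1^{D^a}(X_i) \cong H_{i+1}^{D^a}(V)$, and, from the long exact sequence in $H_\bullet$, injections $H_i(V) \hookrightarrow H_0(X_i)$ for $i \geq 1$.

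The core is an induction on $i$ showing $\deg(H_0(X_i)) \leq n + i$. The base case $i=0$ is $\deg(H_0(V)) \leq \deg(V)$. Assuming $\deg(H_0(X_i)) \leq n+i$, the module $M_i$ is generated in degree $\leq n+i$, so $D^{n+i+1}M_i = 0$ by part 3 of Proposition \ref{dervprop}; applying $D^{n+i+1}$ to $0 \to X_{i+1} \to M_i \to X_i \to 0$ and using the $D^{n+i+1}$-acyclicity of $M_i$ gives $D^{n+i+1}(X_{i+1}) \cong H_1^{D^{n+i+1}}(X_i) \cong H_{i+1}^{D^{n+i+1}}(V)$, which has degree $\leq 0$ by the first step. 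The converse direction of part 3 of Proposition \ref{dervprop} then forces $X_{i+1}$ to be generated in degree $\leq (n+i+1) + 0 = n+i+1$, i.e.\ $\deg(H_0(X_{i+1})) \leq n+i+1$, completing the induction. Combining with $H_i(V) \hookrightarrow H_0(X_i)$ gives $hd_i(V) = \deg(H_i(V)) \leq n + i = \deg(V) + i$ for $i \geq 1$, and the case $i = 0$ is trivial.

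I expect the main obstacle to be bookkeeping rather than anything conceptual: one must make sure the dimension-shifting isomorphisms $H_1^{D^a}(X_i) \cong H_{i+1}^{D^a}(V)$ are justified — this is exactly why the $M_i$ must be taken relatively projective, so that they are simultaneously acyclic for $H_\bullet$ and for every $D^a$ — and one must keep the two inductions (on $a$ inside the first step, on $i$ in the main argument) cleanly separated, being careful that the bound on $\deg(H_0(X_i))$ is what licenses the vanishing $D^{n+i+1}M_i = 0$ at the next stage. No Noetherian or finite-generation hypothesis enters anywhere, consistent with the statement of the corollary.
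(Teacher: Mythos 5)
Your proof is correct, and it follows the paper's strategy: bound the degrees of $H_j^{D^a}(V)$ for a finite-degree module, then dimension-shift through a minimal free resolution exactly as in the proof of Proposition \ref{homreg}. The paper is terser: it proves only the $j=1$ case of your first step (equivalently $\dwidth(V)-1 \leq \deg(V)$) and then invokes Proposition \ref{homreg} directly, whereas you re-run that proposition's induction in place. Two remarks. First, your stated worry that a finite-degree module might fail to be presented in finite degree (so that Proposition \ref{homreg} is inapplicable) is unfounded: if $\deg(V) \leq n$ and $M = M(H_0(V))$ surjects onto $V$ with kernel $K$, then $K_m = M_m$ for $m > n$, and since every morphism $(f,g) : [j] \to [m]$ with $j < m$ factors through some injection $[m-1] \to [m]$, $M_m$ is spanned by transition images from $M_{m-1}$; hence $K$ is generated in degree $\leq n+1$ and $V$ is presented in finite degree. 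Second, and in your favor, the paper's parenthetical claim that $hd_1(V) \leq \deg(V)$ is off by one — take $V$ concentrated in a single degree $n$; then $H_1(V)$ is concentrated in degree $n+1$ — and the correct bound is $hd_1(V) \leq \deg(V)+1$, which still feeds into Proposition \ref{homreg} because that statement involves $\max\{hd_1(V),d\} - 1$. Your unrolled argument sidesteps this slip entirely and is the cleaner version of what the paper intends.
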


\begin{proof}
To begin, we claim that $\dwidth(V)-1 \leq \deg(V)$. In particular, for all $a \geq 1$
\begin{eqnarray}
\deg(H_1^{D^a}(V)) \leq \deg(V) + 1 - a. \label{finlenderv}
\end{eqnarray}
We prove this claim by induction on $a$. If $a = 1$, then the bound holds by the fourth part of Proposition \ref{dervprop}. Assume we have proven the bound for some $a \geq 1$. The sequence (\ref{groth}) implies
\[
0 \rightarrow DH_1^{D^a}(V) \rightarrow H_1^{D^{a+1}}(V) \rightarrow H_1^D(D^aV) \rightarrow 0.
\]
The module $D^aV$ has degree at most $\deg(V)-a$, and therefore $H_1^D(D^aV)$ also has degree at most $\deg(V)-a$ by the last part of Proposition \ref{dervprop}. On the other hand, induction tells us that $\deg(H_1^{D^a}(V)) \leq \deg(V) - a + 1$, and therefore $\deg(DH_1^{D^a}(V)) \leq \deg(V)-a$ by the third part of Proposition \ref{dervprop}. The above exact sequence implies the claim.\\

To finish the proof, one simply notes that $hd_1(V) \leq \deg(V)$, and applies Proposition \ref{homreg}.\\
\end{proof}

This same bound is found in \cite[Theorem 1.5]{L} using different methods.\\

\subsection{Bounding the Derived Width and the Proof of Theorem \ref{finitereg}}

The purpose of this section is to generalize the methods of Church and Ellenberg to provide explicit bounds on the regularity in terms of the generating and relation degrees of the module $V$. More specifically, we will provide bounds on the derived width of $V$ and apply Proposition \ref{homreg}.\\

The notation used in this section traces its origins to \cite{CE}. Indeed, one may consider this section as an expository account of \cite[Section 2]{CE}, where we are careful in generalizing relevant definitions to $\FI_G$-modules.\\

\begin{definition}
Let $V$ be an $\FI_G$-module, and fix a pair of integers $i \leq n$. Then we write $V_{n - \{i\}}$ to denote the submodule of $V_n$ generated by images of induced maps of the form $(f_i,\mathbf{1})_\as$, where $\mathbf{1}:[n-1] \rightarrow G$ is the trivial map and $f_i:[n-1] \rightarrow [n]$ is the map
\[
f_i(x) = \begin{cases} x &\text{ if $x < i$}\\ x+1 &\text{ otherwise.}\end{cases}.
\]
\end{definition}

\begin{remark}
Because it will be important later, we note that, in fact, $V_{n-\{i\}}$ is the submodule of $V_n$ generated by images of maps of the form $(f_i,g)$, where $g:[n-1] \rightarrow G$ is any map. Indeed, this follows from the identity
\[
(f_i,g)_\as(v) = (f_i,\mathbf{1})_\as ((id,g)_\as(v)).
\]
\text{}\\
\end{remark}

With this notation it is immediate that for any $a \geq 1$,
\[
D^aV_n = V_{n+a} / \sum_{i=1}^a V_{n+a - \{n+i\}}.
\]

Let $0 \rightarrow K \rightarrow M \rightarrow V \rightarrow 0$ be a presentation for $V$. Then applying $D^a$ it follows that
\[
H_1^{D^a}(V) = \ker(K_{n+a} / \sum_{i=1}^a K_{n+a - \{n+i\}} \rightarrow M_{n+a} / \sum_{i=1}^a M_{n+a - \{n+i\}}).
\]
This implies the following proposition.\\

\begin{proposition}\label{ecombi}
The degree of $H_1^{D^a}(V)$ is the smallest integer $m$ such that
\[
K_{n+a} \cap \sum_{i=1}^a M_{n+a - \{n+i\}} = \sum_{i=1}^a K_{n+a - \{n+i\}}
\]
for all $n > m$.\\
\end{proposition}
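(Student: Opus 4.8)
The plan is to unwind the description of $H_1^{D^a}(V)$ obtained just before the statement and translate the vanishing of the $\FB_G$-module $H_1^{D^a}(V)$ in degrees $> m$ into the stated containment of submodules. Recall that from the presentation $0 \to K \to M \to V \to 0$ and the right exactness of $D^a$ together with part 6 of Proposition \ref{dervprop} (so that $H_i^{D^a}$ is computed from the two-term complex $K \to M$ after applying $D^a$), we have
\[
H_1^{D^a}(V)_n = \ker\!\Big( K_{n+a}\big/\textstyle\sum_{i=1}^a K_{n+a-\{n+i\}} \;\longrightarrow\; M_{n+a}\big/\textstyle\sum_{i=1}^a M_{n+a-\{n+i\}} \Big).
\]
The first step is therefore purely a diagram-chase identification of this kernel: an element of $K_{n+a}$ maps to zero in the quotient of $M_{n+a}$ precisely when it lies in $\sum_{i=1}^a M_{n+a-\{n+i\}}$, so the kernel is
\[
\Big(K_{n+a}\cap \textstyle\sum_{i=1}^a M_{n+a-\{n+i\}}\Big)\Big/\textstyle\sum_{i=1}^a K_{n+a-\{n+i\}},
\]
where we use that $K$ is a submodule of $M$, so $\sum_i K_{n+a-\{n+i\}} \subseteq K_{n+a}\cap \sum_i M_{n+a-\{n+i\}}$ and the quotient makes sense. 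This is a straightforward application of the fact that kernels and cokernels in $\FI_G\Mod$ are computed pointwise.

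The second step is to record what it means for this pointwise module to vanish. By definition, $\deg(H_1^{D^a}(V))$ is the supremum of the $n$ with $H_1^{D^a}(V)_n \neq 0$; equivalently it is the smallest $m$ such that $H_1^{D^a}(V)_n = 0$ for all $n > m$. By the identification above, $H_1^{D^a}(V)_n = 0$ is exactly the assertion that the inclusion $\sum_{i=1}^a K_{n+a-\{n+i\}} \subseteq K_{n+a}\cap \sum_{i=1}^a M_{n+a-\{n+i\}}$ is an equality. Stringing these two equivalences together gives precisely the statement: $\deg(H_1^{D^a}(V))$ is the smallest $m$ with
\[
K_{n+a}\cap \textstyle\sum_{i=1}^a M_{n+a-\{n+i\}} \;=\; \textstyle\sum_{i=1}^a K_{n+a-\{n+i\}}
\]
for all $n > m$.

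I do not expect a serious obstacle here; the content is entirely in correctly invoking the pointwise formula $D^aV_n = V_{n+a}/\sum_{i=1}^a V_{n+a-\{n+i\}}$ (noted immediately before the proposition) and the exactness properties of $D^a$ coming from Proposition \ref{dervprop}. The one point that deserves a sentence of care is why the derived functor $H_1^{D^a}$ is computed from the complex obtained by applying $D^a$ to $0 \to K \to M \to V \to 0$ with $M$ relatively projective: this uses that relatively projective modules are $D^a$-acyclic (part 2 of Proposition \ref{dervprop}), so $M$ may be used in place of a projective resolution for the purpose of computing $H_1^{D^a}$, and the long exact sequence of derived functors then yields the displayed kernel description. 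Everything else is bookkeeping with the definitions of $V_{n-\{i\}}$ and of $\deg$.
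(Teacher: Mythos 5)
Your proof is correct and matches the paper's (implicit) reasoning: the paper presents Proposition \ref{ecombi} as an immediate consequence of the pointwise formula for $D^a$ together with the kernel description $H_1^{D^a}(V) = \ker(D^aK \to D^aM)$, which holds because the relatively projective $M$ is $D^a$-acyclic, and you have filled in the straightforward diagram chase identifying that kernel degreewise. One minor slip: $H_1^{D^a}(V)$ is an $\FI_G$-module (not an $\FB_G$-module), though its degree is finite by the later results of Section 3; this does not affect the argument.
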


This reformulation was first observed by Church and Ellenberg in \cite{CE}. In that paper, it is shown that bounding the value $m$ in the above proposition is deeply rooted in the combinatorics of $\FI$-modules. For the remainder of this section, we show that the techniques of \cite{CE} can be applied to $\FI_G$.\\

\begin{definition}
Fix a non-negative integer $n$, and let $i \neq j$ be elements of $[n]$. Then we set 
\[
J_i^j = (id,\mathbf{1}) - ((i,j),\mathbf{1}) \in \Z[G_n],
\]
where $\mathbf{1}$ is the trivial map into $G$. For any non-negative integer $m$, we define $I_m$ to be the ideal of $\Z[G_n]$ generated products of the form
\[
J_{i_1}^{j_1}\cdots J_{i_m}^{j_m}
\]
where all of the indices are distinct elements of $[n]$.\\

For any non-negative integer $b$, we write $\Sigma(b)$ to denote the collection of $b$-element subsets $S \subseteq [2b]$ such that the $i$-th smallest element of $S$ is at most $2i-1$. For any $1 \leq a \leq b$ we write $\Sigma(a,b) = \{S \in \Sigma(b) \mid [a] \subseteq S\}$.\\

For any $S \in \Sigma(b)$, we may write its entries in increasing order as $s_1 < \ldots < s_b$, and we may write the entries of its compliment in increasing order as $t_1 <\ldots < t_b$. Then define
\[
J_S := \prod_i J_{s_i}^{t_i}.
\]
\end{definition}

The paper \cite{CE} spends some time discussing the interesting combinatorial properties of $\Sigma(a,b)$, and its connection with the Catalan numbers. One observation they make is the following.\\

\begin{lemma}[\cite{CE}, Section 2.1]\label{combi}
Let $S \subseteq [2b]$ have $b$ elements, and write its elements in increasing order as $s_1 < \ldots < s_b$. Assume that $U \subseteq [n]$ is such that $S \subseteq U$. For any $b$ distinct elements $i_1 < \ldots < i_b$ in $[n] - U$, write $H$ for the subgroup of $\Sn_n$ generated by the disjoint transpositions $(i_p,s_p)$. Then,
\[
S \in \Sigma(b) \implies \text{ $U$ is lexicographically first among $\{\sigma \cdot U \mid \sigma \in H\}$.}
\]
\text{}\\
\end{lemma}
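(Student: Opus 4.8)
The plan is to unwind the definitions and reduce the statement to a purely elementary comparison of subsets of $[n]$ under a single cyclic-like swap, then handle the general product of disjoint transpositions by an inductive argument on $b$. First I would fix the setup: write $U \subseteq [n]$ with $S = \{s_1 < \ldots < s_b\} \subseteq U$, pick distinct $i_1 < \ldots < i_b$ in $[n] - U$, and let $H = \langle (i_1,s_1), \ldots, (i_b,s_b)\rangle$, which (the transpositions being disjoint) is an elementary abelian $2$-group of order $2^b$. An element $\sigma \in H$ is determined by a subset $T \subseteq [b]$: it swaps $i_p \leftrightarrow s_p$ for $p \in T$ and fixes the other pairs. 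Applying such $\sigma$ to $U$ removes $\{s_p : p \in T\}$ and inserts $\{i_p : p \in T\}$. So the claim becomes: for every nonempty $T$, the set $\sigma_T \cdot U$ is lexicographically \emph{after} $U$, i.e. $U$ is lexicographically least in its $H$-orbit, exactly when $S \in \Sigma(b)$.

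Here one must pin down the convention for "lexicographically first" among subsets of $[n]$ used in \cite{CE}: I would take the convention that a $k$-subset $A$ precedes $B$ if, listing both in increasing order, at the first position where they differ $A$ has the smaller entry (equivalently, $A$ is smaller in the colex or lex order on increasing strings — I'd check which one \cite{CE} uses and match it). The key observation is then a local one: if $p$ is the \emph{smallest} index in $T$, then swapping $s_p \to i_p$ replaces an element $s_p \in U$ by $i_p \notin U$, and because $i_p \notin U$ while all elements of $U$ that are $< s_p$ are unchanged, the comparison between $U$ and $\sigma_T\cdot U$ is decided by where $i_p$ lands relative to $s_p$ and to the other elements of $U$ near $s_p$. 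The defining inequality of $\Sigma(b)$, namely $s_p \leq 2p-1$ for all $p$, is precisely what guarantees (given that the $i_q$'s and the elements of $U$ interleave in the worst possible way allowed) that $i_p > s_p$ cannot be forced to help — I would show that $S\in\Sigma(b)$ forces every such swap to \emph{increase} the set, and conversely that if $s_p > 2p-1$ for some $p$ then one can choose the $i_q$ (consistent with $i_q \notin U$) so that some swap decreases $U$; this converse direction is why the hypothesis is stated as an iff in \cite{CE}, though for the present lemma only the forward implication is needed.

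The main obstacle I anticipate is bookkeeping the interaction between the positions of the inserted elements $i_1 < \ldots < i_b$ (which lie outside $U$, hence can be interleaved among the $s_q$ in many ways) and the condition $s_p \le 2p-1$: one has to argue that among the first $2p-1$ positions of $[n]$ there are at most $p-1$ "available" slots for the $i$'s below $s_p$, so that replacing $s_p$ by any admissible $i_p$ pushes it to a position that makes the set lex-larger, and that this first discrepancy dominates the comparison regardless of what happens at the larger indices $q > p$ in $T$. I would make this precise by induction on $b$: strip off the pair $(i_1,s_1)$ or the smallest element involved, apply the inductive hypothesis on $[n] \setminus \{s_1, i_1\}$ with the shifted condition, and glue. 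Since the lemma is quoted from \cite{CE} essentially verbatim (the group $G$ plays no role — only the underlying $\Sn_n$ combinatorics matters), I would in practice cite \cite[Section 2.1]{CE} for the combinatorial core and only verify that nothing in the argument used more than the symmetric-group structure.
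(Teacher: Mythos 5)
The paper does not actually prove this lemma; it is imported verbatim from \cite[Section 2.1]{CE}, so the only thing to check is whether your reconstruction of that argument is sound, and it is. The two steps you identify are exactly the right ones and they close the proof directly: since $s_1<\dots<s_p$ all lie in $U\cap[2p-1]$ when $s_p\le 2p-1$, at most $p-1$ elements of $[2p-1]$ lie outside $U$, so $i_p\ge 2p>s_p$ for every $p$; then for any nonempty $T\subseteq[b]$, setting $m=s_{\min T}$, the sets $U$ and $\sigma_T\cdot U$ have the same elements below $m$, $U$ contains $m$ while $\sigma_T\cdot U$ does not (every inserted $i_p$ with $p\in T$ satisfies $i_p>s_p\ge m$), so the first discrepancy makes $U$ lexicographically smaller. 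The only part I would drop is the proposed induction on $b$: it is not needed, and if set up by deleting $\{s_1,i_1\}$ and relabelling it does not obviously preserve the bound $s_p\le 2p-1$ (when $i_1$ exceeds $s_{p+1}$ only one element below $s_{p+1}$ is removed, giving $2p$ rather than $2p-1$), whereas the direct two-step argument you also sketch is already complete.
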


We note that the in the case of $\FI$, defining $J_i^j$ does not require a choice of map $[n] \rightarrow G$. In the above definition we have chosen the trivial map for the following reason. If $(f,g):[m] \rightarrow [n]$ is any map in $\FI_G$, and $(\sigma,\mathbf{1}) \in G_n$, then $(\sigma,\mathbf{1})\circ(f,g) = (\sigma \circ f, g)$. In other words, choosing the map into $G$ to be trivial grants us the ability to often times ignore the map $g$. This choice will allow us to use the arguments of \cite{CE}.\\

For the remainder of this section we fix integers $r \leq n$, and write $F := \Z[\Hom_{\FI_G}([r],[n])]$.\\

\begin{definition}
Let $1 \leq a \leq b$ be integers, and assume $S \in \Sigma(b)$. Then we define the following submodules of $F$,
\begin{enumerate}
\item $F^{\neq S} := ((f,g) \in F \mid S \not\subseteq \im f)$;
\item $F^b := ((f,g) \in F \mid \forall S \in \Sigma(b), S \not\subseteq \im f) = \cap_{S \in \Sigma(b)}F^{\neq S}$;
\item $F^{a,b} := ((f,g) \in F \mid \forall S \in \Sigma(a,b), S \not\subseteq \im f) = \cap_{S \in \Sigma(a,b)}F^{\neq S}$;
\item $F_{=S} := ((f,g) \in F \mid [2b] \cap \im f = S)$.\\
\end{enumerate}
\end{definition}

\begin{proposition}[\cite{CE} Propositions 2.3, 2.4, 2.6, and Lemma 2.5]\label{combiprop}
Let $a,b,m,$ and $p$ be non-negative integers.
\begin{enumerate}
\item If $n \geq b+r$, then
\[
F = I_b\cdot F + F^b.
\]
\item If $a \leq b$ and $2b \leq n$, then
\[
F^{a,b+1} \subseteq F^{a,b} + \sum_{S \in \Sigma(a,b)} J_S \cdot F_{=S}.
\]
\item If $V$ is an $\FI_G$-module which is generated in degree $\leq r$, then $I_{r+1}\cdot M_n = 0$ for all $n \geq 0$.
\item Given $(f,g):[r] \rightarrow [n]$ and $\{i_1,j_1,\ldots,i_m,j_m\} \subseteq [n]$, if $\im f \cap \{i_p,j_p\} = \emptyset$ for some $p$, then $J_{i_1}^{j_1}\cdots J_{i_m}^{j_m} (f,g) = 0$.\\
\end{enumerate}
\end{proposition}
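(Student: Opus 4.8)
The final statement is Proposition~\ref{combiprop}, which collects four combinatorial facts about the $G_n$-module $F = \Z[\Hom_{\FI_G}([r],[n])]$ and its submodules. Since the paper explicitly advertises that ``the techniques of \cite{CE} can be applied to $\FI_G$,'' my plan is to reduce each of the four assertions to its $\FI$-analogue in \cite[Propositions 2.3--2.6]{CE}, using the key structural observation already made in the text: because the transpositions $J_i^j$ are built from permutations paired with the \emph{trivial} map into $G$, one has $(\sigma,\mathbf{1})\circ(f,g) = (\sigma\circ f, g)$, so the $G$-decoration $g$ is simply carried along passively under all the operations in play. Concretely, the forgetful map $\Hom_{\FI_G}([r],[n]) \to \Hom_{\FI}([r],[n])$ is $\Sn_n$-equivariant and its fibers are the (free, transitively permuted) sets of choices of $g$, so $F$ decomposes as a direct sum, indexed by injections $f\colon[r]\hookrightarrow[n]$, of copies of $\Z[\Hom_{\Sets}(\im f, G)]$, and every submodule appearing ($F^{\neq S}$, $F^b$, $F^{a,b}$, $F_{=S}$) is a sub-sum over a subset of the $f$'s with the full $g$-fiber attached. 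This makes each identity ``the same'' as in \cite{CE} on each $g$-fiber.

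For part (1), I would apply \cite[Proposition 2.3]{CE} to the injection part: for $n \geq b+r$, any $f$ either has $\im f \supseteq S$ for some $S\in\Sigma(b)$ (so its fiber lands in $F^b$ after the complementary step) or it can be moved by an element of $I_b$ (built from permutations with trivial $G$-part, which commute past the $g$'s) into $F^b$; summing over $g$-fibers gives $F = I_b\cdot F + F^b$. For part (2), the containment $F^{a,b+1}\subseteq F^{a,b} + \sum_{S\in\Sigma(a,b)} J_S\cdot F_{=S}$ is again checked $f$ by $f$: if $\im f$ avoids some $S\in\Sigma(a,b)$ it already lies in $F^{a,b}$; otherwise $S\subseteq\im f$, and one uses the $\FI$-version \cite[Proposition 2.4]{CE} together with Lemma~\ref{combi} to rewrite the fiber over $f$ modulo $J_S\cdot F_{=S}$ into $F^{a,b}$, the $g$-decoration being untouched. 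For part (3), I would invoke Proposition~\ref{combiprop}(4) (the vanishing $J_{i_1}^{j_1}\cdots J_{i_m}^{j_m}(f,g)=0$ when $\im f$ misses a pair $\{i_p,j_p\}$): if $V$ is generated in degree $\leq r$, then $M$ can be taken a sum of $M(m)$'s with $m\leq r$, and any generator of $I_{r+1}$ is a product of $r+1$ factors $J_{i_p}^{j_p}$ on $\geq 2(r+1)$ distinct indices, forcing at least one pair to miss the $\leq r$-element image of every basis element, so the whole product kills $M_n$. For part (4), the statement $J_{i_1}^{j_1}\cdots J_{i_m}^{j_m}(f,g)=0$ when $\im f\cap\{i_p,j_p\}=\emptyset$ follows because the transposition $(i_p,j_p)$ fixes $(f,g)$ pointwise (both $i_p$ and $j_p$ lie outside $\im f$, and the $G$-part is unchanged), so $J_{i_p}^{j_p}(f,g) = (f,g)-(f,g)=0$, and the remaining factors annihilate whatever is there; this is essentially \cite[Lemma 2.5]{CE} verbatim.

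The main obstacle I anticipate is not conceptual but bookkeeping: I must be scrupulous that every combinatorial manipulation from \cite{CE} really does only involve permutations $(\sigma,\mathbf{1})$ with trivial $G$-component — i.e.\ that no step secretly requires conjugating or multiplying by a genuinely $\FI_G$ morphism whose $G$-part is nontrivial — and that the decomposition of $F$ over $g$-fibers is compatible with \emph{all four} submodules simultaneously (so that an identity proved fiberwise sums up correctly). The remark in the text preceding the proposition, that choosing the trivial map ``grants us the ability to often times ignore the map $g$,'' is precisely the assertion that this obstacle is surmountable, so in the write-up I would state once and for all the $\Sn_n$-equivariant fiber decomposition $F \cong \bigoplus_{f} \Z[\mathrm{Map}(\im f, G)]$ and note that each of $F^{\neq S}, F^b, F^{a,b}, F_{=S}$ is ``$\Sn_n$-saturated'' in the sense of being a sub-sum of full fibers, then cite \cite[Section 2]{CE} for the four statements on the underlying $\FI$-structure, indicating the one-line modification in each case.
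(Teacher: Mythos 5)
Your proposal follows the same route as the paper: exploit the fact that the elements $J_i^j$ are built from permutations paired with the trivial $G$-map, so that $(\sigma,\mathbf{1})\cdot(f,g)=(\sigma f,g)$ and the $g$-component rides along passively, and then the combinatorial arguments of \cite[Section 2]{CE} carry over essentially verbatim. The paper simply asserts this and writes out the lex-largest-image contradiction argument for part (1) as a worked example; you propose (equivalently, and slightly more systematically) to package the reduction once and for all as the $\Sn_n$-equivariant decomposition $F\cong\Z[\Hom_{\FI}([r],[n])]\otimes_\Z\Z[\mathrm{Map}([r],G)]$ with $\Sn_n$ acting only on the left factor, and note that each of $F^{\neq S},F^b,F^{a,b},F_{=S}$ is a sub-sum of full $g$-fibers. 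Two small slips in your parenthetical glosses of the underlying \cite{CE} facts, which do not affect the validity of your plan since you delegate those details to \cite{CE}: membership $(f,g)\in F^{b}$ (or $F^{a,b}$) requires that \emph{every} $S\in\Sigma(b)$ (resp.\ $\Sigma(a,b)$) fail to be contained in $\im f$, not merely some $S$; and part (1) is proved by a lexicographic-maximum contradiction, not by a direct dichotomy over basis vectors. Also note that because the ideals $I_b$ and the products $J_S$ mix $g$-fibers through the $\Sn_n$-action, identities (1) and (2) are not literally ``fiberwise''; what is true (and what you need) is that every group-algebra move in the \cite{CE} argument has trivial $G$-part, hence respects the tensor factorization.
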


\begin{proof}
The cited source proves all of these claims for $\FI$-modules. As stated previously, by our choice in defining the elements $J_i^j$ all of these arguments will work almost verbatim. We work through the proof of the first statement as an example of this.\\

It is clear that $F = I_b\cdot F + F^b$ if and only if the latter group contains all the canonical basis vectors. Assume that this is not the case, and pick $(f,g) \notin I_b\cdot F + F^b$ so that the image of $f$ is lexicographically largest among all basis vectors missing from $I_b\cdot F + F^b$. Note that the image of $f$ has size $r$, and there are therefore at least $b$ elements in the compliment of this image, by assumption. Write $i_1 < i_2 < \ldots < i_b$ for some sequence of elements in $[n] - \im f$. By assumption, $(f,g) \notin F^b$, which implies that there is some $S \in \Sigma(b)$ such that $S$ is contained in the image of $f$. Write the elements of $S$ in increasing order as $s_1 < \ldots < s_b$, and consider $((s_p,i_p),\mathbf{1})\cdot (f,g) = ((s_p,i_p)f,g)$. Applying Lemma \ref{combi} with $U$ being the image of $f$, we conclude that the image of $(s_p,i_p)f$ is lexicographically larger than the image of $f$. By our assumption on $f$, it must be the case that $((s_p,i_p)f,g) \in I_b\cdot F + F^b$. Calling $J := J_{i_1}^{s_1}\cdots J_{i_p}^{s_p}$, it follows from definition that
\[
J-(id,\mathbf{1}) = \sum_{id \neq \sigma \in H} (-1)^{\sigma}(\sigma,\mathbf{1})
\]
where $H$ is the subgroup of $\Sn_n$ generated by the transpositions $(s_p,i_p)$. In particular, $(J-(id,\mathbf{1}))(f,g) \in I_b\cdot F + F^b$ by the previous computation. On the other hand, $J\cdot (f,g) \in I_b \cdot F \subseteq I_b\cdot F + F^b$ by definition. This shows that $(f,g) \in I_b\cdot F + F^b$, which is a contradiction.\\
\end{proof}

We are now ready to state and prove the main theorem of this section. As with all the previous statements, the proof of the following proceeds in precisely the same way it did in the case of $\FI$-modules. \\

\begin{theorem}[\cite{CE}, Theorem A]\label{boundwidth}
Let $K \subseteq M$ be torsion free $\FI_G$-modules, and assume that $M$ is generated in degree $\leq d$ and $K$ is generated in degree $\leq r$. Then for all $n \geq \min\{r,d\} + r+1$, and all $a \leq n$,
\[
K_n \cap \sum_{i=1}^a M_{n-\{i\}} = \sum_{i=1}^a K_{n-\{i\}}.
\]
In particular, if $V$ is an $\FI_G$-module which is generated in degree $\leq d$ and related in degree $\leq r$ then $\dwidth(V) \leq \min\{r,d\} + r$.\\
\end{theorem}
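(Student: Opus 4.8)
The strategy is to follow Church--Ellenberg's induction on $b$ in \cite{CE}, using the combinatorial ideal machinery of Proposition~\ref{combiprop} applied to the $\FI_G$-setting, where the choice of trivial $G$-maps in the definition of $J_i^j$ guarantees that every argument carries over. The key reduction is Proposition~\ref{ecombi}: after fixing a presentation $0 \to K \to M \to V \to 0$ with $M$ generated in degree $\leq d$ and $K$ generated in degree $\leq r$, the degree of $H_1^{D^a}(V)$ is the smallest $m$ with $K_{n+a} \cap \sum_{i=1}^a M_{n+a-\{n+i\}} = \sum_{i=1}^a K_{n+a-\{n+i\}}$ for all $n > m$, so it suffices to prove the displayed identity for $K \subseteq M$ torsion free (we may assume $K$ and $M$ torsion free since passing to the torsion-free quotient does not change the relevant groups, or rather, one reduces to this case as in \cite{CE}). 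Then $\dwidth(V) = \sup_a(\deg H_1^{D^a}(V) + a)$, and the bound $n \geq \min\{r,d\}+r+1$ independent of $a$ immediately gives $\dwidth(V) \leq \min\{r,d\}+r$.

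\textbf{Main steps.} First I would set up the filtration argument: working inside $F = \Z[\Hom_{\FI_G}([r],[n])]$, one shows by descending induction on $b$ (starting from $b$ large enough that the relevant $\Sigma(a,b)$ is empty, using part~(1) of Proposition~\ref{combiprop} to control $F$ modulo $F^b$ once $n \geq b+r$) that a certain intersection identity holds at level $b$. The inductive step uses part~(2) of Proposition~\ref{combiprop}, $F^{a,b+1} \subseteq F^{a,b} + \sum_{S \in \Sigma(a,b)} J_S \cdot F_{=S}$, valid for $2b \leq n$, to push from level $b+1$ down to level $b$; the terms $J_S \cdot F_{=S}$ are handled using part~(4) (the vanishing $J_{i_1}^{j_1}\cdots(f,g) = 0$ when some $\{i_p,j_p\}$ misses $\im f$) together with part~(3) ($I_{r+1}\cdot M_n = 0$ when $M$ is generated in degree $\leq r$) to see that these correction terms already lie in $\sum_i K_{n-\{i\}}$ or die in $M$. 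Second, I would translate the $F$-level statement back to the module level by applying the presentation: an element of $K_n \cap \sum_{i=1}^a M_{n-\{i\}}$ lifts to $F$, and the combinatorial identity rewrites it as a sum of terms each manifestly in some $K_{n-\{i\}}$. Third, I would chase the numerics: the constraints $n \geq b+r$ and $2b \leq n$ from the two parts of Proposition~\ref{combiprop}, combined with the induction starting at $b = r+1$ (so that $I_{r+1}$ kills $M$) or at $b$ where $\Sigma$ becomes trivial, force the threshold $n \geq \min\{r,d\} + r + 1$; one must treat the two cases $r \leq d$ and $d < r$ separately, swapping the roles of the generating degrees of $M$ and $K$ in the second case.

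\textbf{Expected obstacle.} The genuinely delicate part is the bookkeeping in the descending induction on $b$ and extracting exactly the threshold $\min\{r,d\}+r+1$ rather than a weaker bound like $2r+1$; this is where \cite{CE} does its careful work with $\Sigma(a,b)$, the Catalan-type combinatorics, and lexicographic orderings via Lemma~\ref{combi}. Everything group-theoretic about $G$ is inert here precisely because $J_i^j$ was defined with trivial $G$-map and because the $G_n$-action on $\Hom_{\FI_G}([m],[n])$ is transitive on the underlying injections, so $(\sigma,\mathbf{1})\circ(f,g) = (\sigma\circ f, g)$ lets one ignore $g$ throughout; hence the $\FI_G$-proof is, as the paper says, essentially verbatim the $\FI$-proof, and the only real content is reproducing the Church--Ellenberg combinatorial estimate. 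A secondary minor point to be careful about is the reduction to the torsion-free case for $K$ and $M$, ensuring that replacing the presentation's modules by torsion-free quotients does not affect $\deg H_1^{D^a}(V)$ in the range that matters.
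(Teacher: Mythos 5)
Your proposal takes essentially the same route as the paper's proof: both reduce to the combinatorial identity via Proposition~\ref{ecombi}, work over $\Z$ inside $F = \Z[\Hom_{\FI_G}([r],[n])]$, and run the Church--Ellenberg descending induction on $b$ using parts~(1)--(4) of Proposition~\ref{combiprop}, with the observation that the trivial-$G$-map choice in $J_i^j$ and the action $(\sigma,\mathbf{1})\circ(f,g) = (\sigma\circ f,g)$ make the $\FI$ argument go through verbatim; like the paper, you defer the delicate step ($K^{a,b+1}\cap\sum_i M_{n-\{i\}}\subseteq K^{a,b}$) to \cite{CE}. One small superfluity: in deriving the ``in particular'' clause you discuss a reduction to the torsion-free case, but none is needed, since in a presentation $0\to K\to M\to V\to 0$ with $M$ relatively projective the module $M$ is automatically $\FI_G$-torsion-free (transition maps on $M(W)$ are injective) and hence so is $K\subseteq M$.
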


\begin{proof}
We first note that the second statement follows as an immediate consequence of the first statement, Proposition \ref{ecombi}, and the definition of derived regularity. It therefore suffices to prove the first statement. Due to its similarity with the proof in the provided source, we only give an outline here for the convenience of the reader.\\

Our first reduction will be to assume that $K$ and $M$ are $\FI_G$-modules over $\Z$. Observe that if $V$ is an $\FI_G$-module over a ring $k$, then it can also be considered as an $\FI_G$-module over $\Z$. It is clear that doing this does not change whether $V$ is generated in finite degree.\\

Because $K$ is generated in degree $\leq r$, the map $F \otimes K_r \rightarrow K_n$ is surjective for all $n > r$. Let $a$ be as in the statement of the theorem, and let $b \geq a$ be an integer. We define the following submodules of $K_n$
\[
K^b := \im(F^b \otimes K_r \rightarrow K_n), \text{ } K^{a,b} := \im(F^{a,b} \otimes K_r \rightarrow K_n)
\]
The remainder of the proof proceeds in the following steps. One first shows that $K^{a, \min\{r,d\}+r+1} = K_n$, and then that $K^{a,b+1} \cap \sum_i M_{n-\{i\}} \subseteq K^{a,b}$. At this point induction on $b$, beginning at $\min\{r,d\}+r+1$ and ending at $a$, implies that $K_n \cap \sum_i M_{n-\{i\}} \subseteq K^{a,a}$. Note that $K^{a,a}$ is, by definition, the submodule of $K_n$ generated by images of induced maps $(f,g)$ such that $[a]$ is not contained in the image of $f$. In other words, using the remark at the beginning of the section,
\[
K^{a,a} = \sum_{i=1}^a K_{n - \{i\}}.
\] 
This proves the theorem.\\

Applying the first part of Proposition \ref{combiprop}, we find
\[
K_n = \im(F \otimes K_r) = \im(I_{\min\{r,d\}+r+1}F + F^{\min\{r,d\}+r+1} \otimes K_r) = I_{\min\{r,d\}+r+1} \cdot K_n + K^{\min\{r,d\}+r+1}
\]
Applying the third part of Proposition \ref{combiprop}, we have that $I_{\min\{r,d\}+r+1} \cdot K_n = 0$, and therefore
\[
K_n = K^{\min\{r,d\}+r+1} \subseteq K^{a,{\min\{r,d\}+r+1}} \subseteq K_n.
\]

The second claim - that $K^{a,b+1} \cap \sum_i M_{n-\{i\}} \subseteq K^{a,b}$ - is considerably more subtle. We direct the reader to the original source for the details.\\
\end{proof}

\begin{corollary}\label{betterboundwidth}
Let $V$ be an $\FI_G$-module which is presented in finite degree, and which is generated in degree $\leq d$. Then
\[
\dwidth(V) \leq hd_1(V) + \min\{d,hd_1(V)\}.
\]
\end{corollary}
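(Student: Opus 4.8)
The plan is to reduce everything to Theorem \ref{boundwidth} by choosing an economical presentation, treating separately the two cases distinguished by the sign of $hd_0(V)-hd_1(V)$. First note that the right-hand side $hd_1(V)+\min\{d,hd_1(V)\}$ is non-decreasing in $d$, so we may assume $d = hd_0(V)$; we may also assume $V\neq 0$, so that $d\geq 0$, and we write $r := hd_1(V)$, which is finite since $V$ is presented in finite degree. The argument runs by induction on $\max\{0,d-r\}$.

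For the base case, suppose $r\geq d$. Choose a presentation $0\to K\to M\to V\to 0$ with $M=M(H_0(V))$ relatively projective and generated in degree $\leq d$. Then $M$ is torsion free (a relatively projective module embeds into its shift via $\iota$, as observed in the proof of Lemma \ref{inddeg}, and a direct sum of torsion free modules is torsion free), and $K\subseteq M$ is therefore torsion free as well. By the bounds (\ref{reldegbound}) recalled in Remark \ref{reldegdef}, the relation degree $hd_0(K)$ of this presentation is exactly $r$ once $r\geq d=hd_0(V)$. Hence Theorem \ref{boundwidth} applies and gives $\dwidth(V)\leq \min\{r,d\}+r = d+r = r+\min\{d,r\}$.

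For the inductive step, suppose $r<d$, i.e. $hd_1(V)<hd_0(V)$. Then the lemma of Li and Yu recalled in Remark \ref{reldegdef} produces an exact sequence $0\to V'\to V\to Q\to 0$ with $Q$ relatively projective and $hd_0(V')=d-1$, and moreover $H_i(V')=H_i(V)$ for all $i\geq 1$; in particular $hd_1(V')=r$ and $V'$ is again presented in finite degree. Since $Q$ is relatively projective it is $\sharp$-filtered, hence acyclic with respect to $D^a$ for every $a\geq 1$ by part 2 of Proposition \ref{dervprop}. Feeding the short exact sequence into the long exact sequence of left derived functors of $D^a$ and using $H_i^{D^a}(Q)=0$ for $i\geq 1$ yields isomorphisms $H_1^{D^a}(V')\cong H_1^{D^a}(V)$ for all $a\geq 1$, so $\dwidth(V)=\dwidth(V')$. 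Since $\max\{0,(d-1)-r\}=\max\{0,d-r\}-1$, the inductive hypothesis applies to $V'$ and gives
\[
\dwidth(V)=\dwidth(V')\leq r+\min\{d-1,r\}=r+r=r+\min\{d,r\},
\]
the last two equalities holding because $r\leq d-1$. This closes the induction.

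The substance of the result is already in Theorem \ref{boundwidth}; the only real subtleties are bookkeeping. One must check that the presentation used in the base case has torsion free kernel and module, so that Theorem \ref{boundwidth} genuinely applies, and that its relation degree is exactly $r$ rather than merely $\leq\max\{d,r\}$ — both of which come from Remark \ref{reldegdef}. The Li--Yu reduction is needed only to improve the crude estimate $d+\max\{d,r\}$ coming from an arbitrary presentation to the sharp value $2r$ when $r<d$, and the one point requiring a little care there is that the reduction sequence stays exact under $D^a$ in the relevant range because $Q$ is $D^a$-acyclic.
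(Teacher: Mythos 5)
Your proposal is correct and follows essentially the same route as the paper: the paper's proof is a one-line appeal to the Li--Yu reduction of Remark \ref{reldegdef} combined with the $D^a$-acyclicity of $\sharp$-filtered objects, which is exactly the induction you carry out in detail (base case via Theorem \ref{boundwidth} applied to the canonical presentation, inductive step via the long exact sequence for $D^a$ applied to $0\to V'\to V\to Q\to 0$). Your additional bookkeeping — checking torsion-freeness of $M(H_0(V))$ and its submodule, and that the relation degree equals $hd_1(V)$ exactly when $hd_1(V)\geq d$ — is a faithful expansion of what the paper leaves implicit.
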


\begin{proof}
This follows from the techniques discussed in Remark \ref{reldegdef}. One simply notes that the functors $D^a$ are right exact, and that $\sharp$-filtered objects are acyclic with respect to these functors.\\
\end{proof}

Using all we have thus far learned, we can finally prove Theorem \ref{finitereg}.\\

\begin{proof}[Proof of Theorem \ref{finitereg}]
One applies Corollary \ref{betterboundwidth}, and Proposition \ref{homreg}.\\
\end{proof}

\section{Depth}

\subsection{Definition and the Classification Theorem} \label{depthclass}

The ``usual'' first definition of depth in classical commutative algebra is stated in terms of regular sequences (see \cite[Chapter 18]{E} for the classical theory). One then proves a relationship between this definition and the Koszul complex. It is not immediately obvious what one would mean by a regular sequence in an $\FI_G$-module, however. Another approach one might consider is defining depth through some kind of local cohomology theory. This may not work in this setting, as $\FI_G$-mod over a field of characteristic $p$, for example, will not have sufficiently many injectives. It is therefore not even clear that a local cohomology theory exists in this case. Note that if $k$ is a field of characteristic 0, then Sam and Snowden have developed a theory of local cohomology and depth for $\FI$-modules \cite{SS3}. The definition we give now seems completely divorced from the classical theory. We hope that through the proofs that follow, one can develop a better idea of why this is the right definition. In Section \ref{classical}, we explore a more classically motivated definition, and prove that it is equivalent.\\

\begin{definition}
Let $V$ be an $\FI_G$-module. Then we define the \textbf{depth} $\de(V)$ of $V$ to be the infimum
\[
\de(V) := \inf\{a \mid H_1^{D^{a+1}}(V) \neq 0\} \in \N \cup \{\infty\}\},
\]
where we use the convention that the infimum of the empty set is $\infty$.\\
\end{definition}

\begin{lemma}\label{welldef}
Let $V$ be an $\FI_G$-module which is presented in finite degree, and for which there is some $\delta \geq 0$ such that $V$ is acyclic with respect to $D^a$ for all $a \leq \delta$, while $V$ is not acyclic with respect to $D^{\delta+1}$. Then for all $l \geq 1$, $H_{l}^{D^{\delta+l}}(V) \neq 0$, while $H_{i}^{D^{\delta+l}}(V) = 0$ for $i > l$.
\end{lemma}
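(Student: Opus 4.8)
The plan is to prove both assertions simultaneously by induction on $l$, using the two–column spectral–sequence exact sequence (\ref{groth}), the vanishing $H_i^D = 0$ for $i > 1$ from Proposition \ref{dervprop}, and the degree bound of Proposition \ref{regbound} (which applies here because a module presented in finite degree has $\dwidth(V) < \infty$ by Theorem \ref{boundwidth}). The one genuinely useful observation, beyond formal manipulation, is that a nonzero $\FI_G$-module $W$ of finite degree is never torsion free: if $\deg(W) = n$, then the natural map $\iota\colon W_n \to (\So W)_n = W_{n+1} = 0$ has nonzero kernel, so by Proposition \ref{dervprop}(5) we get $H_1^D(W) \neq 0$.

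For the base case $l = 1$, I would apply (\ref{groth}) with $a = \delta$,
\[
0 \to D H_i^{D^{\delta}}(V) \to H_i^{D^{\delta+1}}(V) \to H_1^D\big(H_{i-1}^{D^{\delta}}(V)\big) \to 0 .
\]
By hypothesis $V$ is acyclic with respect to $D^{\delta}$ (this is automatic when $\delta = 0$, since $D^0$ is the identity), so $H_i^{D^{\delta}}(V) = 0$ for every $i \geq 1$; hence for $i \geq 2$ both outer terms vanish and $H_i^{D^{\delta+1}}(V) = 0$. Since $V$ is not acyclic with respect to $D^{\delta+1}$, some $H_i^{D^{\delta+1}}(V)$ is nonzero, and it can only be $H_1^{D^{\delta+1}}(V)$, which settles $l=1$.

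For the inductive step, suppose $H_l^{D^{\delta+l}}(V) \neq 0$ and $H_i^{D^{\delta+l}}(V) = 0$ for all $i > l$. Feeding $a = \delta + l$ into (\ref{groth}) gives
\[
0 \to D H_i^{D^{\delta+l}}(V) \to H_i^{D^{\delta+l+1}}(V) \to H_1^D\big(H_{i-1}^{D^{\delta+l}}(V)\big) \to 0 .
\]
For $i \geq l+2$ both $H_i^{D^{\delta+l}}(V)$ and $H_{i-1}^{D^{\delta+l}}(V)$ vanish by the inductive hypothesis, so $H_i^{D^{\delta+l+1}}(V) = 0$, which is the vanishing half of the claim for $l+1$. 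For $i = l+1$ the inductive hypothesis kills $H_{l+1}^{D^{\delta+l}}(V)$, hence its derivative, and the sequence collapses to an isomorphism $H_{l+1}^{D^{\delta+l+1}}(V) \cong H_1^D\big(H_l^{D^{\delta+l}}(V)\big)$. Now $H_l^{D^{\delta+l}}(V)$ is nonzero by the inductive hypothesis and has finite degree (Proposition \ref{regbound} bounds it by $\dwidth(V) - 1 - \delta$), so by the observation above it is not torsion free and $H_1^D$ of it is nonzero; therefore $H_{l+1}^{D^{\delta+l+1}}(V) \neq 0$, completing the induction.

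The main obstacle — and it is a mild one — is exactly this torsion-freeness point: one must know that the nonzero homology modules $H_l^{D^{\delta+l}}(V)$ are detected by $H_1^D$, and this is precisely where the finite-presentation hypothesis enters, since it forces finite derived width and hence finite degree of every $H_i^{D^a}(V)$. Without such finiteness one could not exclude a nonzero torsion-free $H_l^{D^{\delta+l}}(V)$, which would break the passage from level $l$ to level $l+1$; everything else is bookkeeping with the two-column sequence (\ref{groth}) and the vanishing $H_i^D = 0$ for $i > 1$.
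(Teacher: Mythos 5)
Your proof is correct and takes essentially the same approach as the paper: induction on $l$ via the two‑column Grothendieck exact sequence~(\ref{groth}), using Proposition~\ref{regbound} together with Theorem~\ref{boundwidth} to get finite degree, and hence nontrivial torsion detected by $H_1^D$. You also (helpfully) spell out the $\delta = 0$ base case and the small observation that a nonzero finite‑degree module cannot be torsion free, both of which the paper leaves implicit, and you correct a small index typo in the paper's base‑case invocation of~(\ref{groth}).
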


\begin{proof}
We proceed by induction on $l$. If $l = 1$. We know that $H_i^{D^{\delta+1}}(V) \neq 0$ for some $i$. On the other hand, if one plugs in any $i > 1$ and $a = \delta+1$ into (\ref{groth}), then one finds $H_i^{D^{\delta+1}}(V) = 0$. This shows that $H_1^{D^{\delta+1}}(V) \neq 0$, while $H_i^{D^{\delta+1}}(V) = 0$ for all $i > 1$.\\

Assume that $H_{l}^{D^{\delta+l}}(V) \neq 0$. Then the sequence (\ref{groth}) shows
\[
0 \rightarrow DH_{l+1}^{D^{\delta+l}}(V) \rightarrow H_{l+1}^{D^{\delta+l+1}}(V) \rightarrow H_1^D(H_{l}^{D^{\delta+l}}(V)) \rightarrow 0. 
\]
By assumption $H_{l}^{D^{\delta+l}}(V) \neq 0$. Proposition \ref{regbound} and Theorem \ref{boundwidth} tell us that $H_{l}^{D^{\delta + l}}(V)$ has finite degree, and therefore has nontrivial torsion. The fifth part of Proposition \ref{dervprop} implies that $H_1^D(H_{i}^{D^{\delta}}(V)) \neq 0$, whence $H_{l+1}^{D^{\delta+l+1}}(V) \neq 0$. If $i > l$, then the above sequence, and induction, implies our desired vanishing.\\
\end{proof}

Lemma \ref{welldef} implies the following alternative characterization of depth.\\

\begin{proposition}
Let $V$ be presented in finite degrees. Then,
\[
\de(V) =\sup\{a \mid V \text{ is acyclic with respect to $D^a$}\}.
\]
\end{proposition}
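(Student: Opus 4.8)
The plan is to show the two quantities $\de(V)$ and $\sup\{a \mid V \text{ is acyclic with respect to } D^a\}$ agree by unwinding the definitions and applying Lemma \ref{welldef}. First I would dispose of the trivial case: if $V$ is acyclic with respect to $D^a$ for \emph{every} $a \geq 1$, then by definition $H_1^{D^{a+1}}(V) = 0$ for all $a \geq 0$, so $\de(V) = \inf \emptyset = \infty$, and the supremum on the right-hand side is likewise $\infty$. So from now on assume there is some smallest $a$ for which $V$ fails to be acyclic with respect to $D^a$; call it $\delta + 1$, so that $V$ is acyclic with respect to $D^a$ for all $a \leq \delta$ and not acyclic with respect to $D^{\delta+1}$. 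By construction, $\sup\{a \mid V \text{ is acyclic with respect to } D^a\} = \delta$, so it remains to prove $\de(V) = \delta$.

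The key observation is the exact sequence (\ref{groth}): plugging in $i > 1$ and an arbitrary $a$ shows that the first $a$ for which some $H_i^{D^a}(V)$ is nonzero must in fact have $H_1^{D^a}(V) \neq 0$ — because all the lower derived functors $H_{i-1}^{D^{a-1}}(V)$, $H_i^{D^{a-1}}(V)$ vanish by minimality. Thus $H_1^{D^{\delta+1}}(V) \neq 0$, which immediately gives $\de(V) = \inf\{a \mid H_1^{D^{a+1}}(V) \neq 0\} \leq \delta$. For the reverse inequality, I need to rule out $H_1^{D^{a+1}}(V) \neq 0$ for any $a < \delta$; but this is immediate, since for $a+1 \leq \delta$ the module $V$ is acyclic with respect to $D^{a+1}$, so in particular $H_1^{D^{a+1}}(V) = 0$. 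Hence $\de(V) = \delta$, as desired.

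Almost all of the real content here is already packaged in Lemma \ref{welldef} and the structure of (\ref{groth}); the present proposition is essentially a bookkeeping consequence. The one point that needs the finite-presentation hypothesis is the guarantee that the supremum/infimum regime is the dichotomous one described at the start of Lemma \ref{welldef} — namely, that ``acyclic with respect to $D^a$ for all small $a$'' followed by ``not acyclic'' is the only possible behavior, which is exactly what Lemma \ref{welldef} formalizes. I expect the main (very minor) obstacle to be simply making sure the edge cases are handled cleanly: the fully-acyclic case giving $\infty$ on both sides, and the convention that $\inf \emptyset = \infty$. No genuine difficulty is anticipated beyond invoking the already-established lemma and reading off the definitions.

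\begin{proof}
If $V$ is acyclic with respect to $D^a$ for all $a \geq 1$, then $H_1^{D^{a+1}}(V) = 0$ for all $a \geq 0$, so $\de(V) = \infty$, and the right-hand supremum is also $\infty$. Otherwise, let $\delta + 1$ be the least value of $a$ for which $V$ is not acyclic with respect to $D^a$, so that the right-hand side equals $\delta$. Applying Lemma \ref{welldef} with this value of $\delta$, we get $H_1^{D^{\delta+1}}(V) \neq 0$, so $\de(V) \leq \delta$. Conversely, for any $a$ with $a + 1 \leq \delta$, the module $V$ is acyclic with respect to $D^{a+1}$, so $H_1^{D^{a+1}}(V) = 0$; hence $\de(V) \geq \delta$. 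Therefore $\de(V) = \delta$, as claimed.
\end{proof}
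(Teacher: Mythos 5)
Your proposal is correct and follows the same route as the paper, which presents this proposition as an immediate consequence of Lemma \ref{welldef} without writing out a separate proof. One small slip in the exposition: after defining $\delta+1$ as the least $a$ for which $V$ fails to be $D^a$-acyclic, you assert ``so that the right-hand side equals $\delta$'' as if it were automatic, but a priori the acyclic range could have gaps (i.e.\ $V$ might be $D^a$-acyclic again for some $a > \delta+1$), in which case the supremum would exceed $\delta$. It is precisely the second half of Lemma \ref{welldef} — $H_l^{D^{\delta+l}}(V) \neq 0$ for every $l \geq 1$ — that rules this out, so the invocation of the lemma is doing slightly more work than your phrasing suggests. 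You do flag this issue correctly in the preamble to the proof, so it is a matter of wording rather than a genuine gap.
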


Our next goal will be to show the following incremental property of depth. It shows how the depth of a module relates to the depth of its syzygies.\\

\begin{lemma}\label{adepth}
Given an exact sequence of $\FI_G$-modules which are presented in finite degree,
\[
0 \rightarrow V' \rightarrow X \rightarrow V \rightarrow 0,
\]
such that $X$ is $\sharp$-filtered, $\de(V') = \de(V) + 1$.\\
\end{lemma}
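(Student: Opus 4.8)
The plan is to run the left-derived functors of $D^a$ against the given short exact sequence and exploit the acyclicity of $X$. Since $X$ is $\sharp$-filtered, part 2 of Proposition \ref{dervprop} gives $H_i^{D^a}(X) = 0$ for all $a \geq 1$ and all $i \geq 1$; because $D^a$ is right exact, the long exact sequence of its derived functors attached to $0 \to V' \to X \to V \to 0$ therefore collapses to natural isomorphisms
\[
H_i^{D^a}(V') \;\cong\; H_{i+1}^{D^a}(V) \qquad (a \geq 1,\ i \geq 1),
\]
together with a four-term exact sequence $0 \to H_1^{D^a}(V) \to D^a V' \to D^a X \to D^a V \to 0$ which we will not need. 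In particular $H_1^{D^{a+1}}(V') \cong H_2^{D^{a+1}}(V)$ for every $a \geq 0$, so directly from the definition of depth,
\[
\de(V') \;=\; \inf\{\, a \mid H_2^{D^{a+1}}(V) \neq 0 \,\}.
\]

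Next I would evaluate this infimum using Lemma \ref{welldef}. Suppose first $\de(V) = \delta < \infty$. Then $V$ is acyclic with respect to $D^a$ for every $a \leq \delta$ but not for $D^{\delta+1}$ — this can be seen either by quoting the alternative characterization of depth recorded after Lemma \ref{welldef}, or by a short induction using (\ref{groth}) and part 6 of Proposition \ref{dervprop} — so Lemma \ref{welldef} applies with this $\delta$ and yields $H_\ell^{D^{\delta+\ell}}(V) \neq 0$ together with $H_i^{D^{\delta+\ell}}(V) = 0$ for $i > \ell$, for all $\ell \geq 1$. Taking $\ell = 2$ shows $H_2^{D^{\delta+2}}(V) \neq 0$, so $a = \delta+1$ lies in the set above; conversely, for $a \leq \delta$ one has $H_2^{D^{a+1}}(V) = 0$, by acyclicity of $V$ when $a+1 \leq \delta$ and by the $\ell = 1$ case of Lemma \ref{welldef} when $a+1 = \delta+1$. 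Hence the infimum equals $\delta + 1 = \de(V) + 1$. In the remaining case $\de(V) = \infty$ we have $H_1^{D^b}(V) = 0$ for all $b \geq 1$; feeding this into (\ref{groth}) with $i = 2$ forces $H_2^{D^b}(V) = 0$ for all $b \geq 2$, and $H_2^{D^1}(V) = 0$ by part 6 of Proposition \ref{dervprop}, so the infimum is taken over the empty set and $\de(V') = \infty = \de(V) + 1$ under the standing convention.

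Beyond the initial setup the argument is essentially formal, so the step that needs care is the invocation of Lemma \ref{welldef}: its hypotheses demand acyclicity of $V$ with respect to \emph{all} of $D^1,\dots,D^\delta$ at once, which is exactly the point where one leans on the "presented in finite degree" hypothesis (it is what makes Proposition \ref{regbound} and Theorem \ref{boundwidth}, and hence Lemma \ref{welldef}, available for $V$). Everything else is bookkeeping with the two exact sequences above.
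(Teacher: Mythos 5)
Your proposal is correct and takes essentially the same route as the paper: both begin by applying $D^a$ to the short exact sequence, using the acyclicity of $X$ (part 2 of Proposition \ref{dervprop}) to obtain the dimension shift $H_i^{D^a}(V') \cong H_{i+1}^{D^a}(V)$ for $i,a \geq 1$, and then pin down $\de(V')$ by feeding $\ell = 1$ and $\ell = 2$ into Lemma \ref{welldef}. The only place you diverge is in the $\de(V) = \infty$ case, where the paper tersely cites the alternative characterization of depth (acyclicity with respect to all $D^a$) to conclude $H_2^{D^a}(V) = 0$, while you instead run a short induction on the exact sequence (\ref{groth}) seeded by $H_2^{D^1}(V) = 0$ from part 6 of Proposition \ref{dervprop}; both routes are valid and yield the same conclusion, so this is a presentational rather than substantive difference.
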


\begin{proof}
We begin by recalling that $X$ is acyclic with respect to $D^a$ for all $a \geq 1$ (Proposition \ref{dervprop}). Applying $D^a$ to the given exact sequence, it therefore follows that $H_i^{D^a}(V') = H_{i+1}^{D^a}(V)$ for all $i,a \geq 1$. If $\de(V) = \infty$, then this immediately implies the same about $\de(V')$. If $\de(V) = \delta < \infty$, then these equalities imply that $H_i^{D^a}(V') = 0$ at least up to $a = \delta$. Moreover, Lemma \ref{welldef} tells us that $H_1^{D^{\delta+1}}(V') = H_2^{D^{\delta+1}}(V) = 0$, while $H_1^{D^{\delta+2}}(V') = H_{2}^{D^{\delta+2}}(V) \neq 0$, showing that $\de(V') = \delta+1$.\\
\end{proof}

To prove the classification theorem, we will need a collection of technical lemmas. The following lemma was originally proven by Nagpal in \cite[Lemma 2.2]{N} for $\FI$-modules, and was later reproven by Li and Yu in \cite[Lemma 3.3]{LY}.\\

\begin{lemma}\label{nagpal}
Let $W$ be a $k[G_m]$-module for some $m$, and assume that there is an exact sequence
\[
0 \rightarrow U \rightarrow M(W) \rightarrow V \rightarrow 0,
\]
such that $U$ is generated in degree $\leq m$. Then $U$ and $V$ are both relatively projective.\\
\end{lemma}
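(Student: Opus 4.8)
The plan is to identify $U$ explicitly as a relatively projective submodule of $M(W)$, after which the assertion for $V$ will follow formally from exactness of $M$.

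First I would record the degreewise structure. Since $M(W)$ is generated in degree $m$ with first nonzero term $M(W)_m = W$ (Proposition \ref{naklemma}(1)), we have $M(W)_i = 0$ for $i < m$, and hence $U_i = 0$ for $i < m$ as well. Therefore $H_0(U)$ is concentrated in degree $m$, where it equals $W' := U_m$; moreover $W'$ is a $k[G_m]$-submodule of $M(W)_m = W$, because the inclusion $U \hookrightarrow M(W)$ restricts in degree $m$ to a $k[G_m]$-linear injection $U_m \hookrightarrow W$.

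Next I would produce the candidate isomorphism. By Proposition \ref{yoneda}, the identity of $W'$, regarded as an element of $\Hom_{G_m}(W',U_m)$, corresponds to a map $\psi\colon M(W') \rightarrow U$ which is the identity in degree $m$; since $U$ is generated in degree $\leq m$ and $H_0(U)$ is concentrated in degree $m$ with value $W'$, Nakayama's lemma (Proposition \ref{naklemma}(2)) shows $\psi$ is surjective. To see $\psi$ is injective, consider the composite $M(W') \xrightarrow{\psi} U \hookrightarrow M(W)$. This composite and the map $M(\iota)$ induced by the inclusion $\iota\colon W' \hookrightarrow W$ agree in degree $m$, and by the adjunction (\ref{projadj}) a morphism out of $M(W')$ is determined by its degree-$m$ component; hence the composite equals $M(\iota)$. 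Because $M$ is exact (Proposition \ref{mprop}) and $\iota$ is injective, $M(\iota)$ is injective, so $\psi$ is injective. Thus $U \cong M(W')$ is relatively projective. Finally, applying the exact functor $M$ to $0 \rightarrow W' \rightarrow W \rightarrow W/W' \rightarrow 0$ and comparing with the original sequence identifies $V$ with the cokernel of $M(\iota)$, namely $M(W/W')$, which is relatively projective.

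I do not expect a genuine obstacle: the only point requiring care is the verification that the composite $M(W') \rightarrow M(W)$ coincides with $M(\iota)$, which is a formal consequence of the universal property in Proposition \ref{yoneda}; the remainder is bookkeeping with the exactness of $M$ and Nakayama's lemma.
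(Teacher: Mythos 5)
Your proof is correct, and it takes a genuinely different route from the paper's. The paper applies $H_0$ to the short exact sequence, notes that $H_0(U)$ is concentrated in degree $m$ where it equals $U_m$, observes that $H_0(U)\to H_0(M(W))$ is then the injection $U_m\hookrightarrow W$, concludes $H_1(V)=0$, and finishes by invoking Theorem \ref{whomacyclic}. That argument leans on the big acyclicity machinery: from $H_1(V)=0$ one gets that $V$ is $\sharp$-filtered, and then one must additionally use that the factors of a $\sharp$-filtration are governed by $H_0$ (as noted in the remark after Theorem \ref{whomacyclic}) together with the concentration of $H_0(V)$ and $H_0(U)$ in degree $m$ to downgrade ``$\sharp$-filtered'' to ``relatively projective'' for both terms; these last steps are left implicit. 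Your argument instead constructs the isomorphism $U\cong M(U_m)$ directly from the adjunction in Proposition \ref{yoneda} (surjectivity by Nakayama, injectivity by factoring through $M(\iota)$ and exactness of $M$), and then identifies $V\cong M(W/U_m)$ by exactness of $M$. This is more elementary, fully self-contained, and gives the relatively projective structure of $U$ and $V$ explicitly, at the small cost of a few more lines of bookkeeping; the paper's route is shorter on the page but implicitly relies on a heavier theorem and an unstated reduction from $\sharp$-filtered to relatively projective.
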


\begin{proof}
Applying $H_0$ to the given exact sequence, we find that
\[
0 \rightarrow H_1(V) \rightarrow H_0(U) \rightarrow H_0(M(W)) \rightarrow H_0(V) \rightarrow 0
\]
Because $U$ was generated in degree $m$, it follows that $H_0(U)$ is only supported in degree $m$. Because $U$ is a submodule of $M(W)$, it follows that $H_0(U)_m = U_m$, and therefore the map $H_0(U) \rightarrow H_0(M(W))_m$ is injective. We conclude that $H_1(V) = 0$. Our result now follows by Theorem \ref{whomacyclic}.\\
\end{proof}

The following lemma was proven in a slightly stronger form in \cite[Lemma 3.11]{LY} using different methods.\\

\begin{lemma}\label{techlem}
Let $V$ be an $\FI_G$-module which is generated in degree $\leq m$, and assume that $H_1^{D^{m+1}}(V) = 0$. Then the relation degree of $V$ is $\leq m$. In particular, if $V$ is generated in finite degree and has infinite depth, then $V$ is presented in finite degree.\\
\end{lemma}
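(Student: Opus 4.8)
The plan is to construct a presentation of $V$ from a relatively projective module generated in the \emph{same} degree as $V$, and then push $D^{m+1}$ through it. Since $V$ is generated in degree $\leq m$, Definition \ref{fg} furnishes a surjection $M \twoheadrightarrow V$ with $M = \bigoplus_{i} M(n_i)$ and all $n_i \leq m$; such an $M$ is relatively projective and is itself generated in degree $\leq m$. Writing $K$ for the kernel we get a presentation
\[
0 \to K \to M \to V \to 0,
\]
and it suffices to show that $K$ is generated in degree $\leq m$.

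I would apply the right exact functor $D^{m+1}$ and read off the long exact sequence of its left derived functors. Since $M$ is relatively projective it is acyclic for $D^{m+1}$ (part 2 of Proposition \ref{dervprop}), so $H_1^{D^{m+1}}(M) = 0$ and the sequence begins
\[
0 \to H_1^{D^{m+1}}(V) \to D^{m+1}K \to D^{m+1}M.
\]
The hypothesis $H_1^{D^{m+1}}(V) = 0$ makes the last arrow an injection $D^{m+1}K \hookrightarrow D^{m+1}M$. Iterating the generating-degree bound of part 3 of Proposition \ref{dervprop} $m+1$ times, starting from ``$M$ generated in degree $\leq m$'', shows that $D^{m+1}M$ is generated in degree $\leq -1$, hence $D^{m+1}M = 0$ by Nakayama (part 2 of Proposition \ref{naklemma}). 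Therefore $D^{m+1}K = 0$, and the converse clause of part 3 of Proposition \ref{dervprop}, applied with $a = m+1$, forces $K$ to be generated in degree $\leq m$. This is precisely the statement that $V$ has relation degree $\leq m$. For the final sentence, suppose $V$ is generated in degree $\leq m$ for some finite $m$ and $\de(V) = \infty$; by definition of depth this means $H_1^{D^{a+1}}(V) = 0$ for every $a \geq 0$, and taking $a = m$ puts us in the case just treated, so $V$ has relation degree $\leq m < \infty$ and is presented in finite degree.

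Everything here is formal once the earlier results are in hand, so I do not expect a serious obstacle; the one spot deserving care is the arithmetic at the bottom of the degree range. One must know that a module generated in degree $\leq -1$ vanishes, and one applies the converse half of part 3 of Proposition \ref{dervprop} with target degree $-1$, so that the computation $(m+1) + (-1) = m$ is what pins the relation degree exactly at $m$. (The case $V = 0$, i.e.\ $m = -1$, is trivial and can be dispatched first.) The substance of the lemma is thus the combination of right exactness of $D^{m+1}$, the $D$-acyclicity of relatively projective modules, and the generating-degree behavior of $D$.
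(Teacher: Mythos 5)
Your proof is correct and follows essentially the same route as the paper: present $V$ by a (relatively) projective $M$ generated in degree $\leq m$, apply $D^{m+1}$ using the $D$-acyclicity of $M$ and the vanishing hypothesis to conclude $D^{m+1}K = 0$, and then invoke the converse direction of part 3 of Proposition \ref{dervprop}. The extra care you take with the degree $-1$ boundary case and the $V=0$ case is welcome but not a departure from the paper's argument.
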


\begin{proof}
By assumption there is an exact sequence,
\[
0 \rightarrow K \rightarrow M \rightarrow V \rightarrow 0
\]
where $M$ is free and generated in degree $\leq m$. Applying the functor $D^{m+1}$ to this sequence, and using Proposition \ref{dervprop} as well as our assumption, we find that $D^{m+1}K = 0$. Proposition \ref{dervprop}  implies that $K$ is generated in degree $\leq m$.\\
\end{proof}

\begin{theorem}[The Depth Classification Theorem]
Let $V$ be an $\FI_G$-module which is presented in finite degree. Then:
\begin{enumerate}
\item $\de(V) = 0$ if and only if $V$ has torsion;
\item $\de(V) = \infty$ if and only if $V$ is $\sharp$-filtered;
\item $\de(V) = \delta$ is a positive integer if and only if there is an exact sequence
\[
0 \rightarrow V \rightarrow X_{\delta-1} \rightarrow \ldots \rightarrow X_{0} \rightarrow V' \rightarrow 0
\]
where $X_i$ is sharp filtered for all $i$, $hd_0(V) = hd_0(X_{\delta-1})$, $hd_0(X_i) > hd_0(X_{i-1}) > hd_0(V')$ for all $i$, and $V'$ is some $\FI_G$-module with torsion.\\
\end{enumerate}
\end{theorem}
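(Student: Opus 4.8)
The plan is to prove each of the three equivalences in turn, using the incremental behavior of depth under $\sharp$-filtered extensions (Lemma \ref{adepth}), the Nagpal-Snowden style shift result (Theorem \ref{polystab}, or rather the effective version via derived width), and Lemma \ref{welldef} on the vanishing pattern of the $H_i^{D^a}$.

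\medskip

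\textbf{Statement (1): $\de(V) = 0 \iff V$ has torsion.} By definition $\de(V) = 0$ means $H_1^{D^1}(V) = H_1^D(V) \neq 0$. By part 5 of Proposition \ref{dervprop}, $H_1^D(V) \neq 0$ if and only if $V$ is not torsion free, i.e. $V$ has torsion. This is essentially immediate and I would dispatch it in one line.

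\medskip

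\textbf{Statement (2): $\de(V) = \infty \iff V$ is $\sharp$-filtered.} If $V$ is $\sharp$-filtered, part 2 of Proposition \ref{dervprop} says $V$ is acyclic with respect to $D^a$ for all $a \geq 1$, so $H_1^{D^{a+1}}(V) = 0$ for all $a$, giving $\de(V) = \infty$. Conversely, suppose $\de(V) = \infty$, so $V$ is acyclic with respect to every $D^a$. Since $V$ is presented in finite degree, it is in particular generated in finite degree, say in degree $\leq m$; then $H_1^{D^{m+1}}(V) = 0$, and $H_1(V)$ — I claim — must vanish as well. The cleanest route: since $V$ has finite derived width ($\dwidth(V) \leq \min\{r,d\}+r$ by Theorem \ref{boundwidth}, hence $\dreg(V) = -\infty$ because $V$ is acyclic w.r.t. all $D^a$), Proposition \ref{homreg} forces $hd_i(V) - i \leq \dreg(V) + \max\{hd_1(V),d\} - 1$, but we want $H_1(V) = 0$ directly. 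Instead I would argue: build a map $M(H_0(V)) \to V$ that is surjective (Nakayama) with kernel $K$; apply $D^a$ for $a$ large (larger than the generating degree of $M(H_0(V))$) to see $D^a K = H_1^{D^a}(V) = 0$ for all $a \gg 0$, so $K$ has finite degree; but more carefully, the degreewise-acyclicity plus Lemma \ref{techlem} shows $V$ is presented in finite degree with relation degree $\leq hd_0(V)$, and then one runs the argument of Theorem \ref{whomacyclic}: acyclicity w.r.t. all $D^a$ propagates to acyclicity w.r.t. $H_1$. Concretely, I expect the right statement to be that acyclicity with respect to $D^a$ for all $a$ is equivalent to $H_1(V) = 0$ via the fact that $D$ preserves projectives and the Grothendieck spectral sequence relating $H_i$ and $H_i^{D^a}$; one resolves $V$ by relatively projectives (possible since $\dwidth$ is finite, by the remark after Proposition \ref{homreg}) and reads off that $H_1(V)$ is computed by a complex of $D^a$-acyclic pieces. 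Then Theorem \ref{whomacyclic} gives $\sharp$-filtered.

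\medskip

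\textbf{Statement (3): $\de(V) = \delta$ a positive integer.} For the ($\Leftarrow$) direction: given the exact sequence, I would break it into short exact sequences $0 \to Y_{i+1} \to X_i \to Y_i \to 0$ with $Y_\delta = V$ (reindexing so the cokernel stages telescope) and $Y_0 = V'$, each $X_i$ being $\sharp$-filtered. Apply Lemma \ref{adepth} repeatedly: $\de(Y_{i+1}) = \de(Y_i) + 1$. Starting from $\de(V') = 0$ (since $V'$ has torsion, by statement (1)), we climb up $\delta$ steps to get $\de(V) = \delta$. One must check that all intermediate modules $Y_i$ are presented in finite degree so Lemma \ref{adepth} applies; this follows from Lemma \ref{inddeg}-style bounds together with the fact that a submodule of a $\sharp$-filtered (hence finitely presented) module sitting in a finitely-presented extension stays finitely presented — here I would invoke the finiteness of derived width and Lemma \ref{techlem}.

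For the ($\Rightarrow$) direction — which I expect to be the main obstacle — suppose $\de(V) = \delta \geq 1$. The idea is to embed $V$ into a $\sharp$-filtered module, chop off the cokernel, and induct downward on the depth. By Theorem \ref{polystab} (or, to stay effective, using that $\So_b V$ is $\sharp$-filtered for $b > \dreg(V)$ once we establish Theorem \ref{shiftreg}'s ideas), $V$ maps to $\So_b V$ which is $\sharp$-filtered; but $\iota_b: V \to \So_b V$ need not be injective unless $V$ is torsion free. Since $\de(V) = \delta \geq 1 > 0$, statement (1) tells us $V$ \emph{is} torsion free, so $\iota_b$ is injective. Set $X_{\delta-1} = \So_b V$ (or a minimal $\sharp$-filtered hull) and $Q = \coker(\iota_b)$. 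By Lemma \ref{inddeg}, $Q$ is generated in strictly smaller degree than $V$, and $hd_0(X_{\delta-1}) = hd_0(V)$ by Proposition \ref{shiftprop}(2) and the structure of the embedding. Lemma \ref{adepth} gives $\de(Q) = \de(V) - 1 = \delta - 1$. Now induct on $\delta$: if $\delta - 1 = 0$, then $Q$ has torsion and we set $V' = Q$, stopping. If $\delta - 1 \geq 1$, the inductive hypothesis produces the tail $Q \hookrightarrow X_{\delta-2} \to \cdots \to X_0 \to V' \to 0$, and splicing $0 \to V \to X_{\delta-1} \to Q \to 0$ onto the front (identifying $X_{\delta-1} \to X_{\delta-2}$ through $Q$) yields the desired sequence. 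The strict inequalities $hd_0(X_i) > hd_0(X_{i-1})$ come from Lemma \ref{inddeg}: each cokernel stage drops the generating degree strictly, and since each $X_{i-1}$ is a $\sharp$-filtered hull of the corresponding $Q$, its generating degree equals that of the $Q$, which is strictly below that of the previous stage. The termination of the induction is guaranteed because $\delta$ decreases by one at each step and because generating degrees are strictly decreasing and bounded below. The delicate point to get right is ensuring at each stage that we choose $X_i$ so that $hd_0(X_i)$ \emph{equals} (not merely bounds) the generating degree of the module it receives — this requires using a $\sharp$-filtered module whose zeroth homology is concentrated exactly on the degrees carrying $H_0$ of that module, which is exactly what the construction in the proof of Theorem \ref{whomacyclic} (and the remark following it) provides.
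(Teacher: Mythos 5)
Parts (1) and (3) of your proposal track the paper's proof closely: (1) is indeed just Proposition \ref{dervprop}(5), and for (3) the paper does exactly what you describe — embed the torsion-free $V$ into a $\sharp$-filtered shift $\So_b V$, use Lemma \ref{inddeg} to see the cokernel has strictly smaller generating and relation degrees, apply Lemma \ref{adepth}, and induct. Likewise the backwards directions of (2) and (3) are handled as you say.

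The genuine gap is the forward direction of (2): $\de(V)=\infty$ implies $V$ is $\sharp$-filtered. You correctly identify this as the crux, but none of the routes you sketch actually closes it. The appeal to Proposition \ref{homreg} fails because even with $\dreg(V)=-\infty$ the bound there still contains the finite term $\max\{hd_1(V),d\}-1+i$, so it does not force $H_1(V)=0$. The assertion that ``acyclicity w.r.t.\ all $D^a$ propagates to acyclicity w.r.t.\ $H_1$'' via a Grothendieck spectral sequence relating $H_i$ and $H_i^{D^a}$ is not available: $H_0$ is not a composite involving $D^a$, no such spectral sequence is set up in the paper, and knowing that the terms of a free resolution of $V$ are $D^a$-acyclic says nothing about $L_1H_0(V)$. (Also, $D^aK=0$ for large $a$ gives that $K$ is \emph{generated} in finite degree, not that $K$ has finite degree.) The paper's actual argument is an induction on the generating degree $m$ of $V$: let $V'\subseteq V$ be the span of $\sqcup_{i<m}V_i$ and $Q=V/V'$. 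Using Lemma \ref{techlem} (infinite depth gives relation degree $\leq m$) one bounds $\deg H_1(Q)\leq m$, hence in a presentation $0\to K\to M(W)\to Q\to 0$ with $W$ a $k[G_m]$-module the kernel $K$ is generated in degree $\leq m$; Lemma \ref{nagpal} then forces $Q$ to be relatively projective. Consequently $V'$ again has infinite depth, is finitely presented by Lemma \ref{techlem}, and is $\sharp$-filtered by induction, so $V$ is $\sharp$-filtered by Lemma \ref{needtwo1}. The essential input you are missing is Lemma \ref{nagpal} (a quotient of $M(W)$ whose syzygies live in degree $\leq m$ is automatically relatively projective), without which the implication does not go through.
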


\begin{proof}
The first statement is just a rephrasing of the fifth part of Proposition \ref{dervprop}.\\

The backwards direction of the second statement is a rephrasing of part 2 of Proposition \ref{dervprop}. We will prove the forward direction by induction on the generating degree. If $V = 0$, then it is $\sharp$-filtered by definition. Assume that $V$ is generated in non-negative degree $\leq m$, and let $V'$ be the submodule of $V$ generated by the elements of $V_i$ with $i < m$. Then we have an exact sequence,
\[
0 \rightarrow V' \rightarrow V \rightarrow Q \rightarrow 0
\]
where $Q$ is generated in degree $m$. Applying the $H_0$ functor we find that
\[
H_1(V) \rightarrow H_1(Q) \rightarrow H_0(V')
\]
By assumption, $\deg(H_0(V')) \leq m-1$, while $\deg(H_1(V))$ is at most the relation degree of $V$, which is at most $m$ by Lemma \ref{techlem}. It follows that $\deg(H_1(Q)) \leq m$.\\

If,
\[
0 \rightarrow K \rightarrow M(W) \rightarrow Q \rightarrow 0
\]
is an exact sequence, with $W$ a $k[G_m]$-module, then applying $H_0$ we find
\[
0 \rightarrow H_1(Q) \rightarrow H_0(K) \rightarrow H_0(M(W))
\]
It follows that
\[
hd_0(K) \leq \max\{hd_0(M(W)),hd_1(Q)\} \leq m.
\]
Nakayama's lemma and Lemma \ref{nagpal} imply that $Q$ is $\sharp$-filtered. In particular, it has infinite depth and therefore $V'$ also has infinite depth. Applying Lemma \ref{techlem} we know that $V'$ is presented in finite degree, and therefore induction tells us that it is also $\sharp$-filtered. Lemma \ref{needtwo1} implies that $V$ is $\sharp$-filtered, as desired.\\

The backwards direction of the third statement follows from Lemma \ref{adepth}. Conversely, assume that $V$ is presented in finite degree, and $0 < \de(V) = \delta < \infty$. Theorems \ref{boundwidth} and \ref{shiftreg} (proven in the next section) imply that we may find some $b \gg 0$ such that $\So_bV$ is $\sharp$-filtered. Because $V$ has positive depth, it is torsion free, and therefore the map $V \rightarrow \So_bV$ is an embedding. The cokernel of this map is also presented in finite degree by Lemma \ref{inddeg}, and its relation and generating degrees are strictly smaller. The claim now follows from an application of Lemma \ref{adepth} and induction on the generating degree.\\
\end{proof}

\begin{remark}
Lemma \ref{techlem} guarantees that any infinite depth module which is generated in finite degree must actually be related in finite degree as well. The second equivalence of the classification theorem may therefore be stated under the assumption that $V$ is generated in finite degree.\\

The proof that infinite depth modules are equivalent to $\sharp$-filtered modules was significantly different in the earlier versions of this paper. In these versions, the techniques used in the proof only worked if $k$ was a Noetherian ring, $G$ was finite, and $V$ was finitely generated. The recent preprint \cite{LY} proves various new properties of the derivative for $\FI$-modules. The proof of Theorem \ref{depthclass} given here has adapted certain techniques of the paper \cite{LY}, which has allowed us to generalize our original result.\\
\end{remark}

One immediate consequence of Theorem \ref{depthclass} is a bound on $\de(V)$ in terms of the generating degree of $V$.\\

\begin{corollary}
Let $V$ be an $\FI_G$-module which is presented in finite degree, and is of finite depth. Then $\de(V) \leq hd_0(V)$.\\
\end{corollary}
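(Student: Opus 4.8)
The plan is to read the bound off the Depth Classification Theorem. Since $V$ is presented in finite degree, that theorem applies, and it is convenient to split on whether $\de(V)$ is zero or a positive integer; the case $\de(V) = \infty$ is excluded by the hypothesis that $V$ has finite depth.

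First suppose $\de(V) = 0$. By the first part of the Depth Classification Theorem, $V$ has torsion, so in particular $V \neq 0$ and hence $hd_0(V) = \deg(H_0(V)) \geq 0 = \de(V)$. Now suppose $\de(V) = \delta$ is a positive integer. The third part of the Depth Classification Theorem then provides an exact sequence
\[
0 \rightarrow V \rightarrow X_{\delta - 1} \rightarrow \ldots \rightarrow X_0 \rightarrow V' \rightarrow 0
\]
in which each $X_i$ is $\sharp$-filtered, $V'$ is a module with torsion, $hd_0(V) = hd_0(X_{\delta-1})$, and the generating degrees strictly decrease along the sequence, that is, $hd_0(X_{\delta-1}) > hd_0(X_{\delta-2}) > \ldots > hd_0(X_0) > hd_0(V')$. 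Since $V'$ has torsion it is nonzero, so $hd_0(V') \geq 0$. The displayed chain consists of $\delta$ strict inequalities between integers, starting at $hd_0(X_{\delta-1}) = hd_0(V)$ and ending at $hd_0(V') \geq 0$, so $hd_0(V) \geq hd_0(V') + \delta \geq \delta = \de(V)$, which is the desired bound.

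There is no real obstacle here once the Depth Classification Theorem is available: the entire content of the corollary is the counting step, and the only points one must check are that the sequence in the third part of that theorem genuinely records $\delta$ strict drops in generating degree and that its final term $V'$ is a nonzero (torsion) module — both of which are built directly into the statement. An alternative route, slightly less economical, would combine Lemma \ref{techlem} with the width estimates of Corollary \ref{betterboundwidth}, but invoking the classification theorem is the cleanest path.
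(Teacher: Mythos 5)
Your proof is correct and is exactly the argument the paper has in mind when it calls the corollary an ``immediate consequence'' of the Depth Classification Theorem: the case $\de(V)=0$ is trivial since nonzero modules have $hd_0 \geq 0$, and for $\de(V) = \delta > 0$ the chain $hd_0(V) = hd_0(X_{\delta-1}) > \cdots > hd_0(X_0) > hd_0(V') \geq 0$ has $\delta$ strict drops, giving $hd_0(V) \geq \delta$. The paper gives no further details, and your counting step correctly supplies them, including the observation that $V'$ is nonzero because it has torsion.
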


As promised earlier, the classification theorem implies the following strengthening of Lemma \ref{needtwo1}.\\

\begin{corollary}\label{needtwo}
Let
\[
0 \rightarrow V' \rightarrow V \rightarrow V'' \rightarrow 0
\]
be an exact sequence of $\FI_G$-modules which are generated in finite degree. Then any two of $V', V,$ or $V''$ are $\sharp$-filtered if and only if the third is as well.\\
\end{corollary}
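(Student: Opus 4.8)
The plan is to split into the three implications of ``any two $\Rightarrow$ the third'' for the sequence $0 \to V' \to V \to V'' \to 0$, and to observe that two of them are immediate from Lemma \ref{needtwo1}. That lemma says that once $V''$ is $\sharp$-filtered, $V'$ is $\sharp$-filtered if and only if $V$ is; this handles ``$V''$ and $V$ $\sharp$-filtered $\Rightarrow V'$ $\sharp$-filtered'' and ``$V''$ and $V'$ $\sharp$-filtered $\Rightarrow V$ $\sharp$-filtered'' at once. The only remaining implication is that if the submodule $V'$ and the total module $V$ are $\sharp$-filtered then so is the quotient $V''$ — exactly the strengthening advertised after Lemma \ref{needtwo1} — and this is where the depth classification theorem enters.

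For this last case the first step is to upgrade ``$V''$ generated in finite degree'' to ``$V''$ presented in finite degree.'' Since $V''$ is a quotient of $V$ it is generated in finite degree, so by Proposition \ref{naklemma} there is a surjection $\pi \colon M(W) \twoheadrightarrow V''$ from a relatively projective module, with $W = H_0(V'')$ and $H_0(\pi)$ an isomorphism; set $K = \ker\pi$. Applying $H_0$ to $0 \to K \to M(W) \to V'' \to 0$ and using that $H_0(\pi)$ is an isomorphism and $H_1(M(W)) = 0$ forces $H_0(K) \cong H_1(V'')$. Applying $H_0$ to the original sequence and using $H_1(V) = 0$ (Theorem \ref{whomacyclic}) gives an injection $H_1(V'') \hookrightarrow H_0(V')$. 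Since $V'$ is generated in finite degree, $H_0(V')$ has finite degree, hence so does the submodule $H_1(V'') \cong H_0(K)$; by Nakayama's lemma $K$ is then generated in finite degree, and the presentation $0 \to K \to M(W) \to V'' \to 0$ exhibits $V''$ as presented in finite degree.

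Now apply Lemma \ref{adepth} to $0 \to V' \to V \to V'' \to 0$, taking the $\sharp$-filtered module $V$ to be the $\sharp$-filtered middle term of the lemma: all three modules are presented in finite degree, so the lemma yields $\de(V') = \de(V'') + 1$. Since $V'$ is $\sharp$-filtered, the depth classification theorem gives $\de(V') = \infty$, whence $\de(V'') = \infty$; applying the depth classification theorem once more to $V''$ — which we have just shown is presented in finite degree — we conclude that $V''$ is $\sharp$-filtered, finishing the proof.

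All of this is routine homological bookkeeping together with two applications of the classification theorem. The one step that must not be skipped, and which I expect to be the only real technical point, is the verification in the second paragraph that $V''$ is presented in finite degree, since Lemma \ref{adepth} genuinely requires this of all three modules; note that the hypothesis that the submodule $V'$ (and not merely the ambient module $V$) is $\sharp$-filtered is used only at the very end, to guarantee $\de(V') = \infty$.
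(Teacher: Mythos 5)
Your proof is correct and follows essentially the same route as the paper: Lemma \ref{needtwo1} disposes of two of the three implications, and the remaining one ($V'$ and $V$ $\sharp$-filtered $\Rightarrow$ $V''$ $\sharp$-filtered) is obtained from Lemma \ref{adepth} together with the depth classification theorem. The one difference is that you explicitly verify that $V''$ is presented in finite degree before invoking Lemma \ref{adepth} and the classification theorem -- a hypothesis the paper's one-line proof leaves implicit -- and your verification of it is sound.
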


\begin{proof}
Lemma \ref{needtwo1} deals with all cases except that in which $V'$ and $V$ are $\sharp$-filtered. In this case Lemma \ref{adepth} implies that $V''$ must have infinite depth, and the classification theorem then implies that $V''$ is $\sharp$-filtered, as desired.\\
\end{proof}

This lemma is the last piece needed to prove Theorem \ref{homacyclic}.\\

\begin{proof}[Proof of Theorem \ref{homacyclic}]
The equivalence of the first four statements was proven in Theorem \ref{whomacyclic}. It is clear that any of these four implies the fifth statement. Conversely, if $V$ admits a finite resolution by $\sharp$-filtered modules, then we conclude that $V$ must be $\sharp$-filtered through repeated applications of Corollary \ref{needtwo}.\\

Assume now that $V$ is presented in finite degree. It is clear that the first five statements imply that $H_i(V) = 0$ for some $i \geq 0$. Conversely, fix $i > 0$ such that $H_i(V) = 0$. If $i = 1$, then the previously proven equivalences imply that $V$ is $\sharp$-filtered. If $i > 1$, we may truncate a free resolution of $V$ at the $i$-th step, and write,
\begin{eqnarray}
0 \rightarrow X_{i-1} \rightarrow M_{i-2} \rightarrow \ldots \rightarrow M_0 \rightarrow V \rightarrow 0 \label{exact3}
\end{eqnarray}
where the $M_j$ modules are free. Note that the proof of Proposition \ref{homreg} as well as Theorem \ref{boundwidth} imply that we may assume all $M_j$, as well as $X_{i-1}$, are generated in finite degree. Using the fact that $H_1(X_{i-1}) = H_i(V) = 0$, we conclude that $X_{i-1}$ is $\sharp$-filtered.\\
\end{proof}

\subsection{The Proofs of Theorems \ref{shiftreg} and \ref{fistab}} \label{regdep}

The classification theorem will now allow us to prove the connection between regularity and depth.\\

\begin{proof}[Proof of Theorem \ref{shiftreg}]
Lemma \ref{shiftderv} tells us that there are isomorphisms for all $a$ and $b$,
\[
\So_b H_1^{D^a}(V) = H_1^{D^a}(\So_b V).
\]
The depth classification theorem implies that $\So_b V$ is sharp filtered if and only if $\So_b H_1^{D^a}(V) = 0$ for all $a\geq 1$. This is equivalent to saying that $\deg(H_1^{D^a}(V)) < b$.\\
\end{proof}

At this point we are ready to put all the pieces together. This theorem tells us that the question of when a module becomes sharp filtered is equivalent to bounding its derived regularity. Proposition \ref{homreg} then implies that bounds on the derived regularity leads to bounds on the homological degrees of the module. Moreover, we notice that this theorem can be used in both directions. Known bounds on how far a module must be shifted to become sharp filtered give bounds on the regularity of $V$. Conversely, known bounds on the derived regularity of $V$, such as those given in Theorem \ref{boundwidth}, give bounds on how far a module must be shifted to become $\sharp$-filtered. In particular, we have the following.\\

\begin{corollary}\label{filbound}
Let $V$ be an $\FI_G$-module which presented in finite degree, and is generated in degree $\leq d$. Then $\So_b V$ is sharp filtered for $b \geq hd_1(V) + \min\{hd_1(V),d\}$.\
\end{corollary}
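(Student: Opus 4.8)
The plan is to chain together three earlier results: the characterization of $\sharp$-filtration of shifts in terms of derived regularity (Theorem \ref{shiftreg}), the bound relating derived regularity and derived width (Proposition \ref{finwidth}), and the explicit bound on derived width in terms of $hd_1(V)$ and $d$ (Corollary \ref{betterboundwidth}). No new combinatorics is needed; the work has all been done in the preceding sections, and this corollary is purely a matter of assembling the inequalities.

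Concretely, I would first invoke Corollary \ref{betterboundwidth}, which gives $\dwidth(V) \leq hd_1(V) + \min\{d, hd_1(V)\}$ since $V$ is presented in finite degree and generated in degree $\leq d$. Next, Proposition \ref{finwidth} yields $\dreg(V) + 1 \leq \dwidth(V)$, hence
\[
\dreg(V) \leq \dwidth(V) - 1 \leq hd_1(V) + \min\{hd_1(V), d\} - 1.
\]
Therefore, for any $b \geq hd_1(V) + \min\{hd_1(V), d\}$ we have $b > \dreg(V)$. Finally, Theorem \ref{shiftreg} states that for a module presented in finite degree, $\So_b V$ is $\sharp$-filtered precisely when $b > \dreg(V)$, so $\So_b V$ is $\sharp$-filtered for all such $b$, as claimed.

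There is essentially no obstacle here: the only point requiring a moment's care is that Corollary \ref{betterboundwidth} and Proposition \ref{finwidth} both require $V$ to be presented in finite degree, which is exactly our hypothesis, and that one should use the forward implication of Theorem \ref{shiftreg} (that $b > \dreg(V)$ suffices), not merely the "for $b \gg 0$" statement. One could alternatively cite Proposition \ref{homreg}'s intermediate inequality, but routing through Corollary \ref{betterboundwidth} directly is cleanest.
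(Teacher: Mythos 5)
Your proposal is correct and follows exactly the route the paper intends: the corollary is stated right after the paper remarks that Theorem \ref{shiftreg} converts bounds on derived regularity (via Theorem \ref{boundwidth}/Corollary \ref{betterboundwidth} and Proposition \ref{finwidth}) into bounds on how far one must shift. Your chaining of $\dreg(V) \leq \dwidth(V)-1 \leq hd_1(V)+\min\{hd_1(V),d\}-1$ and then invoking the precise "if and only if $b > \dreg(V)$" form of Theorem \ref{shiftreg} is exactly the paper's argument.
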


This gives us all we need to bound the stable range when $G$ is a finite group.\\

\begin{proof}[The Proof of Theorem \ref{fistab}]
We recall that $\sharp$-filtered modules have a dimension polynomial for all $n$ (see the discussion immediately following Definition \ref{sfildef}). Corollary \ref{filbound}, and the techniques of Remark \ref{reldegdef}, immediately imply Theorem \ref{fistab}.\\
\end{proof} 

\begin{remark}
In \cite[Theorem 1.5]{L}, Li proves that if $G$ is a finite group, and if $b$ is such that $hd_i(\So_b V) \leq hd_0(\So_b V) + i$ for all $i$, then $hd_i(V) \leq hd_0(V) + b + i$ for all $i$. Theorems \ref{homacyclic} and \ref{polystab}, along with this theorem of Li, imply Theorem \ref{finitereg}. Indeed, by the results in this paper, Li's result implies
\[
hd_i(V) \leq hd_0(V) + \dreg(V) + 1 + i.
\]
This is related to the bound discussed in Theorem \ref{finitereg}, and is better in certain cases.
\end{remark}

\subsection{An Alternative Definition of Depth} \label{classical}

In this section we explore an alternative definition for depth, which is more classically rooted. For convenience of exposition, we assume throughout this section that $k$ is a field of characteristic 0, and that $G$ is a finite group. It is the belief of the author that all that follows can be done in the generality of the rest of the paper. The main result of this section will show that this alternative definition is equivalent to that given above.\\

If $R$ is a commutative ring with ideal $I \subseteq R$, it is classically known that for any $R$-module $M$,
\[
\de(I,M) = \min\{i \mid \Ext^i(R/I,M) \neq 0\}. \label{dc}
\]
See \cite[Proposition 18.4]{E} for the proof of this equality. This is the definition of depth which we will now emulate for our setting.\\

In \cite{GL}, the Gan and Li define the ring $k\FI_G = \bigoplus_{n \leq m} k[\Hom_{\FI_G}([n],[m])]$, whose multiplication is defined by
\[
(f,g)\cdot (f',g') = \begin{cases} (f,g) \circ (f',g') & \text{ if the codomain of $f'$ is equal to the domain of $f$}\\ 0 & \text{ otherwise.}\end{cases}
\]
Writing $e_n$ for the identity morphism in $\Hom_{\FI_G}([n],[n])$, we say that a $k\FI_G$-module $V$ is graded if $V = \bigoplus_n e_n \cdot V$. In the paper \cite{GL}, Gan and Li show that the category of graded $k\FI_G$-modules is equivalent to the category of $\FI_G$-modules.\\

One observes that the ring $k\FI_G$ has a natural two sided ideal generated by the upwards facing maps, $\mr = ((f,g):[n] \rightarrow [m] \mid n < m)$. The quotient $\G = k\FI_G/\mr$ is the direct sum $\bigoplus_n k[G_n]$ where all upwards facing maps act trivially.\\

\begin{definition}
We write $\G$ to denote the $\FI_G$-module which is defined on points by $\G_n = k[G_n]$, and whose transition maps are trivial. For any $\FI_G$-module $V$, The $k$-module $\Hom_{\FI_G\text{-Mod}}(\G,V)$ carries the structure of an $\FB_G$-module via
\[
\Hom_{\FI_G\text{-Mod}}(\G,V)_n = \Hom_{\FI_G\text{-Mod}}(k[G_n],V)
\]
We write $\Hom(\G,\dt):\FI_G\Mod \rightarrow \FB_G\Mod$ to denote this functor.\\
\end{definition}

\begin{proposition}\label{homprop}
The functor $\Hom(\G,\dt)$ enjoys the following properties:
\begin{enumerate}
\item $\Hom(\G,V) \neq 0$ if and only if $V$ has torsion;
\item $\Hom(\G,\dt)$ takes finitely generated $\FI_G$-modules to finitely generated $\FB_G$-modules;
\item $\Hom(\G,\dt)$ is right adjoint to the inclusion functor $\FB_G$-mod $\rightarrow \FI_G$-mod. In particular, $\Hom(\G,\dt)$ is left exact, and maps injective objects of $\FI_G$-mod to injective objects of $\FB_G$-mod.\\
\end{enumerate}
\end{proposition}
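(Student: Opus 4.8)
The plan is to prove the three items in order, since the later ones lean on the structure exposed in the first. For item (1), I would unwind the definition of $\G$: a morphism $\G \to V$ of $\FI_G$-modules is determined by its components $k[G_n] \to V_n$, and compatibility with the transition maps of $\G$ (which are trivial) forces each such component to land in the submodule of $V_n$ killed by all the transition maps out of lower degree — that is, in the submodule of torsion elements sitting in degree $n$ (recall from the discussion after the definition of torsion that an element is torsion iff it is killed by some $\iota$, equivalently killed by some transition map, using transitivity of the $G_n$-action on $\Hom_{\FI_G}([m],[n])$). Since $\G_n = k[G_n]$ is free as a $G_n$-module, giving a $G_n$-equivariant map $k[G_n] \to V_n$ with image in the torsion is the same as choosing an arbitrary torsion element of $V_n$. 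Hence $\Hom(\G,V) = 0$ iff $V$ has no torsion in any degree, which is exactly the statement. (One should double-check the direction of the transition maps in $\G$ so that "trivial" is used consistently with Definition of $\G$; this is the only place a sign-of-the-arrows slip could creep in.)

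For item (3), I would directly verify the adjunction $\Hom_{\FI_G}(W, V) \cong \Hom_{\FB_G}(W, \Hom(\G,V))$ for $W$ an $\FB_G$-module and $V$ an $\FI_G$-module, where on the left $W$ is regarded as an $\FI_G$-module with trivial transition maps. The point is that an $\FI_G$-map out of a trivial-transition-map module $W$ must, degree by degree, send $W_n$ into the part of $V_n$ annihilated by transition maps, i.e. into $\Hom(\G,V)_n$ via the identification above; conversely any such collection of $G_n$-maps assembles to an $\FI_G$-map. Naturality in $W$ and $V$ is routine. Formal nonsense then gives that $\Hom(\G,\dt)$, being a right adjoint, is left exact and preserves injectives. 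I would note that the inclusion $\FB_G\text{-mod} \to \FI_G\text{-mod}$ is exact (it is just "view a sequence of $G_n$-modules as an $\FI_G$-module with trivial transition maps", and kernels/cokernels are computed pointwise), which is what lets the right adjoint preserve injectives.

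For item (2), finite generation of $\FI_G$-modules is by hypothesis in force over a Noetherian-type situation (here $k$ a field of characteristic $0$, $G$ finite), so one can invoke the Noetherian property: if $V$ is finitely generated, its torsion submodule $T(V)$ is a submodule of a finitely generated module and hence finitely generated, so $T(V)$ has finite degree (a finitely generated torsion module is supported in finitely many degrees — this follows since each generator is killed by some $\iota_b$, and finitely many generators give a uniform $b$). Now $\Hom(\G, V) = \Hom(\G, T(V))$ by item (1), and for a module of finite degree $\Hom(\G,\dt)$ lands in an $\FB_G$-module supported in finitely many degrees, each degree a finitely generated $k[G_n]$-module (here finite-dimensionality over the field $k$ makes this immediate); such an $\FB_G$-module is finitely generated. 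The main obstacle, and the step needing the most care, is the first item: pinning down exactly which submodule of $V_n$ receives the $n$-th component of a map from $\G$, and matching it cleanly with the notion of torsion element from the earlier section — everything else is either an adjunction bookkeeping exercise or a direct appeal to Noetherianity and the finite-degree observation.
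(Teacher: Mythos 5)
Your overall architecture matches the paper's: item (1) by unwinding the definition of $\G$, item (2) by Noetherianity, item (3) by exhibiting the adjunction directly. Items (2) and (3) are fine. (For item (2) the paper is a bit more direct: it identifies $\Hom(\G,V)$ as a submodule of $V$ — via evaluation at the identity of $k[G_n]$ — and then applies Noetherianity to that submodule, without your intermediate reduction through $T(V)$ and the finite-degree observation; your version is longer but correct.)

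There is, however, a genuine gap in item (1), precisely at the step you flagged as the delicate one. You claim that a $G_n$-equivariant map $k[G_n]\to V_n$ arising as the $n$-th component of a map $\G\to V$ is the same as a choice of \emph{torsion} element of $V_n$. That identification is wrong. Because the transition maps of $\G$ are zero, commutativity forces the image of the $n$-th component to lie in the common kernel of \emph{all} transition maps out of $V_n$; by transitivity of the $G_{n+1}$-action on $\Hom_{\FI_G}([n],[n+1])$ and by factoring long transition maps through short ones, this common kernel is exactly $\ker(\iota_1\colon V_n\to V_{n+1})$. A torsion element of $V_n$, by contrast, need only die under some $\iota_b$, and may well survive $\iota_1$. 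Concretely, take $G=1$ and let $V$ be the $\FI$-module with $V_0 = V_1 = k$, $V_0\to V_1$ the identity, and $V_n = 0$ for $n\ge 2$: then $1\in V_0$ is torsion (killed by $\iota_2$) but $\Hom(\G,V)_0 = \ker\iota_1 = 0$. So $\Hom(\G,V)_n$ is in general strictly smaller than the torsion in degree $n$, and the ``iff'' in item (1) does not follow from your identification. The direction $\Hom(\G,V)\neq 0\Rightarrow V$ has torsion is fine, but for the converse you need the paper's pushing-forward argument: given a torsion element $v$, push it along transition maps one degree at a time until the last nonzero image; that last image is killed by the next degree-one transition map, hence by all transition maps, and so produces a nonzero element of $\Hom(\G,V)$ in that degree.
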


\begin{proof}
For the first statement, a map $\Hom_{\FI_G\text{-mod}}(k[G_n],V)$ is just a map of $k[G_n]$-modules $f:k[G_n] \rightarrow V_n$, whose image is in the kernel of every transition map out of $V_n$. Therefore if $\Hom(\G,V) \neq 0$, $V$ must have torsion. Conversely, if $v \in V_n$ is a torsion element, then there is some transition map $(f,g)_\as:V_n \rightarrow V_m$ such that $(f,g)_\as(v) = 0$. We may write the map $(f,g)$ as 
\[
(f,g) = (f_1,g_1) \circ (f_2,g_2)
\]
where $(f_1,g_1):[m-1] \rightarrow [m]$ and $(f_2,g_2):[n] \rightarrow [m-1]$. If $(f_2,g_2)_\as(v) = 0$, then we may repeat this decomposition until it is not. Assuming that this is not the case, then $(f_2,g_2)_\as(v) \in V_{m-1}$ is in the kernel of all transition facing maps. In particular, $\Hom(\G,V)_{m-1} \neq 0$.\\

Any element of $\Hom(\G,V)_n$ will naturally correspond to a torsion element of $V_n$ by the previous discussion. In particular, we may consider $\Hom(\G,V)$ as a submodule of $V$. The Noetherian Property implies that $\Hom(\G,V)$ is finitely generated.\\

Let $W$ be a finitely generated $\FB_G$-module, and let $V$ be a finitely generated $\FI_G$-module. If we consider $W$ as being an $\FI_G$-module with trivial transition maps, then a morphism $W \rightarrow V$ must send elements of $W$ to elements of $V$ which are in the kernel of all transition maps. By the previous discussion, these correspond precisely to the elements of $\Hom(\G,V)$. Conversely, if we have a map $\phi:W \rightarrow \Hom(\G,V)$, then we set $\widetilde{\phi}:W \rightarrow V$ by $\widetilde{\phi}_n(w) = \phi_n(w)(id)$.\\

The final two statements are general facts from homological algebra. Any functor which is right adjoint has the first property, and if its left adjoint is exact then it has the second property.\\
\end{proof}

We have already discussed the fact that the category $\FI_G$-mod might not have sufficiently many injective objects if $k$ is a field of characteristic $p > 0$. When $k$ is a field of characteristic 0, and $G$ is a finite group, however, this is not an issue. In fact, Sam and Snowden \cite[Theorem 4.3.1]{SS3}, as well as Gan and Li \cite[Theorem 1.7]{GL}, have shown that every object has finite injective dimension in this case. Moreover, the cited papers prove that all projective objects are also injective, and these are precisely the torsion free injective objects. In other words, one has the following chain of equalities
\[
\{\text{$\sharp$-Filtered Modules}\} = \{\text{Projective Modules}\} =\{\text{Torsion Free Injective Modules}\}
\]
\text{}\\

\begin{definition}
We will denote the right derived functors of $\Hom(\G,\dt)$ by $\Ext^i(\G,\dt)$. Given a finitely generated $\FI_G$-module $V$, we define its \textbf{classical depth} to be the quantity $\dec(V) := \min\{i \mid \Ext^i(\G,\dt) \neq 0\}$. \\
\end{definition}

As was the case with the other notion of depth, the first property one needs to prove to justify using the name depth is the increment property.\\

\begin{lemma}\label{adepth2}
If there is an exact sequence of finitely generated $\FI_G$-modules
\[
0 \rightarrow V' \rightarrow X \rightarrow V \rightarrow 0
\]
such that $X$ is $\sharp$-filtered, then $\dec(V') = \dec(V) + 1$.\\
\end{lemma}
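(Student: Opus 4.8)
The plan is to run exactly the argument used to prove Lemma~\ref{adepth}, with the acyclicity of $\sharp$-filtered modules with respect to the derivative functors replaced by acyclicity with respect to $\Hom(\G,\dt)$. Concretely, the engine of the proof will be the claim that if $X$ is a finitely generated $\sharp$-filtered $\FI_G$-module (under the standing hypotheses of this section, so $k$ is a field of characteristic $0$ and $G$ is finite), then $\Ext^i(\G,X)=0$ for every $i\geq 0$.

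To establish this claim I would argue as follows. For $i=0$, the chain of equalities recalled just before the definition of classical depth identifies $X$ with a torsion free injective object; being torsion free, $\Hom(\G,X)=0$ by part~1 of Proposition~\ref{homprop}. For $i\geq 1$, the vanishing is immediate from the fact that $X$ is injective, since $\Ext^i(\G,\dt)$ is computed from an injective resolution of its argument.

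With the claim in hand the rest is formal. Applying $\Hom(\G,\dt)$ to $0\rightarrow V'\rightarrow X\rightarrow V\rightarrow 0$ and using that every term $\Ext^i(\G,X)$ vanishes, the long exact sequence collapses to $\Hom(\G,V')=0$ together with isomorphisms $\Ext^{j}(\G,V')\cong\Ext^{j-1}(\G,V)$ for all $j\geq 1$. Reading off the least index in which either side is nonzero gives $\dec(V')=\dec(V)+1$; the convention that $\infty+1=\infty$ covers the case in which $V$ (hence also $V'$) has all of its groups $\Ext^i(\G,\dt)$ equal to zero.

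I expect the only delicate point to be the justification of the key claim, which rests on the (quoted) theorem of Sam--Snowden and Gan--Li that in this setting $\sharp$-filtered modules are precisely the torsion free injective objects. Should one prefer to avoid invoking that input, one could instead reduce via Proposition~\ref{projclass} to the case $X=M(W)$ for a single $k[G_m]$-module $W$: torsion freeness of $M(W)$ (visible from the identification in the proof of Lemma~\ref{inddeg} of the inclusion $M(W)\hookrightarrow\So M(W)$ with $\iota$) handles $i=0$, and one would then need a separate argument, for instance via the adjunction in part~3 of Proposition~\ref{homprop}, for the higher vanishing.
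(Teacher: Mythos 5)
Your proposal is correct and follows essentially the same route as the paper: the paper's proof likewise applies $\Hom(\G,\dt)$ to the sequence, uses that $X$ is injective and torsion free to kill all $\Ext^i(\G,X)$, and reads off $\Ext^{i+1}(\G,V')\cong\Ext^i(\G,V)$ for $i\geq 0$. Your write-up just makes the vanishing claim and the degree shift explicit.
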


\begin{proof}
Applying the functor $\Hom(\G,\dt)$ to the above sequence, and using the fact that $X$ is injective and torsion free, we find that $\Ext^{i+1}(\G,V') = \Ext^i(\G,V)$ for all $i \geq 0$. This immediately implies the desired result.\\
\end{proof}

The first step in showing that the two notions of depth coincide is to show that they are the same at the extremes. Proposition \ref{homprop} implies that depth zero and classical depth zero are equivalent. We next prove that objects of infinite classical depth are precisely the $\sharp$-filtered modules.\\

\begin{proposition}
For any finitely generated $\FI_G$-module $V$, $\dec(V) = \infty$ if and only if $V$ is $\sharp$-filtered.\\
\end{proposition}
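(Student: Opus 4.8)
The proof will treat the two implications separately, each reducing to facts already established. For the direction that a $\sharp$-filtered module has infinite classical depth, I would use the characterization recorded just above (valid here since $k$ has characteristic $0$ and $G$ is finite) that the $\sharp$-filtered modules are exactly the torsion free injective objects. Then $\Hom(\G,V) = 0$ by Proposition~\ref{homprop}(1) because $V$ is torsion free, while $\Ext^i(\G,V) = 0$ for all $i \geq 1$ because $V$ is injective; hence $\Ext^i(\G,V) = 0$ for every $i \geq 0$, i.e.\ $\dec(V) = \infty$.

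For the converse I would argue by induction on the generating degree of $V$. The zero module is vacuously $\sharp$-filtered, so suppose $V \neq 0$ and $\dec(V) = \infty$. In particular $\dec(V) \neq 0$, so Proposition~\ref{homprop}(1) shows that $V$ is torsion free, whence the natural map $\iota_b : V \to \So_b V$ is injective for all $b$. Since $k$ is a field and $G$ is finite, $V$ is presented in finite degree by the Noetherian property, so Theorem~\ref{shiftreg} (equivalently Theorem~\ref{polystab}) provides a $b$ with $\So_b V$ $\sharp$-filtered. Form the short exact sequence
\[
0 \rightarrow V \rightarrow \So_b V \rightarrow Q \rightarrow 0 .
\]
Here $Q$ is finitely generated, being a quotient of the finitely generated module $\So_b V$ (note $\So_b$ preserves finite generation because $G$ is finite), and $Q$ is generated in strictly smaller degree than $V$ by Lemma~\ref{inddeg}. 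Since $\So_b V$ is $\sharp$-filtered, Lemma~\ref{adepth2} gives $\dec(Q) = \dec(V) - 1 = \infty$, so the inductive hypothesis applies and $Q$ is $\sharp$-filtered. Now $\So_b V$ and its quotient $Q$ are both $\sharp$-filtered, so Lemma~\ref{needtwo1}, applied to the displayed sequence, forces the submodule $V$ to be $\sharp$-filtered, which completes the induction.

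Once the main inputs are assembled the argument is largely bookkeeping; the one point needing care is that the quantity we induct on genuinely drops while $Q$ stays finitely generated, which is exactly where finiteness of $G$ and Lemma~\ref{inddeg} enter. There is no circularity: Theorems~\ref{shiftreg} and \ref{polystab}, the increment property (Lemma~\ref{adepth2}), and the two-out-of-three principle for $\sharp$-filtered modules (Lemma~\ref{needtwo1}) are all in place before this statement, and the characterization of $\sharp$-filtered modules as torsion free injectives is the one genuinely characteristic-zero ingredient.
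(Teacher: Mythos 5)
Your proof is correct and follows essentially the same route as the paper: torsion-free injectivity for the forward direction, and for the converse an embedding $V \hookrightarrow \So_b V$ into a $\sharp$-filtered shift, with Lemma \ref{inddeg} driving an induction on generating degree and the increment property (Lemma \ref{adepth2}) passing infinite classical depth to the cokernel. The only cosmetic difference is that you close the induction with Lemma \ref{needtwo1} applied to the quotient $Q$ directly, where the paper iterates the construction and invokes Corollary \ref{needtwo} at the end; both are valid.
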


\begin{proof}
Because $\dec(V)$ is defined in terms of the vanishing of $\Ext$ functors, it follows that $\dec(V) = \infty$ if $V$ is injective and torsion free. Conversely assume that $V$ has infinite classical depth. This implies, in particular, that $V$ is torsion free, and therefore embeds into all its shifts. Using Theorem \ref{polystab}, we may find some $b >> 0$ such that $\So_b$ is $\sharp$-filtered. Lemma \ref{inddeg} implies the cokernel of $V \hookrightarrow \So_b V$ is generated in strictly smaller degree, while Lemma \ref{adepth2} implies that it has infinite classical depth. Proceeding inductively, we will eventually be left with a module $V'$ such that $\So_{b'}V'$ is sharp filtered for some $b' >> 0$ and $\So_{b'}V'/ V'$ is $\sharp$-filtered. Corollary \ref{needtwo} implies that $V$ was $\sharp$-filtered to begin with.\\
\end{proof}

This is all we need to prove the equivalence.\\

\begin{theorem}
Let $V$ be a finitely generated $\FI_G$-module. Then $\dec(V) = \de(V)$.\\
\end{theorem}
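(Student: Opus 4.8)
The plan is to reduce everything to the two extreme cases — torsion modules and $\sharp$-filtered modules — where the two notions of depth are already known to coincide, and then to propagate the equality through short exact sequences with $\sharp$-filtered middle term, using the two increment lemmas, Lemma \ref{adepth} and Lemma \ref{adepth2}.

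First I would dispose of the extremes. By the first part of the depth classification theorem, $\de(V) = 0$ precisely when $V$ has torsion; by the first part of Proposition \ref{homprop}, $\Ext^0(\G,V) = \Hom(\G,V) \neq 0$ precisely when $V$ has torsion, i.e. exactly when $\dec(V) = 0$. Hence $\de(V) = 0$ if and only if $\dec(V) = 0$. Likewise, the second part of the depth classification theorem characterizes the modules with $\de(V) = \infty$ as the $\sharp$-filtered ones, and the proposition immediately preceding the definition of classical depth does the same for $\dec(V) = \infty$; so $\de(V) = \infty$ if and only if $\dec(V) = \infty$.

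It remains to treat the case $0 < \de(V) = \delta < \infty$, which I would handle by induction on $\delta$. Since $\de(V) > 0$, the module $V$ is torsion free, so the natural map $V \to \So_b V$ is injective for every $b$. By Theorem \ref{polystab} (equivalently, Theorem \ref{shiftreg}) we may choose $b \gg 0$ so that $\So_b V$ is $\sharp$-filtered; as $G$ is finite, $\So_b V$ — and hence its quotient $Q := \coker(V \hookrightarrow \So_b V)$ — is finitely generated, and by the Noetherian property all three terms of
\[
0 \rightarrow V \rightarrow \So_b V \rightarrow Q \rightarrow 0
\]
are presented in finite degree. Lemma \ref{adepth} gives $\de(V) = \de(Q) + 1$, so $\de(Q) = \delta - 1$, while Lemma \ref{adepth2} gives $\dec(V) = \dec(Q) + 1$. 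If $\delta = 1$ then $\de(Q) = 0$, so $Q$ has torsion and $\dec(Q) = 0$ by the first step, whence $\dec(V) = 1$; if $\delta > 1$, the inductive hypothesis applied to $Q$, which has strictly smaller finite depth, yields $\dec(Q) = \de(Q) = \delta - 1$, and therefore $\dec(V) = \delta$. This completes the induction.

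There is no genuine obstacle here: all of the substance is already packaged in the two increment lemmas and in the agreement of the two depths at $0$ and at $\infty$. The only points needing a little care are the bookkeeping on finite generation — automatic because $G$ is finite and $k$ is a field, so that $\So_b$ and passage to quotients preserve finite generation and the Noetherian property applies — and the well-foundedness of the induction, which holds since each step strictly decreases the finite value of $\de$. One could equally run the argument using the explicit exact sequence $0 \to V \to X_{\delta-1} \to \cdots \to X_0 \to V' \to 0$ provided by the third part of the depth classification theorem, splitting it into short exact sequences with $\sharp$-filtered middle terms and applying Lemma \ref{adepth2} repeatedly starting from $\dec(V') = 0$.
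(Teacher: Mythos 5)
Your proof is correct and uses essentially the same mechanism as the paper's: dispose of the extremes $0$ and $\infty$ via the depth classification theorem and Proposition \ref{homprop}, then propagate along the shift-embedding $0 \to V \to \So_b V \to Q \to 0$ using Lemmas \ref{adepth} and \ref{adepth2}. The only difference is that the paper runs the finite positive case as a proof by contradiction under the assumption $\de(V) < \dec(V)$, whereas you give a direct induction on $\de(V)$; your version is marginally cleaner since the paper's contradiction argument only explicitly treats one direction of the inequality.
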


\begin{proof}
If $\dec(V) = 0$ or $\dec(V) = \infty$, then we have already seen $\de(V)$ agrees with $\dec(V)$. Assume that $\dec(V) = \delta > 0$, and assume for the sake of contradiction that $\de(V) = \delta' < \delta$. Because $\delta$ is positive, it must be the case that $\delta'$ is positive as well. We may therefore find some $b >> 0$ such that $\So_b V$ is $\sharp$-filtered. This gives us an embedding
\[
0 \rightarrow V \rightarrow \So_b V \rightarrow Q \rightarrow 0
\]
for some $Q$ with $\dec(Q) = \delta - 1$, and $\de(Q) = \delta' - 1$. Proceeding inductively, we would eventually we left with a module with $\de(V') = 0$ and $\dec(V) > 0$. This is a contradiction.\\
\end{proof}

\begin{remark}
In \cite{SS3}, Sam and Snowden provide a definition of the depth of an $\FI$-module in characteristic 0. This definition is formulated in terms of a kind of Auslander-Buchsbaum formula. It can be shown that this definition of depth is also equivalent to the definitions given in this paper.\\
\end{remark}


\begin{thebibliography}{aaaa}
\small
\bibitem[CE]{CE} T. Church and J.\,S. Ellenberg, Homological properties of FI-modules and stability, \arXiv{1506.01022}.
\bibitem[CaE]{CaE} F. Calegari and M. Emerton, \textit{Homological Stability for Completed Homology}, Mathematische Annalen, (2015), 1-17.
\bibitem[CF]{CF} T. Church and B. Farb, \textit{Representation theory and homological stability}, Advances in Mathematics, (2013), 250-314.
\bibitem[CEF]{CEF} T. Church, J.\,S. Ellenberg and B. Farb, \textit{$\FI$-modules and stability for representations of symmetric groups}, Duke Math. J. 164, no. 9 (2015), 1833-1910.
\bibitem[CEFN]{CEFN} T. Church, J.\,S. Ellenberg, B. Farb, and R. Nagpal, \textit{$\FI$-modules over Noetherian rings}, Geom. Topol. 18 (2014) 2951-2984.
\bibitem[D]{D} A. Djament, \textit{Des propriétés de finitude des foncteurs polynomiaux}, \arXiv{1308.4698}.
\bibitem[E]{E} D. Eisenbud, \textit{Commutative Algebra with a View Towards Algebraic Geometry}, Graduate Texts in Mathematics \# 150, Springer-Verlag, New York, 1995.
\bibitem[E2]{E2} D. Eisenbud, \textit{The Geometry of Syzygies: A Second Course in Commutative Algebra and Algebraic Geometry}, Graduate Texts in Mathematics \# 229, Springer-Verlag New York, 2005.
\bibitem[GL]{GL} W. Liang Gan, and L. Li, \textit{Coinduction functor in representation stability theory}, J. London Math. Soc. (2015) 92 (3), 689-711.
\bibitem[GL2]{GL2} W. Liang Gan, and L. Li, \textit{A remark on $\FI$-module homology}, \arXiv{1505.01777}.
\bibitem[JW]{JW} J. C. H. Wilson, \textit{$\FI_W$-modules and stability criteria for representations of the classical Weyl groups}, Journal of Algebra, Volume 420, 15 December 2014, Pages 269-332.
\bibitem[L]{L} L. Li, \textit{Homological Degrees of Representations of Categories with Shift Functors}, \arXiv{1507.08023}.
\bibitem[LY]{LY} L. Li, and N. Yu, \textit{Filtrations and Homological Degrees of $\FI$-modules}, \arXiv{1511.02977}.
\bibitem[N]{N} R. Nagpal, \textit{$\FI$-modules and the cohomology of modular representations of symmetric groups},\arXiv{1505.04294}.
\bibitem[NS]{NS} R. Nagpal, and A. Snowden, \textit{Periodicity in the cohomology of symmetric groups via divided powers}, in preparation.
\bibitem[PS]{PS} A. Putman, and S. Sam, \textit{Representation stability and finite linear groups}, \arXiv{1408.3694}.
\bibitem[S]{S} A. Snowden, \textit{Syzygies of Segre embeddings and ∆-modules}, Duke Math. J. 162 (2013), no. 2, 225–277, Duke Math. J.  Volume 162, no. 2 (2013), 225-277.
\bibitem[SS]{SS} S. Sam, and A Snowden, \textit{Gr\"obner methods for representations of combinatorial categories}, \arXiv{1409.1670}.
\bibitem[SS2]{SS2} S. Sam and A. Snowden, \textit{Representations of categories of $G$-maps}, \arXiv{1410.6054}.
\bibitem[SS3]{SS3} S. Sam and A. Snowden, \textit{$GL$-equivariant modules over polynomial rings in in ﬁnitely many variables}, Trans. Amer. Math. Soc. 368 (2016), 1097–1158
\end{thebibliography}
\end{document}